\documentclass[10pt,twoside,reqno]{amsart}
\usepackage{amsmath,amsfonts,amssymb,amsthm}
\usepackage{enumerate}
\usepackage[utf8]{inputenc}
\usepackage{hyperref}
\usepackage[usenames]{color}
\usepackage[english]{babel}
\usepackage{graphicx}
\usepackage{tikz-cd}
\usepackage{epstopdf}
\usepackage{subcaption}
\usepackage[margin=2.5cm]{geometry}

\newtheorem{thm}{Theorem}[section]

\newtheorem{definition}[thm]{Definition}
\newtheorem{prop}[thm]{Proposition}
\newtheorem{proposition}[thm]{Proposition}

\newtheorem{corollary}[thm]{Corollary}

\newtheorem{lemma}[thm]{Lemma}

\newtheorem*{thmmain}{Main Theorem}

\theoremstyle{remark}

\theoremstyle{remark}
\newtheorem{remark}[thm]{Remark}

\newtheorem{example}[thm]{Example}

\newcommand{\R}{\mathbb{R}}
\newcommand{\Q}{\mathbb{Q}}
\newcommand{\N}{\mathbb{N}}

\newcommand{\C}{\mathbb{C}}

\newcommand{\OO}{\mathcal{O}}

\newcommand{\DD}{\mathcal{D}}

\newcommand{\um}{{\underline m}}
\newcommand{\un}{{\underline n}}
\newcommand{\supp}{\operatorname{supp}}

\newcommand{\x}{\mathbf{x}}
\newcommand{\xx}{\frac{\partial}{\partial x}}
\newcommand{\yy}{\frac{\partial}{\partial y}}
\newcommand{\zz}{\frac{\partial}{\partial z}}

\newcommand{\ad}{\operatorname{ad}}

\newcommand{\ku}{^{(k)}}

\newcommand{\nobracket}{}

\newcommand{\tmem}[1]{{\em #1\/}}
\newcommand{\tmmathbf}[1]{{\boldsymbol{#1}}}
\newcommand{\tmop}[1]{{\operatorname{#1}}}
\newcommand{\tmtextbf}[1]{\text{{\bfseries{#1}}}}

\newenvironment{enumeratealpha}{\begin{enumerate}[a{\textup{)}}] }{\end{enumerate}}
\newenvironment{enumeratenumeric}{\begin{enumerate}[1.] }{\end{enumerate}}
\newenvironment{itemizedot}{\begin{itemize} }{\end{itemize}}

\newcommand{\Gr}{\operatorname{Gr}}
\newcommand{\ord}{\mathrm{ord}}

\newcommand{\tO}{\widetilde{\mathcal{O}}}
\newcommand{\oO}{\mathcal{O}}
\newcommand{\cD}{\mathcal{D}}
\newcommand{\tcD}{\widetilde{\cD}}
\newcommand{\vO}{\mathcal{O}}

\usepackage[foot]{amsaddr}

\title{Bruno ideal and the variety of centers for singular germs of vector fields}
\author{María Martín-Vega}
\address[1]{Université Paris Cité, Sorbonne Université, CNRS, IMJ-PRG, F-75013 Paris, France.}
\email[1]{martinvega@imj-prg.fr}
\author{Daniel Panazzolo}
\address[2]{ D\' epartement de Math\' ematiques -- IRIMAS--UHA,
 18 Rue des Fr\`eres Lumi\`ere, 68093 Mulhouse, France. }
\email[2]{daniel.panazzolo@uha.fr}
\date{}

\begin{document}

\maketitle

\begin{abstract}
    Given a germ of singular analytic vector field $\partial$, we consider the locus where $\partial$ is formally equivalent to its linear part. In the logarithmic case, this locus is defined by the vanishing of a formal ideal $B(\partial)$.  We show that, under convenient arithmetic conditions on the linear part of $\partial$, such ideal $B(\partial)$ is in fact analytic. As a consequence, its zero set $V$ is an analytic invariant variety and the foliation defined by the restriction $\partial|_V$ is analytically linearizable. 
\end{abstract}
{\small{\sc Keywords}: Bruno ideal, Poincaré-Dulac normal form, linearizability, variety of centers.}

\tableofcontents

\section{Introduction}

Based on ideas of A.D. Bruno, the aim of this paper is to study the existence of analytic invariant sets related to the convergence of the Poincaré-Dulac normalization for germs of singular analytic vector fields. 

Let us start by briefly recalling some basic concepts and results of normal forms. The detailed definitions will be given in the next subsection.
A germ of singular analytic vector field $\partial$ is called a {\em Poincaré-Dulac normal form} (or simply {\em normal form}) if we can decompose it in the form
$$
\partial = S + R
$$ 
where:
\begin{itemize}
\setlength{\itemsep}{0.4em}
\item[(1)] $S= \sum_{i=1}^n \lambda_ix_i \frac{\partial}{\partial x_i}$ is linear diagonal vector field
\item[(2)] $R$ is a non-linear vector field with a nilpotent linear part.
\item[(3)] $S$ and $R$ commute, i.e.~the Lie bracket $[S,R]$ vanishes. 
\end{itemize}
As remarked by Martinet in \cite{Martinet1981Norm}, these properties are equivalent to say that $S$ and $R$ are respectively the semi-simple and nilpotent components in the {\em Jordan decomposition} of $\partial$ (see subsection \ref{subsect-jordan-decomp}).  This definition extends to the case where $\partial$ is a singular {\em formal} vector field, i.e.~to a vector field of the form $\partial = \sum f_i \frac{\partial}{\partial x_i}$ where each component $f_i$ is a formal series without constant term.

Let us express the condition $[S,R]=0$ more explicitly. Using the logarithmic basis $\{  x_i \frac{\partial}{\partial x_i}  \}_{i=1}^n$ for vector fields, we can expand 
$R$ in the form 
$$\sum_{i=1}^n \sum_{\um\in M_i } a_{\um}\x ^{\um}  x_i \frac{\partial}{\partial x_i} \, ,$$ where $M_i$ is a subset of $\mathbb{Z}_{\ge 0}^{i-1}\times \mathbb{Z}_{\ge -1} \times \mathbb{Z}_{\ge 0}^{n-i-1}$.
The condition $[S,R]=0$ requires that the non-zero terms in the above expansion of $R$ are such that
$S(\x^{\um})=0$, where $S$ acts on $\mathbb{C}[[x]]$ as a derivation. In other words, that $\langle \lambda, \um\rangle=\sum \lambda_i m_i$ vanishes. These terms are called \emph{resonant} and the relations $\langle \lambda, \um\rangle=0$ are called \emph{resonances}. We say a resonance is \emph{negative} if one of the components $m_{i}$ of $\um$ is equal to $-1$, and \emph{positive} otherwise.  

If there is no monomial $\um\in M_1\cup\cdots\cup M_n$ satisfying
\[
\langle\lambda,\um\rangle=0,
\]
we say that $S$ is \emph{non-resonant}. Since $\lambda_1,\ldots,\lambda_n$ are intrinsically determined as the eigenvalues of the linear part of $\partial=S+R$ at the origin, we also say that $\partial$ is \emph{non-resonant}, or that $\partial$ has no resonances.

From the classical results by H.~Poincaré~\cite{Poincare1881} and H.~Dulac~\cite{Dulac1903}, it is known that every singular analytic vector field $\partial$ is formally conjugate to a normal form. In other words, there exists a formal change of coordinates, which we identify to an automorphism $\Phi$ of the ring of formal series, such that 
$$
\Phi \partial \Phi^{-1} = S + R
$$
where $S,R$ are {\em formal} vector fields satisfying the above conditions (1),(2) and (3). In this case, we say that $\Phi \partial \Phi^{-1}$ is a {\em formal normal form} of $\partial$ and that the conjugating automorphism defines a formal {\em normalization}.  

We emphasize that a normal form is not uniquely determined by $\partial$. For this reason, some authors prefer to refer to it as a {\em pre-normal form}. More precisely, although the diagonal vector field $S$ is intrinsically determined by the eigenvalues $\partial$ at the origin, the nilpotent part $R$ is highly non-unique. Indeed, if $\Psi$ is any formal automorphism preserving the diagonal part, that is, satisfying
$$\Psi S \Psi^{-1}=S,$$
then both $S+R$ and $S+\Psi R\Psi^{-1}$ are formal normal forms of the same vector field. The problem of determining {\em unique} normal forms, namely, of explicitly selecting a canonical representative in each formal conjugacy class, remains open.

The analogous problem of obtaining an {\em analytic} normalization presents two main difficulties. Either the normal form itself is not analytic—reflecting the fact that the formal series expansion of $R$ given above has zero radius of convergence—or the conjugating automorphism $\Phi$ fails to be analytic.

Let us now discuss a remarkable result on analytic normalisation proved by Bruno in \cite{Bruno1971_1972}.   
We start by introducing the following notation. To a vector of complex numbers $\lambda = (\lambda_1,\ldots,\lambda_n)\in \C^n$, 
we associate the sequence $\omega_k = \omega_k(\lambda)$, indexed by $k\in \N$, and defined by
\[
\omega_k \;=\; \min \bigl\{\, |\langle \lambda, \underline{m} \rangle| \;:\;
\underline{m} \in M_1 \cup \cdots \cup M_n,\ 
\underline{m} \text{ non-resonant},\ 
\|\underline{m}\| \le 2^k \bigr\},
\]
where we note $\| \um \| = \sum_{i=1}^{n} |m_i|$ and the sets $M_i$ are defined as above. 

We now state the following conditions on a germ of singular vector field $\partial$:
\begin{itemize}
\item[] \textbf{$\omega$-condition}:
We say that  $\partial$ satisfies the  {\em Bruno's arithmetic $\omega$-condition} (or simply the $\omega$-condition) if it has a formal normal form $S+R$ with semi-simple part
$S = \sum_{i=1}^n \lambda_ix_i \frac{\partial}{\partial x_i}$ such that the sequence $\omega_k=\omega_k(\lambda)$ satisfies
\[ 
\sum_{k=0}^{\infty} -\frac{\log \bigl( \omega_k\bigr)}{2^k} < \infty.
\]
\item[] \textbf{Geometric $A$-condition}: We say that  $\partial$ satisfies the {\em geometric A-condition} if it has a formal normal form $S+R$ such that 
$$R=f S$$
for some formal series $f \in \mathbb{C} [[\x]]$. In other words, the nilpotent component is {\em collinear} to the semi-simple component. 
\end{itemize}
Observe that both conditions are formulated in terms of a normal form of $\partial$, and it is therefore natural to ask whether they depend on the particular choice of such a normal form.

The $\omega$-condition is clearly independent of this choice, since it depends only on the spectrum $\lambda=(\lambda_1,\ldots,\lambda_n)$, which is invariant under formal coordinate changes. As we shall prove later, the geometric $A$-condition is likewise independent of the choice of normal form, as a straightforward consequence of the uniqueness of the Jordan decomposition. Hence, both the $\omega$-condition and the geometric $A$-condition are intrinsic properties of $\partial$.
\begin{thm}[Bruno, 1971] \label{thm:Bruno-normalization}
    Let $\partial$ be a germ of singular analytic vector field fulfilling the $\omega$-condition and the geometric $A$-condition. Then, $\partial$ is analytically conjugated to a normal form. Explicitly, up to an analytic coordinate change, $\partial$ admits the decomposition $\partial = S + R$, where $S$ and $R$ satisfy conditions (1)–(3) above.
\end{thm}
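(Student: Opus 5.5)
The plan follows Bruno's original strategy: build the normalizing transformation by a modified Newton (KAM-type) iteration rather than the term-by-term Poincaré--Dulac procedure, and prove convergence with the $\omega$-condition. The geometric $A$-condition is what keeps the resonant part under control along the iteration.

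First, reduce to a convenient setting. After a linear change of coordinates we may assume the linear part of $\partial$ is $S+N$, with $S=\sum\lambda_i x_i\frac{\partial}{\partial x_i}$ diagonal and $N$ nilpotent commuting with $S$. The $A$-condition forces the linear part of $R=fS$ to be $f(0)S$. Since $R$ is nilpotent, this gives $f(0)=0$, hence $N=0$ and the linear part is exactly $S$. Write $\partial = S + R_0 + P_0$, where $R_0$ is the resonant (commuting with $S$) part up to some order and $P_0$ is the non-resonant remainder. Denote by $\Pi$ the projection onto non-resonant terms in the logarithmic basis; it is a diagonal operator on monomials $\x^{\um}x_i\frac{\partial}{\partial x_i}$. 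We seek a coordinate change $\Phi=\exp(Y)$ with $\Pi(\Phi\partial\Phi^{-1})=0$.

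Second, run the quadratic scheme. At step $k$ we have $\partial_k = S+R_k+P_k$, analytic on a polydisc of radius $r_k$, with $P_k$ of order $\ge 2^k$ and $R_k$ resonant. Take $Y_k$ solving the homological equation $[S,Y_k]=\Pi_{<2^{k+1}} P_k$. Its coefficients are divided by $\langle\lambda,\um\rangle$ with $\|\um\|\le 2^{k+1}$, so by Cauchy estimates $\|Y_k\|_{r_k-\delta_k}\le C\,\omega_{k+1}^{-1}\delta_k^{-1}\|P_k\|_{r_k}$. Conjugating by $\exp(Y_k)$ leaves a new non-resonant error that is quadratic in $P_k$, plus the cross terms $[R_k,Y_k]$.

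The cross terms are the main obstacle: a general resonant $R_k$ has terms of low order, so $[R_k,Y_k]$ is only linear in $P_k$ and would destroy quadratic convergence. Here the $A$-condition is used. I would show inductively that the resonant part can be kept of the form $R_k=f_kS$ with $S(f_k)=0$, via uniqueness of the Jordan decomposition. One must check that the formal normal form of $\partial_k$ is still collinear with $S$, and that truncations of normal forms of $\partial_k$ still have resonant part of the form $f_kS$. Then
\[
[f_kS,Y_k]=f_k[S,Y_k]-Y_k(f_k)S,
\]
and the first term is $f_k$ times a term already solved for. To handle this I would modify the homological equation to $[(1+f_k)S,Y_k]=\Pi P_k$, i.e.\ solve $[S,Y_k]=(1+f_k)^{-1}\Pi P_k$. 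Since $(1+f_k)^{-1}$ is resonant, multiplication by it commutes with $\Pi$, so small divisors are unaffected. The second term is collinear with $S$ and can be absorbed into the resonant part, because $Y_k(f_k)$ splits into a resonant part, which updates $f_k$, and a non-resonant part, which is quadratically small once handled similarly.

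Finally, choose $\delta_k$ with $\sum\delta_k<r_0/2$, for instance $\delta_k\sim r_0 2^{-k-2}$. Then the error obeys $\varepsilon_{k+1}\le C^{k}\omega_{k+1}^{-1}\delta_k^{-2}\varepsilon_k^2$. Taking logarithms and dividing by $2^{k}$, the condition $\sum -2^{-k}\log\omega_k<\infty$ gives $\varepsilon_k\le \rho^{2^k}$ for some $\rho<1$, provided the initial error is small. Smallness of the initial error is arranged by rescaling $\x\mapsto t\x$, which multiplies the nonlinear terms by powers of $t$. The composition $\exp(Y_k)\circ\cdots\circ\exp(Y_0)$ then converges on a polydisc of radius $r_0/2$ to an analytic $\Phi$. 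The limit $\Phi\partial\Phi^{-1}=S+fS$ is an analytic normal form, with $f=\lim f_k$ analytic satisfying $S(f)=0$ and $f(0)=0$, so $R=fS$ is nilpotent and commutes with $S$, giving (1)--(3).
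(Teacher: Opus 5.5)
Your algebraic skeleton is the right one, and it is the paper's. The paper obtains this statement, for logarithmic fields, as the case $B(\partial)=0$ of its Main Theorem. The shared ingredients are:
\begin{itemize}
\item dyadic doubling of the order;
\item a truncated homological equation whose divisors are controlled by $\omega_{k+1}$;
\item uniqueness of the Jordan decomposition to keep the resonant part of the jets collinear with $S$;
\item $\ad_{(1+f_k)S}$ in place of $\ad_S$.
\end{itemize}

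The gap is in the convergence argument. You claim that conjugation ``leaves a new non-resonant error that is quadratic in $P_k$'', and hence that $\varepsilon_{k+1}\le C^k\omega_{k+1}^{-1}\delta_k^{-2}\varepsilon_k^2$. This is false. Your homological equation removes only the non-resonant terms of $P_k$ of order $<2^{k+1}$. The part of $P_k$ of order $\ge 2^{k+1}$ is untouched: its non-resonant terms, and its resonant terms not yet known to be collinear with $S$. That part enters $P_{k+1}$ with size comparable to $\varepsilon_k$, not $\varepsilon_k^2$. Shrinking from $D_{r_k}$ to $D_{r_{k+1}}$ multiplies it only by $(r_{k+1}/r_k)^{2^{k+1}}$, which stays bounded below by a positive constant when $\delta_k\sim r_0 2^{-k-2}$. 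So $\varepsilon_k\le\rho^{2^k}$ does not follow. Raising the cut-off until the tail is quadratically small would force divisors at scale about $4^k$ at step $k$. You would then need $\sum_k 2^{-k}\log\omega_{2k}^{-1}<\infty$, which the $\omega$-condition does not give.

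The missing idea is that the smallness must come from the \emph{order} of the error, which your estimates never use, rather than from decay of its norm. In the paper, following Bruno, the remainder is only kept bounded, $\|R_k\|_{\rho_k}\le\Delta$. The radius is shrunk by $r/\rho=(\omega_k/(C2^k))^{C/2^k}$, so $\ord(U_k)\ge 2^k$ gives $\|U_k\|_r\le (r/\rho)^{2^k}\|U_k\|_\rho\le 2^{-k}$ for $C\ge 3$ (Lemma~\ref{lemma-bound-bracket-equation}). One then shows the conjugated remainder still has norm $\le\Delta$ on a further shrunk polydisk. There the factor $1-Ck/2^k$ produced by $r/\rho$ absorbs the contribution of $(\exp(\ad_{U_k})-\mathbf{Id})S$ (Lemma~\ref{lemma-bound-expU} and Corollary~\ref{corollary-to-lemma-expU}). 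The $\omega$-condition enters exactly as the statement that the product of these radius ratios has a positive limit.

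Your handling of $-Y_k(f_k)S$ is also incorrect, though easy to repair. Since $Y_k\in\bigoplus_{\alpha\ne 0}\Gr_\alpha(\tcD,S)$ and $f_k\in\Gr_0(\tO,S)$, the function $Y_k(f_k)$ has no resonant component at all. It is linear in $P_k$ and has order only $\ge 2^k+1$. So it can neither update $f_k$ nor be treated as quadratically small. Also, $[(1+f_k)S,Y_k]=\Pi P_k$ is not equivalent to $[S,Y_k]=(1+f_k)^{-1}\Pi P_k$.

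Instead, solve the former equation exactly. On $\Gr_\alpha$ one has $\ad_{(1+f_k)S}=\alpha(1+f_k)(\mathbf{Id}-\mathbf{N})$ with $\mathbf{N}(U)=\frac{U(f_k)}{\alpha(1+f_k)}S$. Because $S(f_k)=0$, $\mathbf{N}^2=0$, which gives the explicit inverse of Lemma~\ref{lemma-formula-inverse-adjoint}. The cost is a factor $\alpha^{-2}$ and a derivative of $f_k$, bounded by $2^k\|f_k\|$ since $f_k$ is a polynomial of degree at most $2^k$.
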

Note that this result can be rephrased as follows. Let $\mathcal{A}$ denote the group of germs of analytic automorphisms (i.e., local analytic coordinate changes). Then the orbit of $\partial$ under conjugation,
\[
\mathcal{A}\cdot\partial
:=
\left\{
\Phi\partial\Phi^{-1}:\Phi\in\mathcal{A}
\right\},
\]
contains at least one vector field in normal form.
 \begin{remark}
We remark that the geometric $A$-condition formulated here is strictly stronger than the original $A$-condition formulated by Bruno, but it is better adapted to the formulation of our main result. We refer the reader to \cite{Bruno1971_1972} or to \cite[sec. 10.3]{Stolovitch2000} for the precise statement of the $A$-condition.
\end{remark}
The result of Bruno has been generalized in several different directions. For instance, L. Stolovitch proved a normalization result for foliations generated by perturbations of commuting semi-simple vector fields under conditions of complete integrability. C. Chavaudret \cite{Chavaudret2016} proved an analog of Bruno's result for the normalization of a vector field in the vicinity of a $n$-torus. 

We observe that, if $\partial$ has no-resonances, the geometric $A$-condition is automatically satisfied and the $\omega$-condition implies that $\partial$ is analytically conjugated to a linear diagonal vector field. In this case, we simply say that $\partial$ is {\em analytically linearizable}. 

In this context, the Theorem \ref{thm:Bruno-normalization}
represents a significant weakening of the arithmetic conditions for linearizability previously imposed by other authors, such as C.-L. Siegel~\cite{Siegel1954} and V.-A. Pliss~\cite{Pliss1965}.  We also note that such condition is satisfied for all couples $\lambda \in \mathbb{C}^n$ outside a set of zero Lebesgue measure (see e.g. \cite{AkramovRakhimov2025}),  and that Perez-Marco and Yoccoz~\cite{Perez-Marco-Yoccoz94}  proved it to be a necessary and sufficient condition for analytic linearizability in dimension $n\le 2$. 

On the other hand, if $\partial$ has resonances, the geometric $A$-condition is rarely satisfied and 
imposes a strong restriction in order to guarantee analytic normalization.

In this setting, Bruno's insight, which we want to investigate in this paper, can be summarized as follows: rather than imposing the geometric $A$-condition from the outset, one should consider the problem of analytic linearizability restricted to the {\em formal locus} where this condition holds.

To enunciate our main result, we need introduce the notion of {\em Bruno ideal}. In what follows, we will always assume that $\partial$ is a {\em logarithmic vector field}, i.e.~a vector field preserving the coordinate hyperplanes $x_i = 0$, for $i=1,\ldots,n$. 

Suppose initially that \(\partial\) is a (possibly formal) logarithmic vector field in
normal form with non-vanishing semi-simple part \(S\). Then \(\partial\) can be
written as
\[
\partial = (1+f)\,S + \sum_{j=1}^{n-1} g_j\, T_j,
\]
where
\[
T_j = \sum_{i=1}^n \mu_{ji}\, x_i \frac{\partial}{\partial x_i}
\]
is an arbitrarily chosen family of diagonal vector fields such that the set
\(\{S, T_1, \ldots, T_{n-1}\}\) is \(\mathbb{C}\)-linearly independent, and where
\(f, g_j \in \mathbb{C}[[x]]\) are formal power series without constant term. In more details, let $A=(a_{ij})$ be the $n\times n$ constant matrix defining the change of basis from the standard logarithmic basis $\{x_i \frac{\partial}{\partial x_i}\}_{i=1}^n$ to $\{T_0, T_1, \ldots, T_{n-1}\}$, with $T_0=S$. We can then express $\partial = S+R$ as
\begin{align*}
\partial &= T_0 + \sum_{i=1}^{n} f_i x_i \frac{\partial}{\partial x_i} \\
         &= T_0 + \sum_{i=1}^n f_i \sum_{j=0}^{n-1} a_{i j} T_j \\
         &= \left( 1+ \sum_{i=1}^n a_{i0} f_i \right) T_0 + \sum_{j=1}^{n-1} \left( \sum_{i=1}^n a_{ij} f_i \right) T_j\\
         &= (1+f)\,S + \sum_{j=1}^{n-1} g_j\, T_j.
\end{align*}
Using this expression, the normal form condition~(3) stated above is equivalent to requiring
\[
S(f) = S(g_j) = 0, \qquad j = 1, \ldots, n-1.
\]
In this setting, we define the \emph{Bruno ideal} of \(\partial\) by
\[
B(\partial) = \langle g_1, \ldots, g_{n-1} \rangle \subset \mathbb{C}[[x]],
\]
i.e. the ideal of formal series generated by $g_1,...,g_{n-1}$. 
\begin{remark}
Note that the ideal $B(\partial)$ is independent of the choice of elements $\{T_1, \ldots, T_{n-1}\}$ completing $S$ to a logarithmic basis. Indeed, if $\{T_1', \ldots, T_{n-1}'\}$ is another choice, yielding the expansion $\partial = (1+f)S + \sum_{j=1}^{n-1} g_j' T_j'$, a direct computation using the change-of-basis matrix shows that $g_1', \ldots, g_{n-1}'$ generate the same ideal as $g_1, \ldots, g_{n-1}$.
\end{remark}

More generally, for an arbitrary singular vector field \(\partial\), we define
\(B(\partial)\) as follows: Choose a (formal) conjugated normal form
\(\delta = \Phi \partial \Phi^{-1}\), and define \(B(\partial)\) as the pullback
of \(B(\delta)\) under the coordinate change induced by \(\Phi\). As we shall prove, it turns out
that the resulting formal ideal \(B(\partial)\) is independent of the choice of
the conjugated normal form and is therefore intrinsically attached to
\(\partial\). Additionally, such formal ideal is invariant by $\partial$, i.e. 
$$ \partial \big( B(\partial) \big ) \subset B(\partial)$$
and we can consider the problem of reduction to normal form {\em modulo $B(\partial)$}.  Here, we refer the reader to subsection~\ref{subsect-normalformsdef} for the precise definition of a normal form modulo an invariant ideal. 

Our main result can now be stated as follows:
\begin{thmmain}\label{thm:main}
		Let $\partial$ be an analytic logarithmic vector field satisfying the $\omega$-condition. Then, the ideal $B(\partial)$ is analytic and there exists an analytic automorphism $\Phi$ such that the conjugated derivation $\delta=\Phi\partial\Phi^{-1}$ is in normal form {\em modulo $B(\delta)$}. 
	\end{thmmain}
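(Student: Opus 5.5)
The plan is a Bruno-type majorant scheme. The Poincaré–Dulac normalization is carried out only \emph{modulo} the ideal generated by the non-collinear part, and the control of the small divisors by the $\omega$-condition is done exactly as in the proof of Theorem~\ref{thm:Bruno-normalization}. The guiding observation is this. Modulo $B(\partial)$, a normal form is $(1+f)S$. This is collinear to $S$, so the geometric $A$-condition holds ``on the locus'' $B=0$. The divergence mechanisms that Bruno's $A$-condition rules out come only from the $g_jT_j$ terms, and these are killed when we work modulo the ideal.

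\textbf{Step 1: an analytic candidate for the ideal.} Write the logarithmic field as $\partial = S + \sum_i a_i\, x_i\frac{\partial}{\partial x_i}$ in the basis $T_0=S,T_1,\dots,T_{n-1}$, so that $\partial=(1+F)S+\sum_j G_jT_j$ with $F,G_j$ analytic. I would look for an analytic change $\Phi=\mathrm{id}+\cdots$ and analytic series $f,g_j$ with $S(f)=S(g_j)=0$ such that
\[
\Phi\partial\Phi^{-1}=(1+f)S+\sum_j g_jT_j \quad \text{mod } \langle g_1,\dots,g_{n-1}\rangle\cdot \mathrm{Der}.
\]
The construction is an iterative Newton scheme with quadratic convergence, as in Bruno/Stolovitch. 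At step $k$ we solve the homological equation $[S,Y]=Z^{\text{nonres}}$ on monomials of degree between $2^k$ and $2^{k+1}$. We then move the resonant part of the $T_j$-components into the ideal. At each step the current resonant $T_j$-coefficients are adjoined to the generators, and everything divisible by them is discarded. The point is to check that, modulo the current ideal $I_k$, the field keeps the shape $(1+f_k)S$ plus higher-order terms. The homological operator then acts only as multiplication by $\langle\lambda,\um\rangle$ on coefficient series. This gives the estimate $\|Y_k\|\le \omega_k^{-1}\|Z_k\|$ on a shrinking polydisk. This is exactly where collinearity is used: $[fS, Y]$ involves $S(\cdot)$ and does not produce the bad term $Y(f)$ with uncontrolled divisors. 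The loss is $\sum 2^{-k}\log\omega_k^{-1}<\infty$, so the composition $\Phi=\lim\Phi_k$ converges on a polydisk of positive radius.

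\textbf{Step 2: stabilization of the ideal.} The ideals $I_k$ are generated by $S$-invariant analytic series. I would show that they stabilize by a Noetherian argument: they form an increasing chain in the ring of $S$-invariant analytic germs, or equivalently their extensions to $\C\{x\}$ stabilize. Suppose $I_k=I$ for $k\ge k_0$. Then the limit $\delta=\Phi\partial\Phi^{-1}$ is in normal form modulo $I\cdot\mathrm{Der}$, meaning its class is $(1+f)S$ with $S(f)=0$ modulo $I$. A subtlety needs care: new generators may be absorbed into old ones only after the convergence estimates, so the ideal at step $k$ must be chosen generated by truncations with uniform bounds.

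\textbf{Step 3: identification $I=B(\delta)$.} Take any formal completion of the normalization modulo $I$ to a genuine formal normal form $\delta'=\Psi\delta\Psi^{-1}$ with $\Psi\equiv\mathrm{id}$ mod $I$. Then the $g_j'$ of $\delta'$ lie in $\widehat I$. Conversely, the generators of $I$ appear as the resonant $T_j$-coefficients at the step where they were introduced. Modulo the ideal already present at that step, they are unchanged by subsequent normalizations. Hence they lie in $B(\delta')$. Using the stated independence of $B$ from the chosen normal form, we get $\widehat I=B(\delta)$, so $B(\partial)=\Phi^*I$ is analytic.

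I expect the main obstacle to be Step~1 combined with Step~2: obtaining the Bruno estimates \emph{uniformly modulo a varying ideal}. Quotient norms must be controlled, and one must ensure that reducing modulo $I_k$ does not destroy the majorant bounds. For this I would first try working with the generators of $I_k$ as auxiliary variables in the style of a division theorem, dividing by them with Weierstrass–Grauert-type bounds on polydisks. Alternatively, one could restrict everything to the subring of $S$-invariant series, where division by resonant monomials is combinatorial.
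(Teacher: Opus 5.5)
\textbf{The gap: the ideal constructed in Steps 2 and 3 is the wrong one.} Adjoining the resonant $T_j$-coefficients block by block to an increasing chain $I_k$ does not produce $B(\delta)$; in general it produces a strictly larger ideal.

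Take $S=x\xx-y\yy$ on $(\C^3,0)$, $T_1=z\zz$, and $\partial=S+(xy+z^5)\,z\zz$. This field is analytic and logarithmic. It satisfies the $\omega$-condition, since every non-zero $\langle\lambda,\um\rangle$ is a non-zero integer. It is already a normal form, so $B(\partial)=\langle xy+z^5\rangle$.

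Your procedure adjoins $xy$ in the block $[2,4)$. It then adjoins $z^5$ in the block $[4,8)$, because $z^5$ is not divisible by $xy$. It ends with $I=\langle xy,z^5\rangle$. But $xy\notin\langle xy+z^5\rangle$. So the claim in Step 3 that the generators of $I$ lie in $B(\delta')$ is false, and the Noetherian stabilization in Step 2 stabilizes the wrong object.

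The reason is that $B$ is compatible with truncation only as an ideal of jets, $B(\partial^{(k)})=(B(\partial)+\mathfrak{m}^{(k)})/\mathfrak{m}^{(k)}$. Generators found at level $k$ must be \emph{corrected}, not supplemented, at level $k+1$. Hence $B$ is not an increasing union of ideals generated by polynomial blocks. Adjoining the full current coefficients does not help: later conjugations change them at order $\ge 2^{k+1}$, and the differences, which need not lie in $B$, then belong to $I$. Replacing generators instead of adjoining them destroys the monotonicity that your Noetherian argument needs.

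\textbf{The missing idea: no generators need to be built during the iteration.} The paper carries along the intrinsic formal ideal $B=\Gamma(\partial_{\mathrm{ss}}\wedge\partial_{\mathrm{nilp}})$, which is simply transported by conjugation: $B(\Phi\partial\Phi^{-1})=\Phi^*B(\partial)$. Under the hypothesis at level $k$, one has $\partial\equiv(1+f_0^{(k)})S+W$ modulo $B(\partial^{(k+1)})$, with $\ord(W)\ge 2^k$. One solves $[(1+f_0)S,U]=-W_\ast$ in the whole module $\tcD$. The hypothesis at level $k+1$ then follows from Lemma \ref{Lemma-goestoquotient}, because $S$ commutes with the nilpotent derivation $f_0^{(k)}S+W_0$.

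Analyticity of $B$ comes only at the end. For an analytic $\delta=S+R$ in normal form modulo $B(\delta)$, Proposition \ref{prop-analyticity-Bruno-ideal} identifies $B(\delta)$ with the extension of the analytic ideal $S[\Gamma(S\wedge R)+\Gamma([S,R])]$. As a result, the obstacle you anticipate (quotient norms, Weierstrass--Grauert division) never arises; only ordinary majorant norms are used.

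\textbf{A secondary error in Step 1.} The identity $[fS,Y]=f[S,Y]-Y(f)S$ shows that the bracket \emph{does} produce the term $Y(f)S$. If you only solve $[S,Y]=Z$, the leftover $[f_0S,Y]$ has order $2^k+\ord(f_0)$, which breaks the doubling of orders. The paper instead inverts $\ad_{(1+f_0)S}$ exactly on $\Gr_\alpha(\tcD,S)$, writing it as $\alpha(1+f_0)(\mathrm{Id}-N)$ with $N^2=0$, which holds because $S(f_0)=0$. The price is a divisor $\omega_k^{-2}$ and a factor $2^k$, not your $\omega_k^{-1}$; this loss is absorbed by taking $C\ge 3$.
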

\noindent We recall that a formal ideal $I \subset \C[[x]]$ is called {\em analytic} if it is generated by analytic germs, i.e. if there exists {\em convergent} series $f_1,...,f_m \in \C\{x\}$ such that 
$$
I = \langle f_1, \ldots, f_{m} \rangle \, \mathbb{C}[[x]].
$$
where the notation in the right-hand-side means the ideal generated by $f_1, \ldots, f_{m}$ in the ring $\mathbb{C}[[x]]$. Note that this property is equivalent to require that $I = \big( I \cap \C\{x\} \big)\, \C[[x]]$. 

It follows from the Theorem that the vanishing locus of $B(\partial)$ defines a germ of analytic subvariety $V(B(\partial)) \subset (\C^n,0)$, which is moreover invariant by $\partial$. We say such variety is the \emph{Bruno variety} of $\partial$.

The following result is an important {\em dynamical} consequence of the theorem.
    \begin{corollary}\label{cor:lienarfoliation}
        Let $\partial$ be an analytic logarithmic vector field satisfying the $\omega$-condition. Then, $\partial$ defines a linear foliation in restriction to its Bruno variety.
    \end{corollary}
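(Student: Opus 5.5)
We deduce the corollary directly from the Main Theorem. By the Main Theorem there is an analytic automorphism $\Phi$ such that $\delta=\Phi\partial\Phi^{-1}$ is in normal form modulo $B(\delta)$, and $B(\delta)$ is analytic. Since $\Phi$ is analytic, it maps the Bruno variety $V(B(\partial))$ biholomorphically onto $V(B(\delta))$. It therefore suffices to prove the statement for $\delta$ on $V:=V(B(\delta))$.

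\textbf{Step 1: unwinding ``normal form modulo $B(\delta)$''.} We read this (in the sense of subsection~\ref{subsect-normalformsdef}) as
\[
\delta = S + R + \sum_i h_i\, x_i\frac{\partial}{\partial x_i}, \qquad h_i\in B(\delta),
\]
where $S+R$ is in normal form. Since $\delta$ is logarithmic, $S+R$ is logarithmic as well. As in the construction preceding the definition of the Bruno ideal, we rewrite $S+R$ in the basis $\{S,T_1,\dots,T_{n-1}\}$:
\[
S+R=(1+f)S+\sum_j g_jT_j .
\]
The correction terms $h_i x_i\partial/\partial x_i$ can be absorbed in the same way, so that
\[
\delta=(1+\tilde f)S+\sum_j \tilde g_j T_j \quad\text{with}\quad \tilde g_j\in B(\delta).
\]
Here we use that $B(\delta)$ is intrinsic and coincides with the ideal generated by the $T_j$-coefficients of a formal normal form of $\delta$. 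So modulo $B(\delta)$ we get the congruence
\[
\delta\equiv (1+\tilde f)\,S \pmod{B(\delta)\cdot \mathrm{Der}}.
\]
The delicate point is to confirm that the definition of normal form modulo an ideal really yields this collinearity, and not merely a commutation relation modulo the ideal. I expect this to be the main (if mild) obstacle. If the definition only gives $[S,\delta]\in B(\delta)\,\mathrm{Der}$, we would first invoke the characterization of $B(\delta)$ as the ideal generated by the coefficients of $\delta$ that are not collinear to $S$.

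\textbf{Step 2: restriction to $V$.} The ideal $B(\delta)$ is $\delta$-invariant and analytic. Hence $V$ is an analytic germ invariant by $\delta$, and also by $S$ wherever this makes sense. On $V$ every generator of $B(\delta)$ vanishes, and therefore
\[
\delta|_V=(1+\tilde f)|_V\, S|_V .
\]
Since $\tilde f(0)=0$, the factor $1+\tilde f$ is a unit near the origin. Thus $\delta|_V$ and $S|_V$ define the same foliation on $V$. This foliation is the restriction of the linear diagonal foliation generated by $S$, which is what "linear foliation" means here.

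\textbf{Step 3: analyticity of $\tilde f$ on $V$.} One subtlety remains: $\tilde f$ might a priori be only formal. However, $\delta$ is analytic, so each component satisfies
\[
\delta(x_i)=x_i\bigl(\lambda_i(1+\tilde f)+(\text{elements of }B(\delta))\bigr).
\]
Choose $i$ with $\lambda_i\neq 0$. Dividing by $x_i\lambda_i$ shows that $1+\tilde f$ is congruent, modulo $B(\delta)$, to a convergent series. Therefore its restriction to $V$ is an analytic unit. This means that the foliation $\delta|_V$ is analytically (and not merely formally) equal to $S|_V$, which concludes the proof.
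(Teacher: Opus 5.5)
Your route matches the paper's: apply the Main Theorem, show that modulo $B(\delta)$ the derivation $\delta$ is a unit multiple of $S$, then restrict to $V$. The gap is in Step~1, where all the content sits.

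First, your reading of ``normal form modulo $B(\delta)$'' is not Definition~\ref{def-nf-invideal}. That definition only says that $B(\delta)$ is $S$-invariant and that $(\delta_{\tmop{ss}})_A=S_A$ on $A=\tO/B(\delta)$. It does not give $\delta\equiv S+R$ modulo $B(\delta)\tcD$ with $S+R$ an honest normal form.

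Second, $\tilde g_j\in B(\delta)$ does not follow from what you invoke. $S+R$ is not conjugate to $\delta$, so the description of $B(\delta)$ through a formal normal form of $\delta$ says nothing about the coefficients of $S+R$. The ``characterization'' of $B(\delta)$ by the non-collinear coefficients of $\delta$ is exactly what would need proof.

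In fact the claim is false for logarithmic derivations. Take $S=x\xx-y\yy$ on $\C^3$ and $\delta=S+(x+z)\,z\zz$. The substitution $Z=z\,e^{-\psi}$, with $\psi=x+O(z)$ determined order by order in $z$, turns $\delta$ into $S+\gamma(Z)\,Z\frac{\partial}{\partial Z}$ with $\gamma(Z)=Z+\cdots$. Hence $B(\delta)=\langle z\rangle$. Since $\delta_A=S_A$ on $A=\tO/\langle z\rangle$, $\delta$ is in normal form modulo $B(\delta)$. Yet its $z\zz$-coefficient $x+z$ is not in $\langle z\rangle$. Moreover, no decomposition as in your Step~1 exists, because $x$ is not congruent modulo $z$ to a resonant series. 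The mechanism is that $z\zz$ itself acts as zero on $A$. So a logarithmic derivation can vanish on $A$ without its coefficients lying in $B(\delta)$.

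With a non-standard complement even your Step~2 formula fails. Take $S=x\xx$, $\delta=x\xx+(x+y)\,y\yy$, $T_1=x\xx+y\yy$, $T_2=z\zz$. One checks as before that $B(\delta)=\langle y\rangle$, and again $\delta_A=S_A$, so $\delta$ is in normal form modulo $B(\delta)$. Then $\delta|_V=x\xx$, whereas $\tilde f=-(x+y)$ gives $(1+\tilde f)\,S|_V=(1-x)\,x\xx$.

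The repair is to avoid coefficients in a fixed basis altogether.
\begin{itemize}
\item Lemma~\ref{lemma-(1+m)} expands in a logarithmic basis adapted to $\delta_{\tmop{ss}}$. Directly from $B(\delta)=\Gamma(\delta_{\tmop{ss}}\wedge\delta_{\tmop{nilp}})$ it gives $\delta-(1+h)\,\delta_{\tmop{ss}}\in B(\delta)\tcD$ with $h\in\mathfrak m$.
\item Combined with $(\delta_{\tmop{ss}})_A=S_A$, this yields $\delta_A=(1+h)\,S_A$. So on the $S$-invariant variety $V$, $\delta$ is a unit multiple of the linear field $S$, which is the claim.
\item Step~3 is not needed, and as written it also uses $\tilde g_j\in B(\delta)$. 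If you want a convergent unit, use faithful flatness instead. The condition $\delta(x_i)-v\lambda_i x_i\in B(\delta)$ for all $i$ is $\OO$-linear in $v$ with analytic data. Hence the formal solution $1+h$ yields a convergent one.
\end{itemize}

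This is the mechanism behind the paper's proof via Corollary~\ref{cor:coeffsinB}. Note, however, that the last sentence of that corollary ($f-f_0,\ g_j\in B(\partial)$) rests on the same passage from ``$D_A=0$'' to ``$D\in B\tcD$''. It is contradicted by the two examples above. The robust part to rely on is $\partial_A=(1+h)S_A$.
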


\subsection{Previous works}
In~\cite{Bruno1975} and, more recently, in~\cite{Bruno1989}, Bruno introduced two
formal ideals \(\mathcal B(\partial) \supseteq \mathcal A(\partial)\) that are
closely related to the ideal \(B(\partial)\) defined above. 

In these references, he asserts that, if the eigenvalues of $S$ (the semi-simple part of $\partial$) are pairwise commensurable then $\mathcal A(\partial)$ is analytic. More generally, if these eigenvalues are purely imaginary and  satisfy the \(\omega\)-condition, then ideal \(\mathcal B(\partial)\) is analytic.
Unfortunately, these assertions are not proved in the cited works, and we have
not been able to locate any complete proofs in the references quoted by Bruno. 

Note that, under the additional assumption that \(\partial\) is
logarithmic, the ideal \(\mathcal B(\partial)\) originally defined by Bruno coincides with the ideal
\(B(\partial)\) as defined above.

The Main Theorem is also closely related to a result of Stolovitch~\cite{Stolovitch1994}
on the existence of analytic invariant varieties, which itself generalizes to
higher dimensions a classical result of Dulac~\cite{Dulac1923}.  

More precisely, let us state Stolovitch’s result in the present logarithmic
setting. Suppose that all resonant monomials associated with the diagonal vector
field \(S\) belong to the semigroup generated by the monomials
\(x^{\um_1},\ldots,x^{\um_s}\), for some vectors
\(\um_1,\ldots,\um_s \in \mathbb{N}^n \setminus \{0\}\).
Assume moreover that \(S\) satisfies the \(\omega\)-condition. Then, Stolovitch proves that, up to
analytic conjugacy,  any analytic logarithmic vector field with semi-simple part
\(S\) can be expanded in the form
$$
\partial 
= S + \sum_{i=1}^n \left(  \sum_{j=1}^s g_{ij}(x)\, x^{\um_j} \right)
x_i \frac{\partial}{\partial x_i},
$$
where the  \(g_{ij}\) are analytic germs.

Note that Stolovitch’s result remains valid without the assumption that
\(\partial\) is logarithmic, provided instead that all resonances are positive. We refer the reader to the original paper for the detailed definitions.
We also emphasize that the above expansion is particularly
useful for controlling the growth of solution curves of the vector field, since
it allows one to view them as \emph{perturbations} of the resonant monomials.
Indeed, for each \(j\), one has \(\partial(x^{\um_j})\) lying in the
ideal generated by
\(\langle x^{\um_1},\ldots,x^{\um_s} \rangle\). We observe however that the above expansion is not a {\em normal form} in the sense (1),(2),(3) since the non-linear part does not necessarily commute with $S$. 

In the logarithmic setting, our main result implies Theorem~\ref{thm:Bruno-normalization}.
In fact, the proof will follow closely the inductive scheme originally introduced by Bruno in \cite{Bruno1971_1972}, 
but conveniently adapted and reformulated using the elegant approach developed by Martinet in his Bourbaki 
survey~\cite{Martinet1981Norm}. In that survey, Martinet provides a detailed sketch 
of the proof of Bruno’s theorem in the non-resonant case. His method relies on the 
Jordan decomposition and on the natural \(S\)-graduation of the space of vector 
fields, which significantly simplifies the analytic estimates required to 
establish the analyticity of the conjugating map.

During the preparation of this work, we became aware of a recent preprint by 
Romanovski and Walcher~\cite{RomanovskiWalcher2025}, in which the authors propose 
a simplified proof of Theorem~\ref{thm:Bruno-normalization}. Their approach makes 
systematic use of the natural homogeneous \(S\)-graduation, in the same spirit 
as Martinet’s method, but without using the formalism of Jordan decompositions.

In \cite{Ecalle1992}, Ecalle uses the powerful formalism of mould-comould expansions and arborifications to give an alternative proof of Theorem~\ref{thm:Bruno-normalization} in the non-resonant case (see also \cite{FauvetMenousSauzin2025}). It would be interesting to study if Ecalle's formalism can be adapted to prove our main theorem. 
\subsection{Plan of the paper} In Section~\ref{sec:prelim}, we introduce the basic concepts and definitions, as
well as several classical results that will be used in the proof of
the Main Theorem. We also prove that the first part of the Main Theorem (analyticity of $B(\partial)$) is indeed a consequence of the second part (existence of an analytic normalization modulo $B(\partial)$).

The proof of the theorem is divided into two clearly distinguished parts,
presented in Sections~\ref{sec:formalNormalizationModBruno} and 
\ref{sec:analyticreduction}. In
Section~\ref{sec:formalNormalizationModBruno}, we establish a purely formal
normalization result by developing the inductive procedure required to reduce
a formal vector field to a normal form modulo its Bruno ideal.

In Section~\ref{sec:analyticreduction}, we show that, under the
\(\omega\)-condition, it is possible to control the domains of analyticity of
the successive coordinate changes constructed in the previous section when
they are applied to an analytic vector field.

Finally, in Section~\ref{sec:examplesapplications}, we discuss several examples
of Bruno ideals and present some applications of the main result.

\section{Derivations and normal forms modulo ideals}\label{sec:prelim}
In this section, we present some basic definitions and results that will be used in the paper. Since the normal form results will be explained in the formalism of a Jordan decomposition, we will hereby consider vector fields as {\em derivations}.
\subsection{Some facts about free modules and derivations}\label{subsec:derivfreemodule}
We recall some useful facts from commutative algebra and refer the reader to \cite{Eisenbud1995} for a comprehensive treatment. 

Let $R$ be a ring such that $\Q\subset R$ and let $M$ be a free $R$-module. 
\begin{enumerate}
    \item[(M.1)] For each ideal \ $I \subset R$, the quotient module $M / I M$ is a free module over the ring $R / I$.
    \item[(M.2)] The second exterior power $M \wedge M$ is a free module over $R$. 
\end{enumerate}
The {\tmem{coefficient ideal}} of an element $m \in M$ is the ideal
$\Gamma (m) \subset R$ generated by the coefficients of the expansion of $m$
with respect to any arbitrary free basis $\{ e_{\alpha} \}_{\alpha \in A}$ of
$M$. This ideal is independent of the choice of basis.

The {\tmem{ideal of collinearity}} of two elements $m, n \in M$ is the ideal
$\Gamma (m \wedge n)$, i.e. the coefficient ideal of $m \wedge n$, seen as an
element of the second exterior power $M \wedge M$.

Consider now the  $R$-module $\tmop{Der} (R)$ of derivations on $R$. Given an ideal $I\subset R$, we
denote by $\tmop{Der} (- \log I)$ the submodule of derivations $\partial \in
\tmop{Der} (R)$ such that $\partial (I) \subset I$. In this case, we will also
say that $I$ is {\tmem{{$\partial$}-invariant}} or that $I$ is {\tmem{preserved
by}} $\partial$. Note that $\tmop{Der}(-\log I)$ contains the submodule
$I\,\tmop{Der}(R)$ generated by all derivations of the form
$g\partial$, where $g\in I$ and $\partial\in\tmop{Der}(R)$.

A derivation $\partial \in \tmop{Der} (- \log I)$ induces a derivation in the
quotient ring $A = R / I$, which we will denote by $\partial_A$. Given an
element $f \in R$, we can write $(f \partial)_A = f_A \partial_A$, where $f_A$
denotes the class of $f$ in $A$.  
The following two facts will also be useful (see for instance~\cite{Seidenberg67} for a proof):
\begin{enumerate}
    \item[(D.1)] Suppose that $\partial_1, \partial_2 \in \tmop{Der} (- \log I)$ are such
that $(\partial_1)_A = (\partial_2)_A$ in $A = R / I$. Then $\partial_1 -
\partial_2$ belongs to the submodule $I \tmop{Der} (R)$.
\item[(D.2)] If $\partial \in \tmop{Der} (- \log I)$ then $\partial (\tmop{rad} (I))
\subset \tmop{rad} (I)$, where $\tmop{rad}(I)$ denotes the radical of $I$. Moreover, if $I$ is radical and $I = P_1 \cap \cdots
\cap P_n$ is its minimal prime decomposition, then $\partial (P_i) \subset P_i$
for each $i$. 
\end{enumerate}

\subsection{Formal and analytic logarithmic derivations. Graduations and Collinearity}\label{subsect-ideals-expansions}

We denote by $\mathcal{O} =\mathbb{R} \{ x \}$ or $\mathbb{C} \{ x \}$ the
ring of germs of real (resp. complex) analytic functions in $n$ variables
$x_1, \ldots, x_n$ with maximal ideal $\mathfrak{m} = (x_1, \ldots, x_n)$.

Let $\widetilde{\mathcal{O}} =\mathbb{R} [[\nobracket x]] \nobracket$ or
$\mathbb{C} [[x]]$ be the completion of $\mathcal{O}$ with respect to the
Krull topology defined by $\{ \mathfrak{m}^k \}_{k \in \mathbb{N}}$.
Given an ideal $I \subset \mathcal{O}$, we denote by $I
\widetilde{\mathcal{O}}$ the ideal generated by $I$ in $\widetilde{\mathcal{}
\mathcal{O}}$. Since $\mathcal{O} \subset \widetilde{\mathcal{O}}$ is a
faithfully flat extension, we have
\[ I \widetilde{\mathcal{O}} \cap \mathcal{O} = I . \]
We say that an ideal $\tilde{J} \subset \widetilde{\mathcal{O}}$ is
{\tmem{analytic}} if it has the form $\tilde{J} = I \widetilde{\mathcal{O}}$
for some $I \subset \mathcal{O}$.
Each element $f \in \tO$ is defined by a formal series
$$f = \sum_{\underline{m}\in \N^n} a_{\um} x^{\underline{m}}\, ,$$
where $a_\um \in \mathbb{C}$ and $x^\um=x_1^{m_1} \cdots x_n^{m_n}$. We define the support of $f$ by 
$$\supp(f) = \{ {\um}:a_{\um} \ne 0\}\, ,$$ 
and define the order and degree of $f$ by
\begin{equation}\label{def-degree-order}
\ord(f) = \inf\{\|\um\|: \um \in \supp(\partial)\}\quad \text{ and }\quad \deg(f) = \sup\{\|\um\|: \um \in \supp(\partial)\},
\end{equation}
where $\|\um\| = \sum_i |m_i|$. As usual, we adopt the convention that $\deg(0)=-\infty$ and $\ord(0)=\infty$. Note that $f$ is a polynomial iff $\deg(f) < \infty$, or equivalently, if $\supp(f)$ is finite. 

We denote by $\tmop{Der} \left( \oO \right)$ the module of derivations in $\oO$. An element of 
$\tmop{Der} \left( \oO \right)$ is fully determined by its action on the variables $x_1,\ldots,x_n$. Hence, we can identify $\tmop{Der} \left( \oO \right)$ with 
the free module of vector fields
\[ \sum_{i = 1}^n a_i \frac{\partial}{\partial x_i} \]
with coefficients $a_i \in \oO$. 
A derivation $\partial \in \tmop{Der} \left( \oO \right)$ is called {\tmem{local}} if $\partial
(\mathfrak{m}) \subset \mathfrak{m}$, which is equivalent to the condition that
$a_1,\ldots,a_n\in\mathfrak{m}$. We also define
$\tmop{Der}_{\log} (\mathcal{O}) \subset
\tmop{Der} \left( \oO \right)$ as the submodule of derivations preserving the
ideal $I_{\log} = \langle x_1 \cdots x_n\rangle\mathcal{O}$ generated by the product of all coordinates functions. It is a free module over $\mathcal{O}$ with basis $\left\{ x_i
\frac{\partial}{\partial x_i} \right\}$.

We define similarly the free modules $\tmop{Der}_{\log} \left( \tO \right)
\subset \tmop{Der} \left( \tO \right)$ of derivations and logarithmic
derivations in the ring $\tO$. Each derivation $\partial \in \tmop{Der} \left(
\oO \right)$ extends uniquely to a derivation in $\tmop{Der} \left( \tO
\right)$. We say that a derivation $\partial$ in $\tmop{Der} \left( \tO
\right)$ is {\tmem{analytic}} if $\partial$ is the extension of a derivation
in $\tmop{Der} \left( \oO \right)$.

In this paper, we consider only logarithmic derivations. To simplify the notation, we will henceforth denote the modules 
\(\tmop{Der}_{\log}(\oO)\) and \(\tmop{Der}_{\log}(\tO)\) simply by \(\cD\) and \(\tcD\), respectively.

We will frequently consider logarithmic derivations of the form
$$
x^{\um} L(\lambda),$$
where $\um \in \N^n,$ and $L (\lambda) := \sum \lambda_j x_j\frac{\partial}{\partial x_j}$ for some $\lambda \in \C^n$.
We will call them {\em monomial derivations}.  Note that a monomial derivation acts on monomials by the formula
\begin{equation}\label{Lie-deriv-monomial}
x^\um L(\lambda)\, \big(x^\un\big) = \langle \lambda , \un  \rangle\,  x^{\um+\un}\, 
\end{equation} 
and Lie bracket of two monomial derivations is given by 
\begin{equation}\label{Lie-bracket-formula} 
\big[ \, x^{\um} L(\lambda), \x^{\un} L(\mu)\, \big]  = \langle \lambda , \un  \rangle \,\x^{\um+\un} L(\mu)  - \langle \mu , \um  \rangle \x^{\um+\un}L(\lambda).
\end{equation}
In particular, $[L(\lambda), L(\mu)] = 0$. We further observe that every derivation $\partial\in\tcD$ admits a unique expansion as a sum of monomial derivations,
\begin{equation}\label{monomial-exp-der}
\partial=\sum_{\um\in\mathbb N^n}x^{\um}\,L(\lambda_{\um}),
\end{equation}
(with $\mathbb N=\mathbb Z_{\ge 0}$) where $\lambda_{\um}\in\mathbb C^n$ for every $\um\in\mathbb N^n$. We call \eqref{monomial-exp-der} the {\em monomial expansion} of~$\partial$.
Indeed, writing
\[
\partial=\sum_{i=1}^n f_i\,x_i\frac{\partial}{\partial x_i},
\]
with $f_1,\ldots,f_n\in\tO$, and expanding each coefficient as
\[
f_i=\sum_{\um\in\mathbb N^n}a_{i,\um}x^{\um},
\]
we obtain
\[
\partial=\sum_{i=1}^n\sum_{\um\in\mathbb N^n}
a_{i,\um}x^{\um}x_i\frac{\partial}{\partial x_i}
=\sum_{\um\in\mathbb N^n}
x^{\um}
\left(
\sum_{i=1}^n
a_{i,\um}\,
x_i\frac{\partial}{\partial x_i}
\right).
\]
Hence, \eqref{monomial-exp-der} is obtained by setting $\lambda_{\um}=(a_{1,\um},\ldots,a_{n,\um})\in\mathbb{C}^n$.

Based on this expansion, define the {\em support of $\partial$} by 
$$\supp(\partial) = \{ {\um}:\lambda_{\um} \ne 0\},$$ 
and define the order and degree of $\partial$ by
\begin{equation}\label{def-degree-order-deriv}
\ord(\partial) = \inf\{\|\um\|: \um \in \supp(\partial)\}\quad \text{ and }\quad \deg(\partial) = \sup\{\|\um\|: \um \in \supp(\partial)\}.
\end{equation}
We say that $\partial$ is {\em $k$-flat} if $\ord(\partial) \geq k$. Note that this is equivalent to say that 
$$
\partial(\mathfrak{m})\subset \mathfrak{m}^{k+1}.
$$
In analogy to the case of germs, we say that $\partial$ {\em polynomial} if $\deg(\partial)<\infty$, or equivalently, if $\supp(\partial)$ is finite. 

It will be also convenient to consider expansions of derivations with respect to a fixed set of diagonal derivations. A collection of diagonal derivations $L(\mu_0),\ldots,L(\mu_{n-1})$ will be called a {\em logarithmic basis} if the vectors $\mu_0,\ldots,\mu_{n-1}\in \C^n$ are linearly independent. In this case, each derivation 
$\partial \in \cD$ has a unique expansion
\begin{equation}\label{logarithmic-expansion-of-partial}
\partial = \sum_{i=0}^{n-1} g_i \, L(\mu_i)
\end{equation}
with coefficients $g_0,\ldots,g_n \in \tO$. We call it the {\em logarithmic expansion of $\partial$} with respect to the basis $\{L(\mu_i)\}$.
\begin{remark}\label{passage-between-expasions}
The transition between the monomial expansion~\eqref{monomial-exp-der} and the above expansion can be easily obtained by expressing each diagonal derivation $L(\lambda_{\underline m})$ in the basis $\{L(\mu_0),\ldots,L(\mu_{n-1})\}$ and then regrouping the terms. 
In other words, it suffices to consider the $n\times n$ matrix $\mathbf{T}$ of change of basis between the canonical basis $\{e_1,..,e_n\}$ and the basis 
$\{\mu_0,\ldots,\mu_{n-1}\}$ of $\C^n$.
\end{remark}
We shall frequently use the following fact (see e.g. \cite{Martinet1981Norm}, section 2.1): A diagonal derivation $S= L(\lambda)$ induces a graduation
		\begin{equation} \label{graduation-S}
		\tO = \bigoplus_{\alpha \in \C} \Gr_{\alpha}(\tO, S),
	\end{equation}
	where $\Gr_{\alpha}(\tO, S)$ is the linear subspace of series $f\in \tO$ such that $S(f)=\alpha f$.  We also observe that $S$ induces a graduation on the formal derivations, 
    \begin{equation} \label{graduation-S-der}
		\tcD = \bigoplus_{\alpha \in \C} \Gr_{\alpha}(\tcD, S),
	\end{equation}
    where $\Gr_{\alpha}(\tcD, S)= \{\delta \in \tcD : [S,\delta]=\alpha\, \delta\}$, where $[\, ,\, ]$ denotes the Lie bracket. 
    
    More concretely, $\Gr_{\alpha}(\tO, S)$ and $\Gr_{\alpha}(\tcD, S)$ are respectively the spaces of series and derivations with expansion of the form
    $$
    f = \sum_{\underline{m}} a_{\um} x^{\underline{m}},\qquad \partial = \sum_{\um} \x^{\um} \, L(\lambda_{\um}),
    $$
    where the sums are taken over all indices $\underline{m} \in \mathbb{N}^n$ satisfying $\langle \lambda,\underline{m} \rangle = \alpha$. We also recall the basic graduated algebra relations
    \begin{equation}\label{graduated-ids}
    \Gr_{\alpha}(\tO, S)\cdot\Gr_{\beta}(\tO, S)\subset \Gr_{\alpha+\beta}(\tO, S),\qquad  \big[\Gr_{\alpha}(\tcD, S), \Gr_{\beta}(\tcD, S)\big]\subset \Gr_{\alpha+\beta}(\tcD, S)
    \end{equation}
    Also, 
    $$\Gr_{\alpha}(\tO, S)\cdot \Gr_{\beta}(\tcD, S) \subset \Gr_{\alpha+\beta}(\tcD, S)$$
    for all $\alpha,\beta \in \C$. Similar graduations are defined over the analytic ring $\vO$ and analytic derivations $\cD$.

We conclude by recalling some facts related to derivations and the exponential map, and refer the reader to \cite{Bourbaki2007}, \textsection 6 for the detailed proofs. If $\partial$ is a local derivation of $\oO$ then its {\em exponential}
\begin{equation}\label{exponential-formula}
\Phi = \exp(\partial) = \sum_{k=0}^\infty \frac{\partial^k}{k !} = \mathbf{Id} + \partial + \frac{\partial^2}{2}+\cdots
\end{equation}
is an automorphism of $\oO$. Moreover, since we will always assume that $\partial$ is logarithmic, we can write
$$\Phi(x_i) = x_i\, u_i,\quad i=1,\ldots,n$$
for some units $u_i \in \oO$. In particular, we can consider {\em conjugation morphism} acting on $\cD$, defined by either one of the following equalities 
\begin{equation}\label{conjugation-morphism}
\delta \mapsto  \Phi\, \delta\, \Phi^{-1} = \exp(\ad_\partial) \delta = \sum_{n=0}^\infty \frac{1}{n!} \ad_\partial^n\, (\delta)
\end{equation}
where $\ad_\partial (\delta) = [\delta,\partial]$ and $\ad_\partial^n = \ad_\partial \circ \cdots \circ \ad_\partial$. 

All these properties also hold for formal derivations.

\subsection{Jordan decomposition}\label{subsect-jordan-decomp}

Let $\mathcal{J}^k = \vO / \mathfrak{m}^{k + 1}$ denote the ring of $k$-jets,
which we identify, as a vector space, with the polynomials of degree at most
$k$. 

A local derivation is called \emph{semi-simple/nilpotent} if the quotient derivation
${\partial_{\mathcal{J}^k}}:= \partial \mod \mathfrak{m}^{k+1} $ is a semi-simple/nilpotent endomorphism of $\mathcal{J}^k$, for
each $k \in \mathbb{N}$.  

By the standard Jordan--Chevalley decomposition, for any local derivation $\partial$ and any integer $k \ge 0$, the induced derivation $\partial_k = {\partial_{\mathcal{J}^k}}$ on the $k$-jet space admits a unique decomposition
$$
\partial_k = (\partial_k)_{\tmop{ss}} + (\partial_k)_{\tmop{nilp}}
$$
where $(\partial_k)_{\tmop{ss}}$ and $(\partial_k)_{\tmop{nilp}}$ are, respectively, semi-simple and nilpotent derivations on $\mathcal{J}^k$ that commute (see, e.g., \cite{humphreys1972}, Section 4.2, Lemma B). 

Furthermore, for each $0 \le l \le k$, let $\pi_{kl}: \mathcal{J}^k \rightarrow \mathcal{J}^l$ denote the canonical truncation morphism. Since the Jordan decomposition is functorial with respect to surjective ring morphisms, we have 
$$
(\partial_l)_{\tmop{ss}} = (\pi_{kl})_* (\partial_k)_{\tmop{ss}} \quad \text{and} \quad (\partial_l)_{\tmop{nilp}} = (\pi_{kl})_* (\partial_k)_{\tmop{nilp}}.
$$
i.e.~the Jordan decomposition is compatible with truncations. Therefore, taking the projective limit as $k \rightarrow \infty$, we conclude that each local derivation $\partial \in \tcD$ admits a unique {\tmem{Jordan decomposition}} $\partial = \partial_{\tmop{ss}} + \partial_{\tmop{nilp}}$, where $\partial_{\tmop{ss}}, \partial_{\tmop{nilp}} \in \tcD$ are respectively semi-simple and nilpotent derivations satisfying 
$[\partial_{\tmop{ss}}, \partial_{\tmop{nilp}}] = 0$ and 
$$
(\partial_{\tmop{ss}})_{\mathcal{J}^k}=(\partial_k)_{\tmop{ss}},  \qquad(\partial_{\tmop{nilp}})_{\mathcal{J}^k}=(\partial_k)_{\tmop{nilp}} 
$$
for each $k\ge 0$. 
\begin{remark} We observe the following facts:
  \label{remark-invJordan} 
  \begin{enumerate}
  \item The definition of a nilpotent local derivation is formulated at the level of jets and can be equivalently stated as follows: a derivation $\partial$ on a local ring $(R, \mathfrak{m})$ is nilpotent if for every integer $k \ge 0$, there exists an integer $n = n(k)$ such that $\partial^n(\mathfrak{m}) \subset \mathfrak{m}^k$. For instance, the derivation $\partial = x^2 \frac{\partial}{\partial x}$ on $\mathbb C[[x]]$ is nilpotent: since $\partial^n(x) = n! x^{n+1}$. 
  \item The assumption that $\partial \in \tcD$ implies that $\partial_{\tmop{ss}}$ and $\partial_{\tmop{nilp}}$ also belong to $\tcD$.   
  \item An ideal $J \subset \widetilde{\mathcal{O}}$ is
  $\partial$-invariant if and only if it is both $\partial_{\tmop{ss}}
  $ and $\partial_{\tmop{nilp}}$ invariant.
  \item If $J$ is a $\partial$-invariant ideal, one can define the relative Jordan decomposition of the induced derivation $\partial_A$ on the quotient algebra $A=\tO/J$.
The construction is exactly the same as above, but carried out on the sequence of relative jet spaces
$\mathcal{J}^k_A = A/(\mathfrak{m}^{k+1}/J)$.
It follows from item (2) and from the uniqueness of the Jordan decomposition that this relative decomposition is given by
$$\partial_A = (\partial_{\mathrm{ss}})_A + (\partial_{\mathrm{nilp}})_A.$$
In fact, one can prove that the semisimple–nilpotent decomposition is {\em functorial}, meaning that it is preserved under arbitrary (formal) ring morphisms. 
\end{enumerate}
\end{remark}

We observe that, from the definitions, it follows that any semi-simple derivation is formally diagonalizable over $\mathbb{C}$ (see \cite{Martinet1981Norm}, section 2). More precisely, eventually extending the base field from $\mathbb{R}$ to $\mathbb{C}$, there exists a formal automorphism $\Phi$ of $\tO$ such that we can write
$$
S := \Phi\, \partial_{\tmop{ss}}\, \Phi^{-1} = \sum_{i=1}^n \lambda_ix_i \frac{\partial}{\partial x_i}.
$$ 
In particular, any derivation $\partial \in \tcD$ is formally conjugated to a Poincar\' e-Dulac normal form, according to the definition given in the Introduction. 

Moreover, it follows from (\ref{graduation-S}) that $\partial_{\tmop{ss}}$ defines a graduation
\begin{equation}\label{graduation-partialss}
\tO = \bigoplus_{\alpha \in \C} \Gr_{\alpha}(\tO, \partial_{\tmop{ss}}),
\end{equation}
where the degree $\alpha$-component, $\Gr_{\alpha}(\mathcal O, \partial_{\tmop{ss}})$ is the set of series $f \in \tO$ such that $\partial_{\tmop{ss}}(f) = \alpha f$. Note that we can also write 
$$\Gr_{\alpha}(\mathcal O, \partial_{\tmop{ss}}) = \Phi\big( \Gr_{\alpha}(\mathcal O, S)\, \big),$$
i.e.~$\Gr_{\alpha}(\mathcal O, \partial_{\tmop{ss}})$ is the image of the subspace $\Gr_{\alpha}(\mathcal O, S)$ under the diagonalizing automorphism $\Phi$ defined above.

It is important to remark that, even if $\partial$ is analytic, its semi-simple and nilpotent components $\partial_{\tmop{ss}}$ and $\partial_{\tmop{nilp}}$ are not necessarily analytic, as the following example illustrates.
\begin{example}
    Consider a germ of analytic {\em saddle-node singularity} $\partial= y\yy + f(x,y)x \xx$ with $f\in \mathfrak{m}$ and suppose that $\partial_{\tmop{ss}}$ and $\partial_{\tmop{nilp}}$ are both analytic. Since $\partial_{\tmop{ss}}$ is formally linearizable (that is, formally conjugate to $y\yy$) and clearly satisfies the $\omega$-condition, Theorem~\ref{thm:Bruno-normalization} implies that $\partial_{\tmop{ss}}$ is analytically conjugate to $y\yy$. Therefore, up to analytic conjugacy, we can assume that $[\partial_{\tmop{nilp}}, y\yy]=0$, which implies that we can write $\partial_{\tmop{nilp}}= F(x) x \xx + G(x) y \yy$, with $F,G\in \vO$ vanishing at $0$; and the factorization
 $$
 \partial = (1+G(x))\, y\yy + F(x) \xx = u(x) \left( y\yy + H(x)\, x\xx\right),
 $$
with $u=1+G$ a unit and $H=F/u$. Then, by a further analytic coordinate change involving only the $x$-variable, we can assume that $\partial$ has the form
    $$  \partial= v(x)\left( y\yy + \frac{x^{k+1}}{1+\lambda x^k} \xx\right).$$
with some new analytic unit $v$. However, this special form implies the holonomy of $\partial$ along the strong separatrix $x=0$ can be embedded into a flow. This is a very restrictive condition.  Indeed, for a {\em generic} choice of $f$ one can show that the holonomy of $\partial$ along the strong separatrix cannot be embedded into a flow (see e.g.~\cite{Martinet-Ramis1983}). We refer the reader to \cite{Elizarov1985} for a systematic discussion of a large classes of examples presenting similar phenomena.
\end{example}
{\bf Assumption:} {\em From now on, we shall tacitly make the assumption that the derivation $\partial$ satisfies the {\em non-degeneracy condition} }
$$\partial_{\tmop{ss}} \ne 0.$$
Note that this assumption merely excludes the trivial case of our main theorem. Indeed, if $\partial_{\tmop{ss}} = 0$, then $\partial = \partial_{\tmop{nilp}}$ is already in normal form and there is nothing further to prove.\subsection{Collinearity and Bruno ideal}\label{def-collinearity-id}
We recall that the {\tmem{ideal of collinearity}} of two derivations $\partial_1,
  \partial_2 \in \tcD$ is the ideal 
  $$\Gamma (\partial_1 \wedge \partial_2) \subset
  \tO,$$
  as defined in subsection \ref{sec:prelim}. We observe that if $\partial_1,\partial_2$ are analytic then $\Gamma (\partial_1 \wedge \partial_2)$ is analytic. Indeed, if we write the expansions
$$
\partial_1 = \sum_{i=1}^n f_{1,i}\, x_i \frac{\partial}{\partial x_i} \quad \text{and} \quad \partial_2 = \sum_{i=1}^n f_{2,i} \, x_i \frac{\partial}{\partial x_i},
$$
where $f_{1,i}$ and $f_{2,i}$ are germs in $\vO$, then the ideal $\Gamma (\partial_1 \wedge \partial_2)$ is generated by the $2 \times 2$ minors $f_{1,i} f_{2,j} - f_{1,j} f_{2,i}$ for all $1 \le i < j \le n$.
  
More generally, suppose that $\partial_1, \partial_2$ preserve some ideal $I \subset \tO$
  (i.e.~that $\partial_i(I)\subset I$ for $i=1,2$). Then, we define the
  {\tmem{mod-$I$ ideal of collinearity}} by the ideal sum
   \[ \Gamma (\partial_1 \wedge \partial_2) + I. \]
  \begin{remark}
  Let us suppose that $I$ is an analytic ideal and that $\partial$ is an analytic derivation. Then
  the condition of invariance $\partial (I) \subset I$ is equivalent to saying
  that the germ of analytic variety $V(I)$ has the following property:\ \
  \[ \forall p \in V (I) : \partial_p \in T_p V  (I), \]
  where $\partial_p$ denote the vector defined by $\partial$ in the tangent
  space $T_p \mathbb{C}^n$ at $p$ and $T_p V (I) $ is the Zariski tangent space of $V
  (I)$ at $p$.
  
  Let us now assume that $\partial_1, \partial_2$ are analytic derivations satisfying $\partial_1 (I), \partial_2 (I) \subset I$ and consider the mod-$I$ ideal of collinearity $J
  = \Gamma (\partial_1 \wedge \partial_2) + I$. Then the subvariety $V (J)
  \subset V (I)$ is defined by
  \[ V (J) = \{ p \in V (I) : \partial_{1, p} \wedge \partial_{2, p} = 0 \}, \]
 i.e. the set of points in $V(I)$ where the $2$-vector $\partial_1 \wedge \partial_2$ vanishes.
 \end{remark}
 The following ideal plays a prominent role in the present work.
    \begin{definition}
  The {\tmem{logarithmic Bruno ideal}}, or simply, the {\em Bruno ideal} of a derivation $\partial \in \tcD$
  is the formal ideal defined by
  \[ B (\partial) = \Gamma (\partial_{\tmop{ss}} \wedge
     \partial_{\tmop{nilp}}) .\]
\end{definition}
\noindent In other words, $B (\partial)$ is the ideal of collinearity of the semi-simple and
  nilpotent components of $\partial$.
\begin{remark}
As mentioned in the introduction, the definition of the ideal $B(\partial)$ given above is inspired by the ideals $\mathcal{B}(\partial)$ and $\mathcal{A}(\partial) \subset \mathcal{B}(\partial)$ originally introduced by Bruno (see, e.g., \cite{Bruno1989}, p.~297), although they do not coincide in general. We plan to explore the precise relationship between these ideals in future work.
\end{remark}
\begin{lemma}
  \label{lemma-(1+m)}Let $\partial \in \tcD$. Then, $B (\partial)$ is $\partial$-invariant and, in the
  quotient algebra $A = \tO / B (\partial)$, we can write
  \[ \partial_A = (1 + h_A) (\partial_{\tmop{ss}})_A, \]
  where $h$ is a element of $\tmop{Gr}_0 \left( \tO, \partial_{\tmop{ss}}
  \right) \cap \mathfrak{m}$.
\end{lemma}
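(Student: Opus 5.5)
Since $B(\partial)$ is defined via the Jordan decomposition, which is compatible with formal conjugation, I first reduce to the case where $\partial_{\tmop{ss}} = S = L(\lambda)$ is diagonal. Indeed, choosing a formal automorphism $\Phi$ with $\Phi\,\partial_{\tmop{ss}}\,\Phi^{-1} = S$, the uniqueness of the Jordan decomposition gives $(\Phi\partial\Phi^{-1})_{\tmop{ss}} = S$ and $(\Phi\partial\Phi^{-1})_{\tmop{nilp}} = \Phi\partial_{\tmop{nilp}}\Phi^{-1}$. Coefficient ideals of $2$-vectors are transported by $\Phi$, and so are the graduations, as noted after \eqref{graduation-partialss}. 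Hence all statements of the lemma are invariant under conjugation, and I may assume $\partial = S + N$ with $[S,N]=0$ and $N$ nilpotent.

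Since $S\neq 0$, I complete it to a logarithmic basis $\{T_0=S, T_1,\ldots,T_{n-1}\}$ and write, as in the Introduction,
$$N = f\,S + \sum_{j=1}^{n-1} g_j T_j.$$
The first step is to compute $B(\partial)$. We have $S\wedge N = \sum_j g_j\, S\wedge T_j$, and the elements $T_i\wedge T_j$ form a free basis of $\tcD\wedge\tcD$, so $B(\partial) = \langle g_1,\ldots,g_{n-1}\rangle$.

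The second step is the graduation. Because $[S,T_j]=0$, we get $[S,N] = S(f)\,S + \sum_j S(g_j)T_j = 0$, which forces $S(f)=S(g_j)=0$. Thus $f, g_j \in \Gr_0(\tO,S)$. Nilpotency of $N$ gives $f\in\mathfrak m$: the linear part of $N$ on $\mathfrak m/\mathfrak m^2$ is diagonal with entries given by the constant terms, and it must be nilpotent. The only care needed is that the $g_j$ might have constant terms, but those vanish for the same reason. So $h := f \in \Gr_0\cap\mathfrak m$.

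The third step is invariance. Since $S$ is diagonal, $S$ preserves $B(\partial)$ because $S(g_j)=0$. For $N$ I would use the commutation relation $[S,N]=0$ more cleverly, or instead compute directly: $N(g_k) = fS(g_k) + \sum_j g_j T_j(g_k) = \sum_j g_j T_j(g_k) \in B(\partial)$. Hence $\partial = S+N$ preserves $B(\partial)$, using Remark~\ref{remark-invJordan}(3) if needed.

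Finally, modulo $B(\partial)$ the terms $g_jT_j$ act on $A$ as zero, since they lie in $B(\partial)\,\tcD$. Therefore $\partial_A = S_A + f_A S_A = (1+h_A)S_A$, as claimed. The only step requiring care is the initial reduction, specifically checking that the collinearity ideal and the graduation transform correctly under the formal automorphism $\Phi$. This follows from $\Phi(\Gamma(\delta_1\wedge\delta_2)) = \Gamma(\Phi\delta_1\Phi^{-1}\wedge\Phi\delta_2\Phi^{-1})$, which in turn follows from transforming the logarithmic basis by $\Phi$, since $\Phi$ preserves logarithmic derivations.
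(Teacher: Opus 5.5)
Your proof is correct and follows the same route as the paper. Expand $\partial_{\mathrm{nilp}}$ in a commuting basis $\{\partial_{\mathrm{ss}},T_1,\ldots,T_{n-1}\}$, read off $B(\partial)=\langle g_1,\ldots,g_{n-1}\rangle$ from the $\partial_{\mathrm{ss}}\wedge T_j$ coefficients, then use $[\partial_{\mathrm{ss}},\partial_{\mathrm{nilp}}]=0$ for the grading and nilpotency for $h\in\mathfrak m$. You simply make explicit the reduction to a diagonal $\partial_{\mathrm{ss}}$ that the paper leaves implicit when it ``completes $\partial_{\mathrm{ss}}$ to a commuting logarithmic basis''.

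The one assertion you leave unjustified is that the diagonalizing automorphism preserves the coordinate hyperplanes, so that conjugation maps $\tcD$ to itself and transports its coefficient ideals. This is equally implicit in the paper, and it holds because each $\partial_{\mathrm{ss}}$-invariant ideal $(x_i)$ contains a $\partial_{\mathrm{ss}}$-eigenvector $x_iu_i$ with $u_i$ a unit; these eigenvectors give the diagonalizing coordinates.
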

\noindent We recall that $\tmop{Gr}_0 \left( \tO, \partial_{\tmop{ss}}
  \right)$ denotes the degree-0 component of the graduation given in (\ref{graduation-partialss}).
\begin{proof} 
  Using the non-degeneracy assumption, we complete $\partial_{\tmop{ss}}$ to a commuting logarithmic basis $\{
  \partial_{\tmop{ss}}, L_1, \ldots, L_{n - 1} \}$ of $\tcD$. Then we can
  write the expansion
  \[ \partial = \partial_{\tmop{ss}} + \partial_{\tmop{nilp}} =
     \partial_{\tmop{ss}} + h \partial_{\tmop{ss}} + \sum_{k = 1}^{n - 1} b_k
     L_k, \]
  where we necessarily have $h, b_1, \ldots, b_{n - 1} \in \mathfrak{m}$.  Indeed, assume for the sake of contradiction that at least one coefficient among $h, b_1, \dots, b_{n - 1}$ lies in $\tO \setminus \mathfrak{m}$ (i.e., is a unit). Rewriting the expansion of $\partial_{\tmop{nilp}}$ in terms of the standard logarithmic basis $\left\{x_i \frac{\partial}{\partial x_i}\right\}$, we have
$$
\partial_{\tmop{nilp}} = h \partial_{\tmop{ss}} + \sum_{k = 1}^{n - 1} b_k L_k = \sum_{i = 1}^{n} c_i x_i \frac{\partial}{\partial x_i},
$$
which implies that at least one coefficient, say $c_j$, must also be a unit. Therefore, applying $\partial_{\tmop{nilp}}$ to $x_j$, we obtain
$$
\partial_{\tmop{nilp}}(x_j) = c_j x_j = (\text{unit}) \cdot x_j.
$$
Consequently, by the Leibniz rule, $(\partial_{\tmop{nilp}})^k(x_j) = (\text{unit}) \cdot x_j$ for all $k \ge 1$. However, according to Remark~\ref{remark-invJordan}(1), this contradicts the assumption that $\partial_{\tmop{nilp}}$ is a nilpotent derivation.
  
 The commutativity
  relation $[\partial_{\tmop{ss}}, \partial_{\tmop{nilp}}] = 0$ implies also that
  $h, b_1, \ldots, b_{n - 1} \in \tmop{Gr}_0 \left( \tO,
  \partial_{\tmop{ss}} \right)$. Moreover, the definition of the ideal $B (\partial)$
  implies that it is generated by $\langle b_1, \ldots, b_{n - 1} \rangle$. An
  easy computation shows that $B (\partial)$ is both $\partial_{\tmop{ss}}$
  and $\partial_{\tmop{nilp}}$ invariant. Hence, it is $\partial$-invariant.
  The last statement is obvious.
\end{proof}

\begin{remark}\label{remark-Bnotnecanalytic}
  As remarked above, even if we assume that $\partial$ is an analytic
  derivation, we cannot in general guarantee that $B (\partial)$ is an
  analytic ideal since the semi-simple and nilpotent components of $\partial$ are not necessarily analytic.
\end{remark}
For an ideal $J \subset \tilde{\mathcal{O}}$, we denote by
  $\Phi^{\ast} J$ the ideal generated by $\{f \circ \phi \mid f \in J\}$, with
  $\phi$ being the coordinate change associated with $\Phi$. In other words, $\phi$ is the formal substitution of variables $(x_1,...,x_n) \mapsto (\phi_1(x),..,\phi_n(x))$ where $\phi_1,...,\phi_n \in \tO$ are series defined by
 $$
 \phi_1=\Phi(x_1),\ldots,\phi_n=\Phi(x_n).
 $$
The following result shows the intrinsic nature of the Bruno ideal. 
\begin{lemma}
  \label{lemma-pull-back-B} For any automorphism $\Phi$ of
  $\tilde{\mathcal{O}}$, one has
  \[ B (\Phi \partial \Phi^{- 1}) = \Phi^{\ast} B (\partial). \]
  \end{lemma}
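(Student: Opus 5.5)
The plan is to reduce the statement to two facts: the Jordan decomposition is compatible with conjugation by $\Phi$, and the coefficient ideal of a $2$-vector transforms by the substitution $\phi$. Set $\delta=\Phi\partial\Phi^{-1}$. Conjugation by $\Phi$ is an automorphism of the algebra of (local) derivations, and for every $k$ it induces an isomorphism of jet spaces $\mathcal J^k\to\mathcal J^k$ because $\Phi(\mathfrak m^{k+1})=\mathfrak m^{k+1}$. Hence $\Phi\partial_{\tmop{ss}}\Phi^{-1}$ is semi-simple, $\Phi\partial_{\tmop{nilp}}\Phi^{-1}$ is nilpotent, and the two still commute. Their sum is $\delta$, so by uniqueness of the Jordan decomposition we get
$$\delta_{\tmop{ss}}=\Phi\partial_{\tmop{ss}}\Phi^{-1},\qquad \delta_{\tmop{nilp}}=\Phi\partial_{\tmop{nilp}}\Phi^{-1}.$$
We also need these derivations to remain logarithmic. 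If $\Phi$ does not preserve $I_{\log}$, then $\delta$ is not in $\tcD$, and the collinearity ideal has to be computed in the free module $\tmop{Der}(\tO)$ with basis $\partial/\partial x_i$. I will therefore first check that the coefficient ideal of $\partial_1\wedge\partial_2$ gives the same ideal whether it is computed in $\tcD\wedge\tcD$ or in $\tmop{Der}(\tO)\wedge\tmop{Der}(\tO)$.

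\emph{Transformation of the wedge.} Write a derivation in the basis $\partial/\partial x_i$ as the vector of values $(\partial(x_1),\dots,\partial(x_n))$. Then $\Gamma(\partial_1\wedge\partial_2)$ is generated by the $2\times2$ minors $\partial_1(x_i)\partial_2(x_j)-\partial_1(x_j)\partial_2(x_i)$. For $D=\Phi\partial\Phi^{-1}$ we compute $D(x_i)=\Phi\big(\partial(\Phi^{-1}x_i)\big)$. Put $\psi_i=\Phi^{-1}(x_i)$. The chain rule gives $\partial(\psi_i)=\sum_k \frac{\partial\psi_i}{\partial x_k}\partial(x_k)$. So the vector of $D$ equals $\Phi$ applied to $J\psi\cdot v_\partial$, where $J\psi$ is the Jacobian matrix of $\psi$ and $v_\partial$ is the vector of $\partial$. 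The minors of $D_1\wedge D_2$ are therefore $\Phi$ applied to the minors of $(J\psi)(v_1\ v_2)$. By Cauchy--Binet these minors are combinations of the minors of $(v_1\ v_2)$ with coefficients the $2\times2$ minors of $J\psi$. The matrix $J\psi$ is invertible, so $\Lambda^2 J\psi$ is invertible over $\tO$, and the two ideals coincide. This yields
$$\Gamma(\delta_{\tmop{ss}}\wedge\delta_{\tmop{nilp}})=\Phi\big(\Gamma(\partial_{\tmop{ss}}\wedge\partial_{\tmop{nilp}})\big)\,\tO.$$

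\emph{Identification with $\Phi^*$.} For a series $f$ we have $\Phi(f)=f\circ\phi$, since $\Phi$ is a continuous algebra automorphism sending $x_i$ to $\phi_i$. Hence $\Phi(J)\tO=\Phi^*J$, which is exactly the claimed identity.

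The main obstacle is the bookkeeping of which coordinate change appears, $\phi$ or its inverse, together with checking that the logarithmic and non-logarithmic collinearity ideals agree. The first I will settle by applying the formula on the single generator $x_i$. For the second, I will use that on the open set where $x_1\cdots x_n\neq0$ the two frames differ by the diagonal matrix $\mathrm{diag}(x_i)$. This introduces factors $x_ix_j$, which might not be invertible, so an honest comparison is needed. If that fails, I will restrict to automorphisms preserving $I_{\log}$, which is the setting relevant to the paper; there both computations take place in $\tcD$, and Cauchy--Binet applies directly to the change-of-frame matrix of logarithmic bases, which is invertible.
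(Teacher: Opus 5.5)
Your first step is the same as the paper's. Conjugation by $\Phi$ preserves semi-simplicity, nilpotency and commutation, so uniqueness gives $\delta_{\tmop{ss}}=\Phi\partial_{\tmop{ss}}\Phi^{-1}$ and $\delta_{\tmop{nilp}}=\Phi\partial_{\tmop{nilp}}\Phi^{-1}$. The problem is your main route after that. It proves the transformation law for the wrong ideal, and the comparison you plan to ``check'' is false, not just delicate.

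The paper's $B(\partial)$ is the coefficient ideal in $\tcD\wedge\tcD$. It is generated by the minors $m_{ij}$ of the logarithmic components $\partial_{\tmop{ss}}(x_i)/x_i$ and $\partial_{\tmop{nilp}}(x_i)/x_i$. The minors you form in the frame $\{\partial/\partial x_i\}$ are instead $x_ix_j\,m_{ij}$.

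For instance, take $S=x\frac{\partial}{\partial x}-y\frac{\partial}{\partial y}$ and $\partial=S+xy\cdot y\frac{\partial}{\partial y}$, which is already in normal form. Then $B(\partial)=\langle xy\rangle$, while your ideal is $\langle x^2y^2\rangle$.

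Nor can you recover the logarithmic ideal from yours. For $n=3$, the minor vectors $(m_{12},m_{13},m_{23})=(x_3,0,0)$ and $(0,x_2,0)$ both give $\langle x_1x_2x_3\rangle$ after multiplying by $x_ix_j$. Yet they define the different ideals $\langle x_3\rangle\neq\langle x_2\rangle$. So the Cauchy--Binet computation with the ordinary Jacobian $J\psi$ cannot be transported to $B$, and your fallback is not optional.

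The fallback is correct and is exactly the paper's argument. Assume $\Phi(I_{\log})=I_{\log}$. The paper's proof makes the same tacit assumption, since it needs $\{\Phi B_i\Phi^{-1}\}$ to again be a free basis of $\tcD$. The assumption also holds for every automorphism used later, namely $\exp(U)$ with $U\in\tcD$.

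Under this assumption, $D\mapsto\Phi D\Phi^{-1}$ maps $\tcD$ bijectively onto itself and satisfies $\Phi(gD)\Phi^{-1}=\Phi(g)\,\Phi D\Phi^{-1}$. Hence:
\begin{enumerate}
\item it sends the basis $\{x_i\frac{\partial}{\partial x_i}\}$ to a free basis of $\tcD$;
\item the coefficients of $\delta_{\tmop{ss}}\wedge\delta_{\tmop{nilp}}$ in the transported basis of $\tcD\wedge\tcD$ are $\Phi(a_{ij})=\Phi^*a_{ij}$;
\item basis-independence of the coefficient ideal then concludes.
\end{enumerate}

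If you want to keep your chain-rule computation, run it on logarithmic components instead. Since $\psi_i=\Phi^{-1}(x_i)$ is a unit times some $x_{\tau(i)}$, you get $\frac{D(x_i)}{x_i}=\Phi\big(\sum_k L_{ik}\frac{\partial(x_k)}{x_k}\big)$ with $L_{ik}=\frac{x_k}{\psi_i}\frac{\partial\psi_i}{\partial x_k}$. The matrix $L$ is a permutation matrix plus a matrix with entries in $\mathfrak m$, so it lies in $\mathrm{GL}_n(\tO)$. Cauchy--Binet with $\Lambda^2L$ then gives the lemma.
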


  \begin{proof}
      
  Let $\delta = \Phi \partial \Phi^{- 1}$. Since the Jordan
  decomposition is preserved under conjugation, we have
 $\delta_{\tmop{ss}} = \Phi \partial_{\tmop{ss}} \Phi^{- 1}$ and $\delta_{\tmop{nilp}}=
     \Phi \partial_{\tmop{nilp}} \Phi^{- 1}$. Therefore, computing in $ \widetilde{\DD} \wedge \widetilde{\DD}$, we obtain the identity
\[\delta_{\tmop{ss}} \wedge
     \delta_{\tmop{nilp}} = \Phi (\partial_{\tmop{ss}} \wedge
     \partial_{\tmop{nilp}}) \Phi^{- 1} . \]
  If we choose an arbitrary free basis $(B_i \wedge B_j)_{i < j}$ of $\widetilde{\DD}
  \wedge \widetilde{\DD}$, and write
  \[ \partial_{\tmop{ss}} \wedge \partial_{\tmop{nilp}} = \sum_{i < j} a_{ij} 
     \hspace{0.17em} B_i \wedge B_j, \]
  then the above identity gives
  \[ \delta_{\tmop{ss}} \wedge \delta_{\tmop{nilp}} = \Phi
     (\partial_{\tmop{ss}} \wedge \partial_{\tmop{nilp}}) \Phi^{- 1} = \sum_{i
     < j} (\Phi^{\ast} a_{ij})  \hspace{0.17em} (\Phi B_i \Phi^{- 1}) \wedge
     (\Phi B_j \Phi^{- 1}) . \]
  Hence, as $B (\partial)$ is the ideal generated by the coefficients
  $\{a_{ij} \}_{i < j}$, the ideal $B (\delta)$ is generated by the transformed
  coefficients $\{\Phi^{\ast} a_{ij} \}_{i < j}$.\footnote{This follows from
  the natural action of $\Phi$ on $\widetilde{\DD} \wedge \widetilde{\DD}$.}
\end{proof}
     Note that if we consider the associated quotient algebras
  \[ A = \widetilde{\mathcal{O}} / B (\partial)  \quad \text{and} \quad A' =
     \widetilde{\mathcal{O}} / B (\Phi \partial \Phi^{- 1}),  \]
  then the above lemma implies that the automorphism $\Phi$ induces a natural algebra morphism (which we denote by the same letter) $\Phi : A \to
  A'$, and the induced derivations satisfy
  \[ (\Phi \partial \Phi^{- 1})_{A'} = \Phi \hspace{0.17em} \partial_A 
     \hspace{0.17em} \Phi^{- 1} . \]
The following lemma will play an important role in the proof of the Proposition \ref{prop:formalinductivestep}. 
\begin{lemma}
  \label{Lemma-goestoquotient}Suppose that $I$ is a {$\partial$}-invariant
  ideal and that there exists two derivations $S$, $R \in \tcD$ satisfying the
  following conditions:
  \begin{enumeratealpha}
    \item $S$ is semi-simple, $R$ is nilpotent and $[S, R] = 0$.
    
    \item The ideal $I$ is $(S + R)$-invariant and, in the quotient algebra $A
    = \tO / I$ we have the equality
    \[ \partial_A = (S + R)_A. \]
  \end{enumeratealpha}
  Then the ideal $I$ is both $S$-invariant and $R$-invariant. Moreover
  \begin{equation}
    (\partial_{\tmop{ss}})_A = S_A, \quad (\partial_{\tmop{nilp}})_A = R_A.
    \label{condition-A}
  \end{equation}
\end{lemma}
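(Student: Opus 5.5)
The plan is to reduce the statement to the uniqueness of the Jordan decomposition, applied on jet spaces of the quotient algebra $A=\tO/I$.

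\textbf{Step 1: invariance of $I$ under $S$ and $R$.} By hypothesis (b), $I$ is $(S+R)$-invariant. Since $S$ is semi-simple, $R$ is nilpotent and $[S,R]=0$, the sum $S+R$ is a local derivation. By uniqueness of the Jordan decomposition, its semi-simple and nilpotent parts are exactly $S$ and $R$. Remark~\ref{remark-invJordan}(3), applied to the derivation $S+R$, then shows that any $(S+R)$-invariant ideal is both $S$-invariant and $R$-invariant. Applied to $I$, this gives the first claim. The same remark applied to $\partial$, with $I$ being $\partial$-invariant, shows that $I$ is also invariant under $\partial_{\tmop{ss}}$ and $\partial_{\tmop{nilp}}$. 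Hence all four induced derivations $S_A$, $R_A$, $(\partial_{\tmop{ss}})_A$ and $(\partial_{\tmop{nilp}})_A$ are well defined on $A$.

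\textbf{Step 2: the relative Jordan decomposition.} By Remark~\ref{remark-invJordan}(4), the relative Jordan decomposition of $\partial_A$, computed on the jet spaces $\mathcal{J}^k_A$, is
$$\partial_A=(\partial_{\tmop{ss}})_A+(\partial_{\tmop{nilp}})_A.$$
We check that $S_A+R_A$ is also such a decomposition of the same derivation $(S+R)_A=\partial_A$. On each $\mathcal{J}^k_A$, which is a quotient of $\mathcal{J}^k$ by an $S$- and $R$-invariant subspace, the following hold:
\begin{itemize}
\item $S$ induces a semi-simple endomorphism, since the restriction or quotient of a semi-simple endomorphism to an invariant subspace is semi-simple;
\item $R$ induces a nilpotent endomorphism;
\item the two induced endomorphisms commute, because $[S_A,R_A]=[S,R]_A=0$.
\end{itemize}

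\textbf{Step 3: conclusion.} Uniqueness of the Jordan–Chevalley decomposition on each finite-dimensional $\mathcal{J}^k_A$ gives
$$(S_A)_{\mathcal{J}^k_A}=((\partial_{\tmop{ss}})_A)_{\mathcal{J}^k_A},\qquad (R_A)_{\mathcal{J}^k_A}=((\partial_{\tmop{nilp}})_A)_{\mathcal{J}^k_A}\quad\text{for every }k.$$
Passing to the projective limit over $k$ yields \eqref{condition-A}.

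\textbf{Main obstacle.} The delicate point is justifying that semi-simplicity and nilpotency descend to the quotient jets $\mathcal{J}^k_A=\tO/(I+\mathfrak{m}^{k+1})$, and that derivations of $A$ are determined by their jets. For descent, I would use that $I+\mathfrak{m}^{k+1}$ is $S$-invariant, hence $S$ acts semi-simply on $\tO/\mathfrak{m}^{k+1}$ and on its quotient. For determination by jets, I would use Krull's intersection theorem in the Noetherian local ring $A$, namely $\bigcap_k(\mathfrak{m}^{k+1}+I)=I$.
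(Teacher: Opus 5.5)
Your proof is correct and follows the paper's route. The paper likewise sets $\delta=S+R$, notes that $\delta_{\tmop{ss}}=S$ and $\delta_{\tmop{nilp}}=R$, uses Remark~\ref{remark-invJordan}(3) to get the $S$- and $R$-invariance of $I$, and declares \eqref{condition-A} ``obvious'' from uniqueness of the relative Jordan decomposition, since $\partial_A=\delta_A$. Your Steps 2--3 spell out that last step, which the paper leaves implicit: semi-simplicity and nilpotency descend to the quotient jets $\mathcal{J}^k_A$, and Krull's intersection theorem lets you pass to the limit.
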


\begin{proof}
  Consider the derivation $\delta : = S + R$, which has Jordan decomposition
  $\delta_{\tmop{ss}} = S$ and $\delta_{\tmop{nilp}} = R$. The item $(b)$ of the enunciate
  implies that $\partial - \delta \in I \tcD$, by property~$(D.1)$ in Section~\ref{subsec:derivfreemodule}. Note also that $I$ is
  $\delta$-invariant. Hence, by the Remark \ref{remark-invJordan}(3), $I$ is both
  $S$ and $R$ invariant. The final statement is obvious. 
\end{proof}
Using a very similar argument, we can establish the following {\em relative version} of the above lemma:
\begin{lemma}\label{Lemma-relative-version}
Suppose that $J \subset I$ are ideals in $\tO$ such that $J$ is invariant under $\partial$, $S$, and $R$. Let $S_C$ and $R_C$ denote the derivations induced by $S$ and $R$ on the quotient algebra $C = \tO / J$, and let $K = I/J$. Suppose further that the following conditions hold:
\begin{enumeratealpha}
  \item $S_C$ is semi-simple, $R_C$ is nilpotent, and $[S_C, R_C] = 0$.
  \item The ideal $K$ is invariant under $S_C + R_C$ and, on the quotient algebra $A = C/K$, we have the equality
  \[ \partial_A = (S + R)_A. \]
  \item The ideal $I$ is invariant under $\partial$.
\end{enumeratealpha}
Then the ideal $K$ is invariant under $S_C$ and $R_C$, and property~\eqref{condition-A} holds in the quotient algebra $A$ (which is isomorphic to $\tO / I$ by the Third Isomorphism Theorem).
\end{lemma}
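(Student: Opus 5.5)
The plan is to mirror the proof of Lemma~\ref{Lemma-goestoquotient}, but to work inside the quotient ring $C=\tO/J$ rather than $\tO$. Since $J$ is invariant under $\partial$, $S$ and $R$, these three derivations induce derivations $\partial_C$, $S_C$, $R_C$ on $C$. By hypothesis (c), $I$ is $\partial$-invariant, so $K=I/J$ is $\partial_C$-invariant.

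The first step is to apply hypothesis (a) to the derivation $\delta_C := S_C + R_C$. The pair $(S_C, R_C)$ then gives its Jordan decomposition, so that
\[
(\delta_C)_{\mathrm{ss}} = S_C, \qquad (\delta_C)_{\mathrm{nilp}} = R_C .
\]
This relies on the uniqueness of the Jordan decomposition on the relative jet spaces $\mathcal{J}^k_C$, as in Remark~\ref{remark-invJordan}(4).

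The second step uses hypothesis (b). The derivations $\partial_C$ and $\delta_C$ both preserve $K$ and induce the same derivation on $A = C/K$. Property (D.1), applied in the ring $C$ (which contains $\mathbb{Q}$), gives
\[
\partial_C - \delta_C \in K\,\mathrm{Der}(C).
\]
Hence $K$ is $\delta_C$-invariant; this is also given directly by (b). Remark~\ref{remark-invJordan}(3), transposed to $C$, then shows that $K$ is invariant under $(\delta_C)_{\mathrm{ss}} = S_C$ and under $(\delta_C)_{\mathrm{nilp}} = R_C$. Transposing Remark~\ref{remark-invJordan}(3) is legitimate because the ss and nilpotent components are limits of polynomials in $\delta$ on each finite-dimensional jet space, so they preserve every $\delta$-invariant subspace.

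The final step is to identify the Jordan components of $\partial$ on $A$. By Lemma~\ref{Lemma-goestoquotient}, or by functoriality of the Jordan decomposition (Remark~\ref{remark-invJordan}(4)), $I$ is invariant under $\partial_{\mathrm{ss}}$ and $\partial_{\mathrm{nilp}}$, and $(\partial_A)_{\mathrm{ss}} = (\partial_{\mathrm{ss}})_A$. Since $\partial_A = (\delta_C)_A$ and $(S_C)_A$, $(R_C)_A$ are commuting semi-simple and nilpotent derivations on $A$ summing to $\partial_A$, uniqueness of the Jordan decomposition on $A$ gives
\[
(\partial_{\mathrm{ss}})_A = S_A, \qquad (\partial_{\mathrm{nilp}})_A = R_A,
\]
under the identification $A \cong \tO/I$. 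This is \eqref{condition-A}.

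I expect the main obstacle to be justifying the Jordan-decomposition facts in the quotient $C$: that semi-simplicity and nilpotency descend to $A$, and that uniqueness holds there. Both should follow from the jet-space construction restricted to the subspaces $\mathcal{J}^k_A$, where everything is finite-dimensional linear algebra.
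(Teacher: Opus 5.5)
Your proposal is correct and follows essentially the same route as the paper: identify $(S_C,R_C)$ as the Jordan decomposition of $\delta_C=S_C+R_C$, deduce that $K$ is $S_C$- and $R_C$-invariant, and conclude by uniqueness of the Jordan decomposition on $A$. Your last step, which uses (c) with Remark~\ref{remark-invJordan}(3),(4) to get $(\partial_A)_{\mathrm{ss}}=(\partial_{\mathrm{ss}})_A$, spells out what the paper compresses into ``Consequently''. One small point: justify that step only by functoriality, because Lemma~\ref{Lemma-goestoquotient} does not apply here ($S$ and $R$ are assumed semi-simple, nilpotent and commuting only on $C$, not on $\tO$).
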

\begin{proof}
Recall the derivation $\delta = S + R$. Since $J$ is invariant under both $S$ and $R$, the derivations $\delta$, $S$, and $R$ induce derivations on the quotient algebra $C = \tO / J$, which we denote by $\delta_C = S_C + R_C$. Furthermore, by hypothesis we have
$$
(\delta_C)_{\tmop{ss}} = S_C \quad \text{and} \quad (\delta_C)_{\tmop{nilp}} = R_C;
$$
that is, $S_C$ and $R_C$ are, respectively, the semi-simple and nilpotent components of the Jordan decomposition of $\delta_C$ (see Remark~\ref{remark-invJordan}(4)).

By assumption, the ideal $K = I/J$ of $C$ is invariant under $\delta_C$. The general property of the Jordan decomposition implies that $K$ is also invariant under $S_C$ and $R_C$. Furthermore, hypothesis (b) states that $K$ is invariant under both $\delta_C$ and $\partial_C$. Therefore, the difference $\delta_C - \partial_C$ belongs to the ideal $K \tcD_C$, where $\tcD_C$ denotes the module of logarithmic derivations on $C$. Passing to the quotient ring $A = C/K$, this implies that the induced derivations coincide ($\delta_A = \partial_A$). Consequently, we obtain
$$
(\partial_{\tmop{ss}})_A = S_A \quad \text{and} \quad (\partial_{\tmop{nilp}})_A = R_A.
$$
\end{proof}
\subsection{Normal form and normal form modulo an ideal}\label{subsect-normalformsdef}
From now on, we fix a non-zero diagonal derivation $S=L(\lambda)$.
\begin{definition}\label{def-S-perturb}
An \tmem{$S$-perturbation} is a formal derivation
$\partial \in \tcD$ with a monomial expansion of the form
\begin{equation}\label{mon-expansion-S-pert}
  \partial = S + \sum_{\|\um\| \ge 1} x^{\um} \, L(\lambda_{\um}).
\end{equation}
\end{definition}
As in the introduction, we say that $\partial$ is in {\tmem{normal form}} if 
$\partial_{\tmop{ss}}=S$, or, equivalently, if $$\supp(\partial) \subset \{\um : \langle \lambda, \um \rangle = 0\}.$$ 
We now introduce a {\em relative} version of such concept, by considering the restriction of $\partial$ to a quotient algebra by an invariant ideal. 
\begin{definition}\label{def-nf-invideal}
Suppose that $I \subset \widetilde{\vO}$ is a
$\partial$-invariant ideal. We say that an $S$-perturbation $\partial$ is in {\tmem{normal form
modulo $I$}} if the following two conditions hold:
\begin{itemizedot}
  \item The ideal $I$ is $S$-invariant and
  \item $(\partial_{\tmop{ss}})_A = S_A$ in the quotient algebra $A = \tO /
  I$.
\end{itemizedot}
\end{definition}
Note that, when $I=0$, we recover the usual notion of normal form. In the next enunciate, we suppose that $\partial$ is given by (\ref{mon-expansion-S-pert}) and consider the nilpotent derivation 
$$R = \partial - S = \sum_{\|\um\| \ge 1} x^{\um} \, L(\lambda_{\um}).
$$
\begin{proposition}\label{prop:normalform-SR-invariant}
  If $\partial$ is in normal form modulo $I$ then:
  \begin{enumerate}
    \item $I$ is $R$-invariant and
    
    \item $(\partial_{\tmop{nilp}}) _A = R_A$
  \end{enumerate}
  As a consequence, $[S_A, R_A] = 0$ (i.e. $S$ and $R$ commute modulo $I$). 
\end{proposition}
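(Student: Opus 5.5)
The plan is to reduce everything to the fact that both the Jordan decomposition and ideal-invariance pass to quotients. We know $I$ is $\partial$-invariant and, by Definition~\ref{def-nf-invideal}, also $S$-invariant. Since $R = \partial - S$, invariance of $I$ under $R$ is immediate: for $f \in I$ we have $R(f) = \partial(f) - S(f) \in I$. This proves item (1) without using any Jordan theory.

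For item (2), we work in the quotient algebra $A = \tO/I$. Because $I$ is invariant under $\partial$, $S$ and $R$, all three induce derivations $\partial_A$, $S_A$, $R_A$ on $A$, and $\partial_A = S_A + R_A$ by linearity of passing to the quotient. By Remark~\ref{remark-invJordan}(3), $I$ is invariant under $\partial_{\tmop{ss}}$ and $\partial_{\tmop{nilp}}$. Remark~\ref{remark-invJordan}(4) then gives $\partial_A = (\partial_{\tmop{ss}})_A + (\partial_{\tmop{nilp}})_A$, and this is the relative Jordan decomposition of $\partial_A$. The normal form hypothesis says $(\partial_{\tmop{ss}})_A = S_A$. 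Subtracting from the two expressions of $\partial_A$ yields
\[
(\partial_{\tmop{nilp}})_A = \partial_A - (\partial_{\tmop{ss}})_A = (S_A + R_A) - S_A = R_A ,
\]
which is item (2).

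The commutation statement follows directly. Since $[\partial_{\tmop{ss}}, \partial_{\tmop{nilp}}] = 0$ in $\tcD$ and both preserve $I$, the induced derivations commute on $A$: the bracket of induced derivations is the induced bracket. Hence $[S_A, R_A] = [(\partial_{\tmop{ss}})_A, (\partial_{\tmop{nilp}})_A] = ([\partial_{\tmop{ss}}, \partial_{\tmop{nilp}}])_A = 0$.

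The only point requiring care is the use of Remark~\ref{remark-invJordan}(3)--(4): that $I$ is invariant under the Jordan components, and that these components descend to the relative Jordan decomposition on $A$. Both are already recorded in the paper, so I expect no real obstacle. Note that we never need to know that $R$ is nilpotent, nor that $R_A$ is the nilpotent part of $\partial_A$ a priori; this comes out of the subtraction.
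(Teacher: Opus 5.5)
Your proof is correct and follows the paper's own argument. You get item (1) from $R=\partial-S$ together with the $\partial$- and $S$-invariance of $I$. You get item (2) from the subtraction $R_A=\partial_A-S_A=\partial_A-(\partial_{\tmop{ss}})_A=(\partial_{\tmop{nilp}})_A$, and the commutation $[S_A,R_A]=0$ is then inherited from $[\partial_{\tmop{ss}},\partial_{\tmop{nilp}}]=0$.
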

\begin{proof}
  The first statement is clear. Furthermore, in the quotient algebra we have
  $R_A = (\partial - S)_A = \partial_A - S_A = \partial_A -
  (\partial_{\tmop{ss}})_A = (\partial_{\tmop{nilp}})_A$.
\end{proof}
Let us observe that the relation $[S_A, R_A] = 0$ is equivalent to state that
\
\[ \left[ S, \sum_{\|\um\| \ge 1} x^{\um} \, L(\lambda_{\um}) \right] \in I \tcD. \]
Applying (\ref{Lie-bracket-formula}), this is also equivalent to say that $S (x^\um) \in I$ for all $\um \in \supp(\partial)$.
\begin{remark}\label{remark-S(f)}
For future reference, let us reformulate the above computation. Fix an arbitrary collection of diagonal vector fields $T_1=L(\mu_1),\ldots,T_{n-1}=L(\mu_{n-1})$ such that $\{S, T_1, \dots, T_{n-1}\}$ forms a logarithmic basis. As in Subsection~\ref{subsect-ideals-expansions}, we consider the logarithmic expansion
\begin{equation}\label{logarithmic-expansion-here}
\partial = S + R = S + \left( f S + \sum_{j=1}^{n-1} g_j T_j\right),
\end{equation}
where $f,g_j \in \mathfrak{m}$. Since $\{S, T_1, \dots, T_{n-1}\}$ constitutes a basis, the condition that $S(x^\um) \in I$ for all $\um \in \supp(\partial)$ is equivalently formulated as
$$
S(f), S(g_1), \dots, S(g_{n-1}) \in I.
$$
\end{remark}
We shall mainly be interested in the case where the invariant ideal $I$ in the definition \ref{def-nf-invideal} is precisely the Bruno ideal $B(\partial)$ of $\partial$. Let us show that in this case the induced derivation $\partial_A$ in the quotient algebra 
\begin{equation}\label{algebraA}
A=\tO/B(\partial)
\end{equation}
assumes a very special form.
\begin{corollary}\label{cor:coeffsinB}
  \label{Corollary-coeffinB}Suppose that $\partial$ is in normal form modulo
  $B (\partial)$, where $B (\partial)$ is the Bruno ideal of $\partial$. Then, referring to the expansion (\ref{logarithmic-expansion-here}), 
  we can write
  \[ \partial_A = (1 + f_0) S_A, \]
  where $A$ is given by (\ref{algebraA}) and $f_0 \in \Gr_0(\tO, S)$ is the degree 0 component of $f$ with respect to the graduation (\ref{graduation-S}). In particular, the elements
  \[ f - f_0, g_1, \ldots, g_{n - 1} \]
  belong to the Bruno ideal. 
\end{corollary}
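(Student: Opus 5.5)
The plan is to compute the Bruno ideal directly from the Jordan decomposition modulo $B(\partial)$, and then read off the coefficients. Write $A=\tO/B(\partial)$. Since $\partial$ is in normal form modulo $B(\partial)$, Definition~\ref{def-nf-invideal} and Proposition~\ref{prop:normalform-SR-invariant} give that $B(\partial)$ is $S$- and $R$-invariant, with $(\partial_{\tmop{ss}})_A=S_A$ and $(\partial_{\tmop{nilp}})_A=R_A$. By property (D.1) applied to $S$ and $\partial_{\tmop{ss}}$, both of which preserve $B(\partial)$, we have $\partial_{\tmop{ss}}-S\in B(\partial)\tcD$. Similarly $\partial_{\tmop{nilp}}-R\in B(\partial)\tcD$.

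Next I will show that $S\wedge R\equiv \partial_{\tmop{ss}}\wedge\partial_{\tmop{nilp}}$ modulo $B(\partial)\,(\tcD\wedge\tcD)$. To see this, write $\partial_{\tmop{ss}}=S+E_1$ and $\partial_{\tmop{nilp}}=R+E_2$ with $E_i\in B(\partial)\tcD$. Expanding the wedge product, the cross terms $S\wedge E_2$, $E_1\wedge R$ and $E_1\wedge E_2$ all have coefficients in $B(\partial)$. Consequently $\Gamma(S\wedge R)\subset \Gamma(\partial_{\tmop{ss}}\wedge\partial_{\tmop{nilp}})+B(\partial)=B(\partial)$.

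Now I compute $\Gamma(S\wedge R)$ in the logarithmic basis $\{S,T_1,\dots,T_{n-1}\}$. From \eqref{logarithmic-expansion-here}, $S\wedge R=\sum_j g_j\,S\wedge T_j$. The elements $S\wedge T_j$ are part of the free basis $\{B_i\wedge B_j\}$ of $\tcD\wedge\tcD$ built from that logarithmic basis, so $\Gamma(S\wedge R)=\langle g_1,\dots,g_{n-1}\rangle$. Hence every $g_j$ lies in $B(\partial)$. It follows that $\partial_A=S_A+R_A=(1+f_A)S_A$ in $A$.

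Finally I need to replace $f$ by $f_0$. Remark~\ref{remark-S(f)} gives $S(f)\in B(\partial)$. Decompose $f=\sum_\alpha f_\alpha$ along the graduation \eqref{graduation-S}. Since $B(\partial)$ is $S$-invariant and $S$ is diagonal, I would argue that $B(\partial)$ is a graded ideal. One way is to note that it is closed under the projections onto the graded pieces, which can be realized jet by jet as polynomials in $S$ acting on the finite-dimensional spaces $\mathcal J^k$, combined with the fact that ideals of $\tO$ are closed in the Krull topology. Then $S(f)=\sum_\alpha \alpha f_\alpha\in B(\partial)$ implies $\alpha f_\alpha\in B(\partial)$ for each $\alpha$, so $f_\alpha\in B(\partial)$ for $\alpha\neq0$. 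Summing, $f-f_0\in B(\partial)$, again using closedness, and therefore $\partial_A=(1+f_0)S_A$.

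I expect this last step, the gradedness and closedness argument that makes it legitimate to pass from $S(f)\in B(\partial)$ to $f-f_0\in B(\partial)$, to be the only delicate point.
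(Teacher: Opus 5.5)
\textbf{There is a genuine gap, and it cannot be closed under the stated hypothesis: as written, the statement is false.}

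The step that fails is your first application of (D.1). From $(\partial_{\tmop{ss}})_A=S_A$, property (D.1) only gives $\partial_{\tmop{ss}}-S\in B(\partial)\tmop{Der}(\tO)$. Writing $\partial_{\tmop{ss}}-S=\sum_i e_i\,x_i\frac{\partial}{\partial x_i}$, you learn $e_ix_i\in B(\partial)$, not $e_i\in B(\partial)$. The two agree only if every $x_i$ is a non-zero-divisor modulo $B(\partial)$. Since $B(\partial)$ is a coefficient ideal in the \emph{logarithmic} module $\tcD\wedge\tcD$, your congruence $S\wedge R\equiv\partial_{\tmop{ss}}\wedge\partial_{\tmop{nilp}}$ modulo $B(\partial)(\tcD\wedge\tcD)$ does not follow.

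The conclusion is in fact false. Take $n=2$, $S=x\xx$ and
\[
\partial=x\xx+\Big(xy^2+\frac{y^3}{1+xy}\Big)\yy ,
\]
which the logarithmic change $(u,v)=(x,\,y/(1+xy))$ turns into $u\frac{\partial}{\partial u}+v^3\frac{\partial}{\partial v}$. Hence $\partial_{\tmop{ss}}=x\xx+xy^2\yy$ and $\partial_{\tmop{nilp}}=\frac{y^3}{1+xy}\yy$. Then $\partial_{\tmop{ss}}\wedge\partial_{\tmop{nilp}}=\frac{y^2}{1+xy}\,x\xx\wedge y\yy$, so $B(\partial)=\langle y^2\rangle$. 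This ideal is $S$-invariant, and $\partial_{\tmop{ss}}-S=xy^2\yy$ maps $\tO$ into $\langle y^2\rangle$. So $\partial$ is in normal form modulo $B(\partial)$ in the sense of Definition~\ref{def-nf-invideal}.

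Yet with $T_1=y\yy$ one gets $f=0$ and $g_1=xy+\frac{y^2}{1+xy}\notin B(\partial)$. Indeed $\partial_{\tmop{ss}}-S=xy\cdot y\yy\notin B(\partial)\tcD$. With $T_1=x\xx+y\yy$ instead, $f=-(xy+\frac{y^2}{1+xy})$, and then $S(f)$ and $f-f_0$ are not in $B(\partial)$ either. So your appeal to Remark~\ref{remark-S(f)} breaks for the same reason. The identity $\partial_A=(1+f_0)S_A$ itself does hold in this example; it is the ``in particular'' that fails.

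The paper's own proof has the same defect. It passes from $\partial_A=(1+h)S_A$ to ``$\sum g_jT_j\in B(\partial)\tcD$'' in exactly the same way, and Remark~\ref{remark-S(f)} also conflates $I\tmop{Der}(\tO)$ with $I\tcD$.

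What your argument does establish is the statement under the logarithmic reading of the hypothesis, $\partial_{\tmop{ss}}-S\in B(\partial)\tcD$. This is automatic when every $x_i$ is a non-zero-divisor modulo $B(\partial)$. In that case:
\begin{enumerate}
\item $\partial_{\tmop{nilp}}-R=-(\partial_{\tmop{ss}}-S)$ comes for free.
\item Your wedge comparison gives $g_j\in B(\partial)$.
\item You should derive $S(f)\in B(\partial)$ directly rather than from the Remark. With $E=\partial_{\tmop{ss}}-S$, the identity $0=[S+E,R-E]$ gives $[S,R]=[S,E]-[E,R]\in B(\partial)\tcD$, because $B(\partial)$ is $S$- and $R$-invariant and the $x_i\frac{\partial}{\partial x_i}$ commute with $S$.
\end{enumerate}
In that setting your route is a genuine alternative to the paper's. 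You compare $S\wedge R$ with $\partial_{\tmop{ss}}\wedge\partial_{\tmop{nilp}}$ directly from the definition of $B(\partial)$, instead of passing through Lemma~\ref{lemma-(1+m)}. This is cleaner, and it isolates exactly what is needed: the congruence $\partial_{\tmop{ss}}\equiv S$ modulo $B(\partial)\tcD$. Your gradedness and Krull-closedness argument for the last step is fine.
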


\begin{proof}
  By the Lemma \ref{lemma-(1+m)}, we have
  \[ \partial_A = (1 + h) (\partial_{\tmop{ss}})_A = (1 + h) S_A, \]
  where $h$ is some element of $\tmop{Gr}_0 \left( \tO, \partial_{\tmop{ss}}
  \right) \cap \mathfrak{m}$.  This shows that
  \[ \sum g_j T_j \in B (\partial) \tcD. \]
  Since $\{ S, T_1, \ldots, T_{n-1} \}$ \ forms a basis of $\tcD$, we conclude that $g_j \in B
  (\partial)$. Moreover, $1 + h$ is congruent to $1 + f$ modulo $B
  (\partial)$. As we remarked above $S (f) = S (f - f_0) \in B (\partial)$,
  which implies that $f - f_0 \in B (\partial)$ (\footnote{Since the ideal $B(\partial)$ is $S$-invariant, it is also an $S$-graded ideal of $\tO$. Therefore, an element $h \in \tO$ belongs to $B(\partial)$ if and only if all its $S$-homogeneous components belong to $B(\partial)$. In other words, given the $S$-graded expansion
\[
h = \sum_{\alpha} h_{\alpha},
\]
we have $h \in B(\partial)$ if and only if $h_{\alpha} \in B(\partial)$ for each degree $\alpha$. Applying the derivation $S$ to this expansion yields
\[
S(h) = \sum_{\alpha} \alpha h_{\alpha}.
\]
Because $B(\partial)$ is a graded ideal, $S(h) \in B(\partial)$ if and only if its homogeneous components satisfy $\alpha h_{\alpha} \in B(\partial)$ for all $\alpha$. This holds if and only if $h_{\alpha} \in B(\partial)$ for all $\alpha \neq 0$, which is equivalently stated as $h - h_0 \in B(\partial)$.}). Therefore, $((1 + f) S)_A = (1 +
  f_0) S_A$.
\end{proof}
\subsection{Analyticity of the Bruno ideal and Corollary \ref{cor:lienarfoliation}}
Using the results of the previous subsection, we will show that the first statement of the Main Theorem is in fact consequence of the second one. 

More precisely, for an {\em analytic} derivation $\partial$, the analyticity of the Bruno ideal $B(\partial)$ holds if we assume that  $\partial$ is in normal form modulo $B(\partial)$ (see Definition \ref{def-nf-invideal}). 

As in the previous subsection, we assume that $\partial$ is an $S$-perturbation (Definition \ref{def-S-perturb}). However, we now require that the second term in the expansion \eqref{mon-expansion-S-pert},
$$R = \sum_{\Vert{}\um\Vert{} \ge 1} x^{\um} \, L(\lambda_{\um}),$$
is an \emph{analytic} derivation satisfying $\ord(R) \ge 1$. Hence, since $S=L(\lambda)$, the derivation $\partial$ is a also an analytic.

We also fix an arbitrary logarithmic expansion as in \eqref{logarithmic-expansion-here} where now $f,g_j$ are analytic germs. Based on this expansion, we define the following analytic ideals, using the notation introduced in subsection \ref{subsec:derivfreemodule}:
\begin{itemize}\setlength{\itemsep}{0.4em}
  \item[(1)]The {\tmem{ideal of collinearity}} $I_1 := \Gamma (S \wedge R)$, which is generated
  by $g_1, \ldots, g_{n - 1}$.
  \item[(2)] The {\tmem{ideal of commutativity}} $I_2 := \Gamma ([S, R])$, which is generated
  by $S (f), S (g_1), \ldots, S (g_{n - 1}) .$
  \item[(3)] The ideal sum $W = I_1 + I_2$
 \item[(4)] The differential closure $I = S [W]$ of $W$ with respect to the derivation $S$ (in the ring $\OO$). 
\end{itemize}
where we recall that, in a ring $R$, the {\em differential closure} of an ideal $J\subset R$ with respect to a derivation $\delta$ is the smallest ideal $\delta[J]\subset R$ containing $I$ which is stable by $\delta$. 
 \begin{proposition}\label{prop-analyticity-Bruno-ideal}
  Suppose that $\partial$ is in normal form modulo $B(\partial)$. Then $B
  (\partial) = I \tO$. In particular $B (\partial)$ is an analytic ideal.
\end{proposition}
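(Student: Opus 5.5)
We prove the two inclusions $I\tO \subset B(\partial)$ and $B(\partial)\subset I\tO$. Throughout, set $A=\tO/B(\partial)$ and use the logarithmic expansion \eqref{logarithmic-expansion-here}, $\partial = S + fS + \sum_j g_j T_j$, with $f,g_j$ analytic.

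\emph{First inclusion.} By Corollary~\ref{cor:coeffsinB}, the series $g_1,\ldots,g_{n-1}$ and $f-f_0$ lie in $B(\partial)$. Since $S(f)=S(f-f_0)$ and $B(\partial)$ is $S$-invariant (Definition~\ref{def-nf-invideal}), we also get $S(f)\in B(\partial)$ and $S(g_j)\in B(\partial)$. Hence $W\tO\subset B(\partial)$. The ideal $B(\partial)$ is $S$-stable and contains $W$, so it contains every element obtained from $W$ by iterating $S$. The differential closure $I=S[W]$ is the ideal generated by $\{S^k(w): w\in W, k\ge 0\}$, so $I\subset B(\partial)\cap\OO$ and therefore $I\tO\subset B(\partial)$.

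\emph{Second inclusion.} We apply Lemma~\ref{Lemma-goestoquotient} with the ideal $J=I\tO$. First, $J$ is $S$-invariant, because $I$ is $S$-stable in $\OO$ and $S$ acts on $\tO$ by the same formula. Since $W$ is generated by the coefficients of $S\wedge R$ and of $[S,R]$, in the quotient $C=\tO/J$ we have $R\equiv fS$ modulo $J\tcD$, and $S(f)\in J$. It follows that $J$ is $R$-invariant: for $h\in J$,
\[
R(h)\equiv fS(h)\in J .
\]
So $J$ is $\partial$-invariant, and $[S_C,R_C]=0$.

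Now take the pair $(S,R)$ in Lemma~\ref{Lemma-goestoquotient}. The operator $S$ is semi-simple and $R$ is nilpotent. The hypothesis $[S,R]=0$ of the lemma, however, holds only modulo $J$. For this reason we use the relative version, Lemma~\ref{Lemma-relative-version}, with the chain $J\subset J$. Its conditions are that $S_C$ is semi-simple, $R_C$ is nilpotent, $[S_C,R_C]=0$, and $\partial_C=(S+R)_C$; the first three hold on $C$ by the previous paragraph. The lemma then gives $(\partial_{\tmop{ss}})_C=S_C$ and $(\partial_{\tmop{nilp}})_C=R_C$. These identities say that $\partial_{\tmop{ss}}\equiv S$ and $\partial_{\tmop{nilp}}\equiv R$ modulo $J\tcD$. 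Taking wedge products,
\[
\partial_{\tmop{ss}}\wedge\partial_{\tmop{nilp}} \equiv S\wedge R \pmod{J\,(\tcD\wedge\tcD)} .
\]
The coefficients of $S\wedge R$ are the $g_j$, which lie in $W\subset J$. Hence every coefficient of $\partial_{\tmop{ss}}\wedge\partial_{\tmop{nilp}}$ lies in $J$, that is, $B(\partial)\subset I\tO$.

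\emph{Main obstacle.} The delicate step is justifying that $S_C$ and $R_C$ really are the semi-simple and nilpotent parts on $C$. Semisimplicity of $S_C$ and nilpotency of $R_C$ pass to quotients by the relative jet construction of Remark~\ref{remark-invJordan}(4). Commutation holds on $C$ by construction. Uniqueness of the Jordan decomposition on each relative jet space $\mathcal{J}^k_C$ then yields the claim. Because of this, I would write the argument directly with jets rather than through Lemma~\ref{Lemma-relative-version}.

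Finally, since $I$ is generated by analytic germs, the equality $B(\partial)=I\tO$ shows that $B(\partial)$ is analytic.
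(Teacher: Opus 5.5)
Your proposal is correct and follows the paper's proof essentially step by step. The first inclusion comes from Corollary~\ref{cor:coeffsinB} together with the $S$-invariance of $B(\partial)$. The second comes from Lemma~\ref{Lemma-relative-version} applied on $C=\tO/I\tO$ with the trivial chain, followed by the same wedge-product comparison; you even check the $R$-invariance of $I\tO$, which the paper only asserts.

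One inference that both you and the paper take for granted is the passage from $(\partial_{\tmop{ss}})_C=S_C$ to $\partial_{\tmop{ss}}-S\in J\tcD$ in the \emph{logarithmic} module. Property (D.1) only gives $(\partial_{\tmop{ss}}-S)(x_i)=c_ix_i\in J$, not $c_i\in J$; think of $x_1\frac{\partial}{\partial x_1}$ modulo $J=(x_1)$. Your jet-level uniqueness argument closes this if you run it on the module $x_i\tO/x_iJ\cong C$. The derivations $S$, $R$ and $\partial_{\tmop{ss}}$ all act on this module, and $[S,R]$ acts by zero, so uniqueness gives $c_ix_i\in x_iJ$, hence $c_i\in J$, which is exactly what the wedge computation needs.
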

\noindent \begin{remark}
We note that this result imposes no arithmetic condition on the eigenvalues of \(S\).
\end{remark}
\begin{proof}
  It follows from Corollary \ref{cor:coeffsinB} that $g_1, \ldots, g_{n -
  1} \in B (\partial)$. From the Remark \ref{remark-S(f)} we also have $S
  (f), S (g_1), \ldots, S (g_{n - 1}) \in B (\partial)$. Therefore, $W \subset
  B (\partial)$. Since $B (\partial)$ is invariant by $S$, the differential
  closure $I = S [W]$ of $W$, as defined in the above item (4),  is still contained in $B (\partial)$. Therefore,
  $I \tO \subset B (\partial)$.
  
  We now prove the inverse inclusion
  \[ B (\partial) \subset I \tO . \]
  Note that $I$ is both $S$-invariant and $R$-invariant and therefore also
  $\partial, \partial_{\tmop{ss}}$ and $\partial_{\tmop{nilp}}$-invariant. By
  the definition of $I$, the derivations $S_C$ and $R_C$ commute in the
  quotient algebra $C = \tO / I \tO$. Since they are respectively semi-simple
  and nilpotent, it follows from Lemma \ref{Lemma-relative-version} (with $J=I$) that \
  \[ (\partial_{\tmop{ss}})_C = S_C, \quad \tmop{and} \quad
     (\partial_{\tmop{nilp}})_C = R_C. \qquad \]
  As a consequence, $\partial_{\tmop{ss}} - S$ and $\partial_{\tmop{nilp}} -
  R$ lie in the submodule $I \tcD$ (according to the general fact (D.1) of Section~\ref{subsec:derivfreemodule}). Therefore,
   \[ I \tO = (\Gamma (S \wedge R) + I)  \tO = \Gamma (\partial_{\tmop{ss}}
     \wedge \partial_{\tmop{nilp}}) + I \tO = B (\partial) + I \tO , \]
  which shows that $B (\partial)$ is contained in $I \tO$. 
\end{proof}
We conclude this section proving Corollary~\ref{cor:lienarfoliation}, which follows immediately from the above Proposition. 
\begin{proof}
    [Proof of Corollary~\ref{cor:lienarfoliation}]
Suppose that $\partial$ is in normal form modulo $B(\partial)$ and recall that $V(B(\partial))$ is an analytic invariant variety for $\partial$. Moreover, if we set $A=\mathcal O/ B(\partial)$ then 
$$\partial_A = (1+f_0)S_A,$$ 
for some analytic germ $f_0 \in Gr_0(\OO,S)$. Hence, in restriction $V(B(\partial))$, the foliation defined by $\partial$ is linear.
\end{proof}
\section{Formal reduction to normal form modulo the Bruno ideal}\label{sec:formalNormalizationModBruno}
The Proposition \ref{prop-analyticity-Bruno-ideal} from the last subsection implies that the Main Theorem will hold if we prove that, under the $\omega$-condition, any analytic derivation is analytically conjugated to a derivation in normal form modulo its Bruno ideal. 

To prove this result, we adapt the Newton-type inductive scheme introduced by Bruno. The main idea is to construct a sequence of coordinate changes that successively transform $\partial$ into a (relative) normal form, modulo truncations to jet spaces of increasing order. As in Newton’s method, the order of truncation is doubled at each step.

In this section, we describe how the relative normalization procedure works for formal derivations, for which (formal) convergence is straightforward to establish and requires no arithmetic assumptions. In particular, we adapt the normalization scheme of Bruno to the relative normalization. We use the formalism of the Jordan decomposition as well as results about operators on algebras presented in Section~\ref{sec:prelim}. Then, the main result in this chapter is Theorem~\ref{thm:formalnormalizationtruncated}, which shows that there is a formal automorphism that conjugates any formal derivation to one in normal form modulo its Bruno ideal.
\subsection{The adjoint operator}
The inductive reduction to normal form relies on the invertibility of the following adjoint operator, acting on $\tcD$,
$$\ad_{(1+f_0) S}(\cdot) = [\,(1+f_0)\, S,\, \cdot\,], $$
where $f_0$ is a series belonging to $\mathfrak{m}\cap \Gr_0(\tO, S)$. As a first remark, we observe that such operator restricts to a linear map
$$
\Gr_\alpha(\tcD, S) \rightarrow \Gr_\alpha(\tcD, S)
$$
for each $\alpha \in \C$, as it easily follows from (\ref{graduated-ids}). In other words, $\ad_{(1+f_0) S}$ is a graduated operator of degree $0$. 
The following result will play a crucial role in the formal and analytic reduction to a normal form.
\begin{lemma}\label{lemma-formula-inverse-adjoint}
Let $f_0$ be as above and suppose that $\alpha \in \C \setminus \{0\}$. Then, $\ad_{(1+f_0) S}$ restricts to an isomorphism of $\Gr_\alpha(\tcD, S)$. More precisely, its inverse is given explicitly by 
$$
\ad_{(1+f_0)S}^{\,-1}\, x^\um L(\mu)  = \frac{x^\um}{\alpha\,(1+f_0)} \; L(\mu) + \frac{x^\um\, L(\mu)(f_0)}{\alpha^2\,(1+f_0)^2}\; S,
$$ 
where $L(\mu)$ is any diagonal derivation and $x^\um$ denotes a monomial in $\Gr_\alpha(\tO, S)$. 
\end{lemma}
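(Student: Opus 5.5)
The plan is to show that $\ad_{(1+f_0)S}$ preserves $\Gr_\alpha(\tcD,S)$ and that the displayed formula is a two-sided inverse there. Write $u=1+f_0$, a unit with $S(u)=S(f_0)=0$ since $f_0\in\Gr_0(\tO,S)$.

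First compute $\ad_u S$ on a general element. For $\delta\in\Gr_\alpha(\tcD,S)$, the Leibniz rule for brackets gives
\[
[uS,\delta]=u[S,\delta]-\delta(u)\,S=\alpha u\,\delta-\delta(u)\,S .
\]
Here $[S,\delta]=\alpha\delta$, and $[uS,\delta]=u[S,\delta]-\delta(u)S$ holds because the term $S(\cdot)$ acting on coefficients is absorbed into $[S,\delta]$. Since $u\in\Gr_0$ and $\delta\in\Gr_\alpha$, the function $\delta(u)$ lies in $\Gr_\alpha(\tO,S)$, so $\delta(u)S\in\Gr_\alpha(\tcD,S)$.

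Next, apply this to the candidate $\delta=\frac{x^\um}{\alpha u}L(\mu)+\frac{x^\um L(\mu)(f_0)}{\alpha^2u^2}S$.
\begin{itemize}
\item Both summands lie in $\Gr_\alpha(\tcD,S)$. The coefficients are $x^\um$ times elements of $\Gr_0$: this uses $u^{-1}\in\Gr_0$, and $L(\mu)(f_0)\in\Gr_0$ because the diagonal $L(\mu)$ commutes with $S$ and hence preserves degrees. The diagonal fields $L(\mu)$ and $S$ both lie in $\Gr_0(\tcD,S)$.
\item For the first summand, $\alpha u\cdot\frac{x^\um}{\alpha u}L(\mu)=x^\um L(\mu)$. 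The correction term is $-\frac{x^\um}{\alpha u}L(\mu)(u)\,S=-\frac{x^\um L(\mu)(f_0)}{\alpha u}S$.
\item For the second summand $gS$, with $g=\frac{x^\um L(\mu)(f_0)}{\alpha^2u^2}\in\Gr_\alpha(\tO,S)$, we get $[uS,gS]=uS(g)S-gS(u)S=\alpha u g\,S$. This equals $\frac{x^\um L(\mu)(f_0)}{\alpha u}S$ and cancels the correction term exactly.
\end{itemize}
Hence $\ad_{uS}$ applied to the formula returns $x^\um L(\mu)$. By $\C$-linearity of both sides in $L(\mu)$, and by summing over the monomials of an element of $\Gr_\alpha(\tcD,S)$, the formula defines a right inverse on all of $\Gr_\alpha(\tcD,S)$. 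The summation converges in the Krull topology, since the formula maps order $\ge k$ terms to order $\ge k$ terms.

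It remains to prove injectivity on $\Gr_\alpha$, which gives bijectivity. Suppose $\alpha u\delta=\delta(u)S$. Expand $\delta=aS+\sum_j b_jT_j$ in a logarithmic basis completing $S$. Comparing the $T_j$ coefficients gives $\alpha u b_j=0$, hence $b_j=0$. Then $\delta=aS$, so $\delta(u)=aS(u)=0$, and $\alpha ua=0$ forces $a=0$ because $\alpha\neq0$ and $u$ is a unit.

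The only delicate point is the bookkeeping of the bracket sign convention and the verification that $L(\mu)(f_0)$ stays in degree $0$. Both are immediate from (\ref{Lie-bracket-formula}) and (\ref{graduated-ids}).
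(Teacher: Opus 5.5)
Your proof is correct and rests on the same identity as the paper's, namely $[(1+f_0)S,U]=\alpha(1+f_0)U-U(f_0)\,S$ for $U\in\Gr_\alpha(\tcD,S)$. The paper packages this as $\ad_{(1+f_0)S}=\alpha(1+f_0)(\mathbf{Id}-\mathbf{N})$ with $\mathbf{N}(U)=\frac{U(f_0)}{\alpha(1+f_0)}S$ and $\mathbf{N}^2=0$ (because $S(f_0)=0$), which gives bijectivity and the inverse $\frac{1}{\alpha(1+f_0)}(\mathbf{Id}+\mathbf{N})$ on all of $\Gr_\alpha(\tcD,S)$ at once, so your separate injectivity argument and monomial-by-monomial extension are not needed; your direct substitution also confirms the $+$ sign in the displayed formula, whereas the paper's proof text writes the inverse factor as $\mathbf{Id}-\mathbf{N}$, a sign slip for $(\mathbf{Id}-\mathbf{N})^{-1}=\mathbf{Id}+\mathbf{N}$.
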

\begin{proof}
This is an straightforward computation using the following decomposition 
\[
\ad_{(1+f_0)S}(U) = [(1+f_0)S,U] =  \alpha\,(1+f_0)\cdot \left( U - \frac{U(f_0)}{\alpha\,(1+f_0)} S \right),
\]
where $U \in \Gr_\alpha(\tcD, S)$. Note that the linear operator inside the parenthesis has the form $\mathbf{Id} - \mathbf{N}$ with $\mathbf{N}$ being the linear operator
$$
U \mapsto \mathbf{N}(U) = \frac{U(f_0)}{\alpha\,(1+f_0)} S,
$$
and, since $S(f_0) = 0$, we have $\mathbf{N}^2 = 0$ (i.e.~$\mathbf{N}$ is nilpotent of degree $2$). Furthermore, it commutes with the scalar operator of multiplication by $\alpha\, (1+f_0)$. Therefore, the inverse of $\ad_{(1+f_0)S}$ can be written as $\frac{1}{\alpha\,(1+f_0)}(\mathbf{Id} - \mathbf{N})$.  This is precisely the expression given in the enunciate.
\end{proof}
Let us state a simple corollary which will be used later. Given a derivation $\delta \in \tcD$, we denote by 
$$
\delta = \sum_\alpha\; \delta_\alpha
$$
its decomposition with respect to the graduation given in (\ref{graduation-S-der}), and let $\delta_\ast=\delta - \delta_0$ denote the sum of those graduated components of $\delta$ of non-zero degree.  
\begin{corollary}\label{cor-uniquesol-cohomological-eq}
(1) Given a derivation $\delta \in \tcD$, the equation
$$\big[ (1+f_0) S,U \big]=\delta_\ast$$
has a solution $U \in \tcD$. Moreover, such solution is unique if we assume that $U=U_\ast$. \\
\noindent (2) If we additionally assume that $\ord(\delta) \ge K$ for some $K \in \mathbb{N}$ (equivalently, $\delta \in \mathfrak{m}^{K} \tcD$), then the unique solution defined above also has order at least $K$. 
\end{corollary}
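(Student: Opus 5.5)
The idea is to decompose $\delta$ along the $S$-graduation (\ref{graduation-S-der}) and to invert $\ad_{(1+f_0)S}$ on each graded piece of non-zero degree, using Lemma~\ref{lemma-formula-inverse-adjoint}. Write the monomial expansion $\delta=\sum_{\um} x^{\um}L(\lambda_{\um})$. Grouping monomials by $\alpha=\langle\lambda,\um\rangle$ gives $\delta_\ast=\sum_{\alpha\neq 0}\delta_\alpha$, where each $\delta_\alpha$ is a (possibly infinite) sum of monomial derivations $x^{\um}L(\lambda_{\um})$ with $x^{\um}\in\Gr_\alpha(\tO,S)$. The candidate solution is
\[
U=\sum_{\langle\lambda,\um\rangle\neq 0}\left(\frac{x^{\um}}{\alpha_{\um}(1+f_0)}L(\lambda_{\um})+\frac{x^{\um}L(\lambda_{\um})(f_0)}{\alpha_{\um}^2(1+f_0)^2}S\right),\qquad \alpha_{\um}=\langle\lambda,\um\rangle .
\]

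First I check that this sum makes sense in $\tcD$. Each summand has order at least $\|\um\|$, because $1+f_0$ is a unit and $L(\lambda_{\um})(f_0)\in\mathfrak m$. For each $k$ there are only finitely many $\um$ with $\|\um\|\le k$, so the series converges in the Krull topology.

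Next I verify the equation and the graded properties. Since $f_0\in\Gr_0(\tO,S)$, the derivation $L(\lambda_{\um})$ has degree $0$ and $S$ has degree $0$, each summand lies in $\Gr_{\alpha_{\um}}(\tcD,S)$. Hence $U=U_\ast$. By Lemma~\ref{lemma-formula-inverse-adjoint}, applying $\ad_{(1+f_0)S}$ to a summand returns $x^{\um}L(\lambda_{\um})$. The operator $\ad_{(1+f_0)S}$ is $\C$-linear and continuous for the Krull topology, since it lowers order by at most $0$ (it does not decrease order, as $\ord((1+f_0)S)\ge 0$ and the bracket is bilinear). Therefore it commutes with the convergent sum, and we get $[(1+f_0)S,U]=\delta_\ast$.

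For uniqueness, suppose $U'=U'_\ast$ is another solution. Then $V=U-U'$ satisfies $[(1+f_0)S,V]=0$ and $V=V_\ast$. Since $\ad_{(1+f_0)S}$ preserves each $\Gr_\alpha(\tcD,S)$, each component satisfies $\ad_{(1+f_0)S}(V_\alpha)=0$. Injectivity on $\Gr_\alpha$ for $\alpha\ne0$ (Lemma~\ref{lemma-formula-inverse-adjoint}) then gives $V_\alpha=0$ for every $\alpha\neq 0$, so $V=0$. A point to check carefully here is that the graded decomposition of a formal derivation really is a decomposition as a formal sum of components. This holds because each $\Gr_\alpha(\tcD,S)$ is closed and the components are obtained by partitioning the monomial support.

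For part (2), if $\ord(\delta)\ge K$, then every $\um$ in the support satisfies $\|\um\|\ge K$. By the order bound from the first step, each summand of $U$ has order at least $K$, so $\ord(U)\ge K$.

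The only mildly delicate step is justifying the passage of $\ad$ through the infinite sum, together with the convergence of the sum. Both reduce to the uniform order estimate $\ord(\text{summand})\ge\|\um\|$.
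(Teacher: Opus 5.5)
Your proposal is correct and follows essentially the paper's route. The paper treats (1) as an immediate consequence of Lemma~\ref{lemma-formula-inverse-adjoint}, inverting $\ad_{(1+f_0)S}$ on each graded piece $\Gr_\alpha(\tcD,S)$ with $\alpha\neq 0$, and reads off (2) from the explicit inverse formula, exactly as you do. Your extra checks supply details the paper leaves implicit: the Krull convergence of the sum of inverted monomial pieces, passing $\ad_{(1+f_0)S}$ through that sum, and uniqueness via the disjoint monomial supports of the graded components.
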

The second part of the enunciate is an immediate consequence of the explicit form $\ad_{(1+f_0)S}^{\,-1}$ given by Lemma~\ref{lemma-formula-inverse-adjoint}.
\subsection{The inductive formal reduction}
From now on, we will adopt the following notation. We denote by $\mathfrak{m}^{(k)} = \mathfrak{m}^{\,2^{k}}$the $2^{k}$-st power of the maximal ideal, for $k \in \mathbb{N}$, and let
$$
\mathcal{J}^{(k)} = \mathcal{O} \big/ \mathfrak{m}^{(k)} .
$$
denote the corresponding jet space. Given a derivation $\partial$, we denote by
$$\partial^{(k)}\in \tmop{Der}\left( \tO \right)$$  
its {\em $\mathfrak{m}^{(k)}$-truncation}, i.e.~the derivation on the quotient algebra $\mathcal{J}^{(k)}$. Also, we define the {\em $\mathfrak{m}^{(k)}$-truncated Bruno ideal of $\partial$ by}
$$B(\partial^{(k)})= \Gamma (\partial_{\tmop{ss}}^{(k)} \wedge
     \partial_{\tmop{nilp}}^{(k)}).$$
Note that, from Remark~\ref{remark-invJordan}.(3)  we conclude that  
$$B(\partial^{(k)}) = B (\partial) / \mathfrak{m}^{(k)}.$$
In other words, the definition of the Bruno ideal {\em commutes} with the operation of truncation modulo $\mathfrak{m}^{(k)}$.  

The reduction to the formal normal form modulo its Bruno ideal will be carried out inductively with respect to $k$. Consequently, we assume that the following induction hypothesis holds for a given $k\in \mathbb{N}$:

\vspace{0.5cm}
\noindent {\bf Hypothesis $(\tmmathbf{H}_k)$ } $\partial\ku$ is in normal form modulo $B (\partial^{(k)})$.
\vspace{0.5cm}

\noindent We recall that this hypothesis corresponds to the following two requirements: \
\begin{description}
  \item[$(\tmmathbf{H}_k^{(1)})$] $B (\partial^{(k)})$ is $S$-invariant, and
  \item[$(\tmmathbf{H}_k^{(2)})$] We have the equality
  \[ (\partial^{(k)}_{\tmop{ss}})_{A^{(k)}} = S_{A^{(k)}}, \]
  in restriction to the quotient algebra $A^{(k)} = \mathcal{J}^{(k)} / B(\partial^{(k)}) .$
\end{description}
It follows from Corollary \ref{Corollary-coeffinB} that we can write
\begin{equation}
  (\partial^{(k)})_{A^{(k)}} = (1 + f_0^{(k)}) S_{A^{(k)}},
  \label{Aktruncation}
\end{equation}
for some $f_0  \in \tmop{Gr}_0 \left( \vO, S \right)$. Here $f_0^{(k)}
$ can be seen as the polynomial part of degree at most $2^k$ in the power series expansion of
$f_0$. \ Using this notation, let us consider the {\tmem{next}} quotient
algebra
\[ A^{(k + 1)} = \mathcal{J}^{(k + 1)} / B (\partial^{(k + 1)}), \]
associated to the Bruno ideal of $\partial^{(k + 1)}$, seen as a derivation in
the jet space $\mathcal{J}^{(k + 1)}$.  

It follows from (\ref{Aktruncation})
that the restriction of $\partial^{(k + 1)}$ to $A^{(k + 1)}$ can be written
as \ (\footnote{We have $B  (\partial^{(k + 1)}) / \mathfrak{m}^{(k)} = B
(\partial^{(k)})$, and therefore
\[     ({\mathcal{J}^{(k + 1)} / \mathfrak{m}^{(k)}})  \Big/ ( {B  (\partial^{(k + 1)}) /
   \mathfrak{m}^{(k)}}) = \mathcal{J}^{(k)} / B (\partial^{(k)}). \]
\ })
\begin{equation}\label{definition-of-W}
 (\partial^{(k + 1)})_{A^{(k + 1)}} = ((1 + f_0^{(k)}) S + W)_{A^{(k + 1)}}, 
\end{equation}
where $W$ is a polynomial derivation such that 
$$\ord(W) \ge 2^k, \quad \text{and}\quad \deg(W) \le 2^{k+1},$$
according to the definition in subsection \ref{subsect-ideals-expansions}. As previously, we write 
$$W = W_0 + W_\ast,$$
where $W_0$ is the degree $0$ component of $W$ with respect to the $S$-graduation. 
Let $U_k$ be the derivation satisfying the equations 
\begin{equation}\label{eq:homological}
    \big[ (1 + f_0) S, U_k \big] = - W_{\ast},\qquad U_k = (U_k)_\ast,
\end{equation}
whose existence and unicity has been established in Corollary \ref{cor-uniquesol-cohomological-eq}. The following result constitutes the key step in the inductive procedure.   
\begin{proposition}\label{prop:formalinductivestep}
Consider the automorphism $\Phi = \exp (U_k)$. Then, the conjugated
  derivation
  \[ \delta = \Phi\, \partial\, \Phi^{- 1}, \]
  is such that its truncation $\delta^{(k + 1)}$ satisfies hypothesis $(\tmmathbf{H}_{k +
  1})$. 
\end{proposition}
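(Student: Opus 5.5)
The plan is to compute $\delta^{(k+1)}$ explicitly modulo $\mathfrak{m}^{(k+1)}$ and then verify $(\tmmathbf{H}_{k+1})$ using Lemma~\ref{Lemma-relative-version} rather than computing the Jordan decomposition of $\delta^{(k+1)}$ directly.

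The first step is the conjugation formula. Since $\ord(W)\ge 2^k$, Corollary~\ref{cor-uniquesol-cohomological-eq}(2) gives $\ord(U_k)\ge 2^k$. In (\ref{conjugation-morphism}), every term $\ad_{U_k}^j(\partial)$ with $j\ge 2$ has order at least $2^{k+1}$, and so does $[\,\partial-(1+f_0)S,\,U_k]$, because $\partial-(1+f_0)S$ has order at least $1$ modulo the ideal. The key point is that $\ord([X,Y])\ge \ord X+\ord Y$ for logarithmic derivations, so these terms vanish after $\mathfrak{m}^{(k+1)}$-truncation. Using (\ref{definition-of-W}), I therefore expect
\[
\delta^{(k+1)} \equiv \big((1+f_0^{(k)})S + W + [(1+f_0)S,U_k]\big)^{(k+1)} = \big((1+f_0^{(k)})S + W_0\big)^{(k+1)}
\]
modulo $B' := \Phi^{\ast}B(\partial^{(k+1)})$, which equals $B(\delta^{(k+1)})$ by Lemma~\ref{lemma-pull-back-B} and the fact that truncation commutes with the Bruno ideal. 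The substitution of $f_0$ by $f_0^{(k)}$ is harmless, since their difference has order above $2^k$ and multiplies $U_k$ of order at least $2^k$.

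Next I will apply Lemma~\ref{Lemma-relative-version} with $S$, with $R:=(f_0^{(k)}S+W_0)$ truncated, with $I=B'$, and with $J=\mathfrak{m}^{(k+1)}$. Because $W_0$ and $f_0^{(k)}$ lie in $\Gr_0$, the derivation $R$ commutes with $S$; it is nilpotent because it has order at least $1$; and $S$ is semi-simple. Hypotheses (a) and (c) are then immediate, and (b) is the congruence obtained above. Once (b) is checked, the lemma yields $(\delta^{(k+1)}_{\tmop{ss}})_{A}=S_A$ with $A=\mathcal{J}^{(k+1)}/B'$, which is $(\tmmathbf{H}^{(2)}_{k+1})$. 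The same lemma gives the $S$-invariance of $B'$, which is $(\tmmathbf{H}^{(1)}_{k+1})$.

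The main obstacle is the first step, specifically that the equality (\ref{definition-of-W}) holds only modulo $B(\partial^{(k+1)})$ and not in $\mathcal{J}^{(k+1)}$ itself. Conjugation by $\Phi$ moves this ideal to $B'$, and I must check that the error terms in the conjugation formula remain inside $B'\tcD$ plus $\mathfrak{m}^{(k+1)}$. My approach is to write $\partial=(1+f_0^{(k)})S+W+E$ with $E\in B(\partial^{(k+1)})\tcD$ and to observe that $\Phi(E\text{-terms})\Phi^{-1}$ lands in $B'\tcD$. I also have to control $[E,U_k]$. For this I would use that $B(\partial^{(k+1)})$ has order at least $1$, together with $\ord(U_k)\ge 2^k$, to argue that $[E,U_k]$ lies in $B\tcD+\mathfrak{m}^{(k+1)}\tcD$. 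Here one needs $U_k$ to preserve $B$ modulo high order, which I would get from the $S$-gradedness of $B$ and a comparison of $B(\partial^{(k+1)})$ modulo $\mathfrak{m}^{(k)}$ with the $S$-invariant ideal $B(\partial^{(k)})$ from $(\tmmathbf{H}_k^{(1)})$.
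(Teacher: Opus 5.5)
Your overall route matches the paper's. You conjugate, use $\ord(U_k)\ge 2^k$ and the bracket equation to reduce $\delta^{(k+1)}$ to $(1+f_0^{(k)})S+W_0$ modulo $B(\delta^{(k+1)})$, and then conclude with the Jordan-decomposition lemma in the jet algebra. Your use of Lemma~\ref{Lemma-relative-version} with $J=\mathfrak{m}^{(k+1)}$ is just the paper's use of Lemma~\ref{Lemma-goestoquotient} made explicit. The trouble is how you handle the error term coming from $B=B(\partial^{(k+1)})$, where two steps would fail as written.

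\emph{First step.} The claim that $[\partial-(1+f_0)S,U_k]$ vanishes modulo $\mathfrak{m}^{(k+1)}$ is false. Since $\partial-(1+f_0)S$ has order only $\ge 1$, the bracket has order $\ge 2^k+1$, which is $<2^{k+1}$ for $k\ge 1$. Passing to ``modulo the ideal'' inside the bracket is not allowed, because $[bX,U_k]=b[X,U_k]-U_k(b)X$. The low-order part of $\partial-(1+f_0^{(k)})S$ is exactly the piece $E\in B\tcD$, and $[E,U_k]$ genuinely survives truncation.

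\emph{Proposed patch.} You want to show that $U_k$ preserves $B$ modulo high order, so that $[E,U_k]\in B\tcD+\mathfrak{m}^{(k+1)}\tcD$. This cannot work, for three reasons.
\begin{enumerate}
\item $B(\partial^{(k+1)})$ is not known to be $S$-graded; only its image $B(\partial^{(k)})$ is, by $(\mathbf{H}_k^{(1)})$.
\item The claim is false in general. If $U_k$ preserved $B+\mathfrak{m}^{(k+1)}$, so would $\exp(\pm U_k)$, giving $B'+\mathfrak{m}^{(k+1)}=B+\mathfrak{m}^{(k+1)}$. Since the proposition makes $B'$ $S$-invariant modulo $\mathfrak{m}^{(k+1)}$, $B$ would already be $S$-invariant before any coordinate change. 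That fails for $\partial=\Psi\partial_0\Psi^{-1}$ with $\partial_0=ix\xx-iy\yy+(xy-z^2)(x\xx+y\yy+z\zz)$, $\Psi=\exp(x^N z\zz)$ and $2^k\le N<2^{k+1}-2$. Indeed $(\mathbf{H}_k)$ holds there, since $\partial-\partial_0$ has order $\ge N$. But $B(\partial)=\langle xy-z^2e^{2x^N}\rangle$, and $S(xy-z^2e^{2x^N})=-2iNx^Nz^2e^{2x^N}\notin B+\mathfrak{m}^{(k+1)}$, because its lowest-degree term $x^Nz^2$ is not a multiple of $xy-z^2$.
\item Even if the claim held, it would place $[E,U_k]$ in $B\tcD$, whereas you need $B'\tcD$, and $B\neq B'$ modulo $\mathfrak{m}^{(k+1)}$ in general.
\end{enumerate}

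\emph{The fix.} Use the observation you already make, instead of expanding $E$ through the adjoint series. Write $\partial=D+E$ with $D=(1+f_0^{(k)})S+W$, and conjugate the two pieces separately.
\begin{itemize}
\item Since $\Phi(bX)\Phi^{-1}=\Phi(b)\,\Phi X\Phi^{-1}$, the whole block $\Phi E\Phi^{-1}$ lies in $\Phi(B)\tcD=B'\tcD$. Hence $[E,U_k]$ never needs to be examined.
\item The adjoint series is applied only to $D$. There $\ord(W)\ge 2^k$ and $\ord(f_0-f_0^{(k)})>2^k$ give $\Phi D\Phi^{-1}\equiv D+[(1+f_0)S,U_k]=(1+f_0^{(k)})S+W_0$ modulo $\mathfrak{m}^{(k+1)}\tcD$.
\end{itemize}
This is exactly the paper's argument, phrased through the induced isomorphism $A^{(k+1)}\to\Lambda^{(k+1)}$ and the identity $(\Phi\partial\Phi^{-1})_{\Lambda^{(k+1)}}=\Phi\,\partial_{A^{(k+1)}}\,\Phi^{-1}$.

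With that change, the rest of your argument goes through. That includes deriving hypothesis (b) of Lemma~\ref{Lemma-relative-version} from $\delta-(S+R)\in(B'+\mathfrak{m}^{(k+1)})\tcD$ together with the $\delta$-invariance of $B'+\mathfrak{m}^{(k+1)}$.
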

\begin{proof}
  It follows from Lemma~\ref{lemma-pull-back-B} that $B (\delta^{(k + 1)}) =
  \Phi^{\ast} B (\partial^{(k + 1)})$. Moreover, if we consider the quotient
  algebra ${\Lambda}^{(k + 1)} = \mathcal{J}^{(k + 1)} / B (\delta^{(k + 1)})$ with respect to Bruno ideal of $\delta$,  then we can
  write
  \begin{eqnarray*}
    (\delta^{(k + 1)})_{{\Lambda}^{(k + 1)}} & = & \Phi\, ((1 + f_0^{(k)}) S + W)_{A^{(k + 1)}}\, \Phi^{- 1}\\
    & = & (\Phi\, (1 + f_0^{(k)}) S\, \Phi^{- 1} + \Phi\, W\, \Phi^{-
    1})_{{\Lambda}^{(k + 1)}} \; .
  \end{eqnarray*}
  Since $\Phi = \exp (U_k)$ and $\ord(U_k) \ge 2^k$ (as it follows from Corollary~\ref{cor-uniquesol-cohomological-eq}.(2)), the following
  two equalities hold {\tmem{modulo $\mathfrak{m}^{(k + 1)}$}}, \
  \[ \Phi\, (1 + f_0) S\, \Phi^{- 1} = (1 + f_0^{(k)}) S + \big[ U_k, (1 +
     f_0^{(k)}) S \big] \qquad \tmop{mod} \quad \mathfrak{m}^{(k + 1)}, \]
  and
  \[ \Phi\, W\, \Phi^{- 1} = W \qquad \tmop{mod} \quad
     \mathfrak{m}^{(k + 1)}.\]
  Therefore, by the choice of $U_k$,
  \[ (\delta^{(k + 1)})_{{\Lambda}^{(k + 1)}} = ((1 + f_0^{(k)}) S + W_0)_{{\Lambda}^{(k + 1)}}. \]
  We now observe that $S$ {\em commutes} with $f_0^{(k)} S + W_0$. Moreover $S$ and $f_0^{(k)} S + W_0$ are respectively
  {\em semi-simple} and {\em nilpotent}. Hence, by Lemma \ref{Lemma-goestoquotient}, the
  ideal $B (\delta^{(k + 1)})$ is $S$-invariant and we have
  \[ S_{{\Lambda}^{(k + 1)}} = (\delta^{(k + 1)}_{\tmop{ss}})_{{\Lambda}^{(k + 1)}}. \]
  Therefore, the hypothesis \tmtextbf{$(H_{k + 1}^{(1)})$} and \tmtextbf{$(H_{k +
  1}^{(2)})$} hold for $\delta^{(k + 1)}$. 
\end{proof}
\begin{remark}\label{remark-truncated-bracket-equation}
\begin{enumerate}
    \item For later use, we note that, since the above proof only involves truncations
modulo \( \mathfrak{m}^{(k+1)} \), the same argument applies if \(U\) is replaced
by any solution of the following \emph{truncated bracket equation}:
\[
[(1+f_0)S,\, U_k] = -\,W_{\ast}
\qquad \mod \mathfrak{m}^{(k+1)}.
\]
In particular, \(U _k \) may be chosen to be a polynomial derivation of degree
strictly less than \(2^{k+1}\).  
\item  For the same reason, if \(U_k \) is assumed to be a polynomial solution of the
truncated equation above, we may replace the coordinate change defined by
\(\exp(U_k)\) with the {\em polynomial} coordinate change
\[
\varphi(x) = x + U_k(x),
\]
which corresponds to retaining only the first two terms of the exponential
series~\eqref{exponential-formula}. This is precisely the choice adopted in the
work of Bruno in \cite{Bruno1971_1972}.
\end{enumerate}
 
\end{remark}
Applying recursively the previous Proposition, we obtain the following {\em formal} normal form result:
\begin{thm}
   \label{thm:formalnormalizationtruncated}
		  Let $\partial$ be a formal derivation in $\tcD$. Then, there exist a formal automorphism $\Phi$ such that the conjugated derivation
          $$\delta= \Phi\, \partial\, \Phi^{-1}$$ 
          is in normal form modulo its Bruno ideal $B(\delta)$.
\end{thm}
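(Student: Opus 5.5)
The plan is to iterate Proposition~\ref{prop:formalinductivestep} infinitely many times and to show that the infinite composition of the resulting automorphisms converges in the Krull topology. First we reduce to an $S$-perturbation. After a preliminary linear change of coordinates (and, if necessary, passing from $\R$ to $\C$), we may assume that the linear part of $\partial$ is diagonalised up to nilpotent terms, so that $\partial_{\tmop{ss}}$ has linear part $S=L(\lambda)$. With the non-degeneracy assumption, the base case $(\tmmathbf{H}_0)$ is then immediate. Indeed, $\mathfrak{m}^{(0)}=\mathfrak{m}$, the jet space $\mathcal{J}^{(0)}$ is $\C$, and every derivation acts trivially there. If we want a less degenerate start, we can instead take the linear truncation, where $\partial^{(1)}$ is the linear part of $\partial$ and its Jordan decomposition is the usual one, with semi-simple part $S$ after the linear change.

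Inductively, suppose $\partial_k:=\Psi_k\partial\Psi_k^{-1}$ satisfies $(\tmmathbf{H}_k)$. We define $W$ as in \eqref{definition-of-W} and solve the homological equation \eqref{eq:homological} to obtain $U_k$ with $\ord(U_k)\ge 2^k$, using Corollary~\ref{cor-uniquesol-cohomological-eq}(2). We then set $\Psi_{k+1}=\exp(U_k)\Psi_k$. By Proposition~\ref{prop:formalinductivestep}, $\partial_{k+1}=\Psi_{k+1}\partial\Psi_{k+1}^{-1}$ satisfies $(\tmmathbf{H}_{k+1})$. The point to check here is that conjugating by $\exp(U_k)$ does not alter the $\mathfrak{m}^{(k)}$-truncation. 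This holds because $\ord(U_k)\ge 2^k\ge 1$, so $\exp(U_k)\equiv \mathbf{Id}$ and $[U_k,\cdot]$ raises order by at least $2^k$ on $\mathfrak{m}$-adic filtrations. Hence $(\tmmathbf{H}_k)$ is not destroyed; it is simply superseded by $(\tmmathbf{H}_{k+1})$, whose truncation refines it.

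Convergence is the next step. Since $\exp(U_k)(x_i)-x_i\in\mathfrak{m}^{2^k+1}$, the sequence $\Psi_k(x_i)$ is Cauchy in the Krull topology, and so $\Psi_k$ converges to a formal automorphism $\Phi$. The limit is invertible because its linear part is that of the first change. Moreover $\Phi\equiv\Psi_k \bmod \mathfrak{m}^{(k)}$ in the appropriate sense, so $\delta=\Phi\partial\Phi^{-1}$ has the same $\mathfrak{m}^{(k)}$-truncation as $\partial_k$ for every $k$. In particular $\delta^{(k)}=\partial_k^{(k)}$ satisfies $(\tmmathbf{H}_k)$ for all $k$.

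The remaining step, which is the main subtle point, is to pass from ``$\delta^{(k)}$ is in normal form modulo $B(\delta^{(k)})$ for all $k$'' to ``$\delta$ is in normal form modulo $B(\delta)$''. For this we use that the Bruno ideal and the Jordan decomposition commute with truncation, namely $B(\delta^{(k)})=B(\delta)/\mathfrak{m}^{(k)}$ and $(\delta_{\tmop{ss}})^{(k)}=(\delta^{(k)})_{\tmop{ss}}$.

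For $S$-invariance of $B(\delta)$, take $g\in B(\delta)$. Then $S(g)\in B(\delta)+\mathfrak{m}^{(k)}$ for all $k$, and by Krull's intersection theorem (ideals in $\tO$ are closed) we get $S(g)\in B(\delta)$.

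For the second condition, the same argument shows that $\delta_{\tmop{ss}}(x_i)-S(x_i)\in \bigcap_k\big(B(\delta)+\mathfrak{m}^{(k)}\big)=B(\delta)$ for each $i$. This gives $(\delta_{\tmop{ss}})_A=S_A$ with $A=\tO/B(\delta)$, which completes the proof.
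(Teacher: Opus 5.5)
Your proposal is correct and follows the same route as the paper. You iterate Proposition~\ref{prop:formalinductivestep}, use $\ord(U_k)\ge 2^k$ to get Krull convergence of the composed automorphisms, note that each truncation $\delta^{(k)}$ coincides with that of the $k$-th iterate and so satisfies $(\mathbf{H}_k)$, and pass to the limit. Your identity $\bigcap_k \big(B(\delta)+\mathfrak{m}^{(k)}\big)=B(\delta)$, from Krull's intersection theorem, makes explicit the step the paper only calls ``considering the (inductive) limit''. The preliminary linear change is superfluous, since a logarithmic derivation already has diagonal linear part $L(\lambda)$ and is therefore automatically an $S$-perturbation.
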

\begin{proof}
    For each $k \in \N$, consider the automorphism $\Phi_k = \exp(U_k)$, where $U=U_k$ is defined by the Proposition \ref{prop:formalinductivestep}. Since $\ord(U_k) \ge 2^k$, it follows from the exponential series (\ref{exponential-formula}) that we can write
    $$
    \Phi_k = \mathbf{Id} + \phi_k,
    $$
    where $\phi_k$ is an endomorphism mapping $\tO$ to $\mathfrak{m}^{(k)}$. Hence, the sequence of automorphisms
    $$
    \Phi_k\circ \cdots \circ \Phi_1,\quad k\in \N
    $$
    converges (with respect to the Krull topology) to a formal automorphism $\Phi$. 
    
    Moreover, the resulting derivation $\delta = \Phi\, \partial\, \Phi^{-1}$ is such that, for all $k\in \N$, its truncation $\delta^{(k)}$ satisfies the hypotheses $\mathbf{(H_k)}$.  Therefore, by considering the (inductive) limit, we conclude that $\delta$ is in normal form modulo $B(\delta)$.
\end{proof}
\section{Analytic reduction to normal form modulo the Bruno ideal}\label{sec:analyticreduction}
In this section, we prove that under the $\omega$-condition, the formal reduction to normal form constructed in the previous section is analytic, provided that $\partial$ is analytic. 

As we mentioned in the introduction, the key analytic estimates rely on the original ideas of Bruno, detailed in \cite{Bruno1971_1972}. However, we will reformulate these estimates in terms of the natural $S$-graduation of the space of vector fields given by (\ref{graduation-S-der}), in the spirit of Martinet's survey \cite{Martinet1981Norm}.

For the sake of clarity, the next subsection is devoted to establishing some basic general facts about majorant norms for analytic germs and analytic derivations.
\subsection{Analytic $r$-norms}\label{subsect-analytic-norms}
We denote by $D_r = \{\x\in \mathbb \C^n: |x_i|<r, \ i=1,\ldots, n\}$ polydisk of radius $r>0$, and by $\OO(D_r)$ the ring of series which are absolutely convergent on the closure $D_r$. In other words, a series  
	$$ f= \sum_{\um\in \N^n} a_{\um} \x^{\um}  $$
	belongs to $\OO(D_r)$ if its {\em $r$-majorant norm} (or simply {\em $r$-norm}) 
    $$\| f\|_r:=\sum_{\um \in \N^n} |a_{\um }| r^{\|\um\|}$$ 
    is finite, where we note $\|\um \| = \sum_{i=1}^n m_i$. Observe that each $f \in \OO(D_r)$ is an analytic function on $D_r$, and hence defines an analytic germ. Reciprocally, every analytic germ admits a representative in $\oO(D_r)$ for some $r>0$. We also have
    $$
    \| f \cdot g \|_r \le \| f \|_r \cdot \| g\|_r,
    $$ 
which implies that $\OO(D_r)$ is a Banach algebra with respect to the above norm. 

    Similarly, we define the $r$-norm for a logarithmic derivation $\partial$ as follows. We consider its monomial expansion (see \eqref{monomial-exp-der})
   $$
   \partial = \sum_{\um\in \N^n} \x^{\um} L(\lambda_{\um}),$$
   and set 
   $$
   \|\partial\|_r := \sum_{\um \in \N^n}r^{\|\um \|} \, \|\lambda_{\um}\|,
   $$
   where, as previously, we define $\| \lambda \| = \sum_{i=1}^n |\lambda_i|$. We denote by $\cD(D_r)$ the space of logarithmic derivations with finite $r$-norm. As above, we observe that each $\partial \in \cD(D_r)$ defines a germ of analytic derivation, i.e.~an element of $\cD$.  We also have
 $$
    \| f \cdot \partial \|_r \le \| f \|_r \cdot \| \partial \|_r,
 $$
 for all $f \in \oO(D_r)$ and $\partial \in \cD(D_r)$.  
\begin{remark}\label{remark-norm-logarithmic-components}
Consider the expansion of $\partial$ with respect to an arbitrary logarithmic basis $\{L(\mu_0),\ldots,L(\mu_{n-1})\}$  
$$\partial = \sum_{i=0}^{n-1} g_i\, L(\mu_i),$$
as in subsection \ref{subsect-ideals-expansions}. Then it follows from Remark~\ref{passage-between-expasions} that for every $r>0$, there exist two constants $0<c<d$ (depending only on $\mu_0,\ldots,\mu_{n-1}$) such that
   $$c\;  \|\partial \|_r\leq \sum_{i=0}^{n-1}\|g_j\|_r \, |\mu_j|  \leq d\; \|  \partial \|_r\ . $$
In particular,  $\partial \in \cD(D_r)$ if and only if $g_0,\ldots,g_{n-1} \in \oO(D_r)$.
 \end{remark}
As for the series, notice finally that for any $\partial \in \mathcal D$, there is some representative in $\mathcal D (D_r)$ for a small enough $r>0$.

    We now list some useful properties of the $r$-norm.
	\begin{enumerate}
		\item Let  $\varphi=(\varphi_1, \ldots, \varphi_n ) \in \oO(D_\rho)^n$ be such that $\max \{\|\varphi_1\|_\rho, \ldots, \|\varphi_n\|_\rho\}\le r$ for some $\rho,r>0$. Then, $\varphi(D_\rho) \subseteq D_r$ and the composed function $f \circ \varphi=f(\varphi_1,\ldots, \varphi_n)$ satisfies
	$$
	\| f \circ \varphi \|_\rho \le \| f \|_r,
	$$
	for all functions $f \in \oO(D_r)$. 
		\item Suppose that $\ord(f) \ge k$ (i.e.~that $f\in \mathfrak{m}^{k}$), and let $0<\rho<r$. Then, we have the following relation between the $\rho-$norm and the $r-$norm.
		$$  \| f\| _\rho = \sum_{\um \in \N^n} |a_{\um}| \rho^{\| \um\| } = \sum_{\um \in \N^n} |a_{\um}| \left(\frac{\rho}{r}\right)^{\| \um\| }  r^{ \|m\| } \leq \left(\frac{\rho}{r}\right)^{  k } \| f\|_ r, $$
        where the last inequality follows from the fact that $a_{\um}$ vanishes for $\|\um\| < k$.
		\item Similarly for $\partial \in \cD(D_r)$ a derivation such that $\ord(\partial)\ge k$ and $0<\rho<r$, we have
		$$ \|\partial\|_\rho\leq \left(\frac{\rho}{r}\right)^{  k } \| \partial \|_ r . $$
		\item  Given a monomial derivation $x^{\um} L(\lambda)$ and a monomial $x^{\un}$, the identity \eqref{Lie-deriv-monomial} gives the estimate
		\begin{equation}
		\| x^\um L(\lambda)\, x^\un \|_r \leq |\langle \lambda,\un\rangle |\,  \|x^{\um+\un}\|_r \leq \|\un\|\,  \|x^\um L(\lambda)\|_r\, \|x^\un\|_r,
		\end{equation}
		 where we have used the trivial inequality $|\langle u,v \rangle | \leq \|u\|\, \|v\|$.\\
		 Similarly, given two monomial derivations $x^{\um} L(\lambda),\, x^{\un} L(\mu)$, the identity \eqref{Lie-bracket-formula} yields the following {\em key estimate} for their Lie bracket:
	\begin{equation*}
\begin{split}
\bigl\|\, \big[\, x^{\um} L(\lambda),\, x^{\un} L(\mu) \,\big]\, \bigr\|_r
&\le
\|\lambda\|\, \|\un\|\, \bigl\| x^{\um+\un} L(\mu) \bigr\|_r \\
&\quad
+ \|\mu\|\, \|\um\|\, \bigl\| x^{\um+\un} L(\lambda) \bigr\|_r \\
&\le
\bigl(\|\un\| + \|\um\|\bigr)\,
\bigl\| x^{\um} L(\lambda) \bigr\|_r\,
\bigl\| x^{\un} L(\mu) \bigr\|_r \; .
\end{split}
\end{equation*}
       \item More generally, we consider two derivations $H,K \in \cD(D_r)$ with monomial expansions
        $$
        H = \sum_{\um\in \N^n} \x^{\um} L(\lambda_{\um}), \quad
        K = \sum_{\un\in \N^n} \x^{\un} L(\mu_{\un}).
        $$
        Then, the following estimate holds for the Lie bracket $[H,K]$, 
        $$
        \|\, [H,K]\, \|_r \le \sum_{\um,\un} (\|\underline{m}\| + \| \underline{n} \|)\; \|\x ^{\um} L(\lambda_{\um})\|_r \,\|\x ^{\un} L(\mu_{\un})\|_r.
        $$
        In particular, if we assume that $H$ and $K$ are both polynomial derivations (see \eqref{def-degree-order-deriv}), then
		\begin{equation*}
		\begin{split}
			 \| [H, K]  \|_r  \le (\, \deg(H) + \deg(L)\, )\;
			\|  H \|_r\, \|  K \|_r.
		\end{split}
	\end{equation*}   
	\item Similarly, if $f = \sum_{\um} a_\um x^\um$ is a polynomial function and $\partial \in \cD(D_r)$ is an arbitrary derivation then
	\[
	\| \partial\, \big(f)\|_r \le \deg(f)\,  \| \partial \|_r\, \|f\|_r.
	\]
	\end{enumerate}
We now derive a simple estimate for the flow maps $(t,x) \mapsto \Phi_\partial(t,x)$ associated with analytic derivations $\partial$.  This estimate will be used to control the majorant norm under coordinate changes defined by such flows. 

Assuming 
that $\partial \in \cD(D_r)$, we say that the flow $\Phi_\partial$ is {\em defined up to time 1} at a point $x\in \C^n$ if 
$$
\Phi_\partial(t,x) \in D_r,
$$
for all complex times $t \in \mathbb{D} = \{z \in \C : |z| \le 1\}$. 
\begin{lemma}
Suppose that there exists a constant $0 \leq c < r$ such that
\[ \| \partial \|_r \leq c / r.\]
Then, the flow of $\partial$ is defined up to time 1 for all points lying in the polydisk $D_{r-c}$ of radius $r-c$.
\end{lemma}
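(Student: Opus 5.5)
The plan is to work directly with the differential system defined by $\partial$ and to use an a priori bound on the speed together with a continuation argument. Writing $\partial=\sum_{i=1}^n f_i\, x_i\frac{\partial}{\partial x_i}$ with $f_i=\sum_{\um} a_{i,\um}x^{\um}$, the monomial expansion gives $\lambda_{\um}=(a_{1,\um},\ldots,a_{n,\um})$. Consequently
\[
\|\partial\|_r=\sum_{\um} r^{\|\um\|}\sum_{i=1}^n |a_{i,\um}| = \sum_{i=1}^n \|f_i\|_r .
\]
For every $x$ in the closed polydisk $\overline{D_r}$ and every $i$ we therefore get
\[
|x_i f_i(x)|\le r\,\|f_i\|_r\le r\,\|\partial\|_r\le c .
\]
So each component of the vector field $\partial$ has modulus at most $c$ on $\overline{D_r}$. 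Since $\partial\in\cD(D_r)$, the components $x_if_i$ are holomorphic on $D_r$, and by Cauchy--Lipschitz the complex flow $\Phi_\partial(t,x)$ exists locally and is holomorphic in $(t,x)$.

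To handle complex times, I fix $t\in\mathbb D$ and $x\in D_{r-c}$, and consider the real-time curve $\gamma(s)=\Phi_\partial(st,x)$ for $s\in[0,1]$. It solves $\gamma'(s)=t\,\partial(\gamma(s))$, whose components are bounded in modulus by $|t|\,c\le c$ as long as $\gamma$ stays in $\overline{D_r}$. Let $s^\ast$ be the supremum of those $s\le 1$ for which $\gamma$ is defined on $[0,s]$ with values in $D_r$. On $[0,s^\ast)$ the integral formula gives
\[
|\gamma_i(s)|\le |x_i|+\int_0^s |t|\,c\,d\sigma\le |x_i|+c\,s\le \max_j|x_j|+c<r .
\]
Hence $\gamma$ remains in the compact polydisk of radius $\max_j|x_j|+c<r$, which is strictly inside $D_r$. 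By the standard escape-from-compacts criterion for ODE solutions, the solution extends beyond $s^\ast$ whenever $s^\ast<1$, and it is defined at $s^\ast=1$ with the same bound. Thus $\Phi_\partial(t,x)\in D_r$ for every $t\in\mathbb D$, which is exactly the claim.

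The only delicate point is the strictness of the inclusion. The hypothesis $x\in D_{r-c}$ means $\max_j|x_j|<r-c$, and this is precisely what keeps the trajectory in a compact subset of the open polydisk $D_r$. The continuation argument needs such a compact subset, so this is where the strict inequality is used. The case $c=0$ is trivial: then $\partial=0$ and the flow is the identity. I also need that the flow along the segment $s\mapsto st$ agrees with the holomorphic flow $\Phi_\partial(t,x)$. This follows from uniqueness of solutions of $\dot y=\partial(y)$ restricted to the complex line through $0$ and $t$. I expect no further obstacle beyond identifying $\|\partial\|_r$ with $\sum_i\|f_i\|_r$ correctly.
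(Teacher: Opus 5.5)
Your proposal is correct and follows essentially the same route as the paper: bound each component $\partial(x_i)=x_if_i$ by $r\|\partial\|_r\le c$ on $D_r$, integrate along the segment $[0,t]$ to get $|\Phi_i(t,x)|\le |x_i|+|t|\,c<r$, and rule out leaving $D_r$ by a first-exit (continuation) argument. Your version only makes explicit two steps the paper leaves implicit: the reparametrization $s\mapsto st$ for complex times, and the escape-from-compacts step.
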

\begin{proof}
Let us note by $\Phi_1(t,x),..,\Phi_n(t,x)$ the $n$-components of the flow $\Phi = \Phi_\partial$ of $\partial$. We observe that, for all $x \in D_r$ and all sufficiently small $t \in \C$ (depending on $x$), each  component $\Phi_i$ of $\Phi$ satisfies the integral equation
\begin{equation}\label{integral-equation}
\Phi_i(t,x) = x_i + \int_{0}^{t} f_i(\Phi_i(s,x))) ds,
\end{equation}
where the integral is computed over segment $[0,t]$ and $f_i = \partial(x_i)$ is the function obtained by applying $\partial$, as a derivation, to the coordinate $x_i$. 
We now observe, using the monomial expansion of $\partial$, that
\[
 f_i =  \sum_{\um\in \N^n} \x^{\um} L(\lambda_{\um}) x_i = \left(\sum_{\um\in \N^n} \x^{\um}\cdot\lambda_{\um,i} \right) x_i, 
\]
where $\lambda_{\um,i}$ denotes the $i^{\mathrm{th}}$ component of $\lambda_\um$. As a consequence,
\[
\|f_i\|_r \le r\, \|\partial\|_r \le c.
\]
Suppose now that the initial point $x$ belongs to the poly-disk $D_{r-c}$. Then, assuming that $\Phi(t,x)$ is defined for $|t|\le T$, the equation \eqref{integral-equation} gives
$$
|\Phi_i(t,x)| \le |x_i| + T c.
$$
Now, suppose the contrary: there exists a time $t_0$ with $|t_0|\le 1$ such that the solution curve $t \mapsto \Phi(t,x)$ exits the polydisk $D_r$ at some first exit time $t_1 \in [0,t_0]$. This immediately contradicts the inequality above.
\end{proof}
The following Corollary is an immediate consequence of the Lemma and property (1). It will allow to estimate the norm of a function under the coordinate change given by a time-1 flow.
\begin{corollary}\label{corollary-inclusion-of-disks}
Let $\partial$ be as above and let $x \mapsto \varphi(x) = \Phi_\partial(1,x)$ denote the time-1 flow of $\partial$. Then
\[
\varphi(D_{r-c}) \subset D_r.
\]
\end{corollary}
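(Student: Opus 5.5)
The statement is a direct consequence of the preceding Lemma combined with property (1) of the $r$-norms, so the plan is simply to unpack both. Let $x\in D_{r-c}$. By the Lemma, the flow $t\mapsto \Phi_\partial(t,x)$ is defined up to time $1$, which by definition means $\Phi_\partial(t,x)\in D_r$ for every complex $t$ with $|t|\le 1$. Taking $t=1$ gives $\varphi(x)=\Phi_\partial(1,x)\in D_r$. Since $x\in D_{r-c}$ was arbitrary, this yields $\varphi(D_{r-c})\subset D_r$.

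The role of property (1) is to turn this set-theoretic inclusion into the norm estimate for composed functions that the authors announce before the Corollary. To invoke it, I would check that each component $\varphi_i=\Phi_{\partial,i}(1,\cdot)$ has finite $(r-c)$-norm bounded by $r$. The plan for this is to bound the Picard iterates of the integral equation \eqref{integral-equation} in majorant norm rather than pointwise.

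\begin{itemize}
\item Each $f_i=\partial(x_i)$ satisfies $\|f_i\|_r\le r\|\partial\|_r\le c$.
\item Inductively, one obtains $\|\varphi_i\|_{r-c}\le (r-c)+c=r$.
\item Property (1) then gives $\|g\circ\varphi\|_{r-c}\le \|g\|_r$ for every $g\in\oO(D_r)$.
\end{itemize}

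The only delicate point is the inductive step in the second item. The bound $\|f_i\circ\Phi^{(j)}\|\le\|f_i\|_r$ needs the previous iterate $\Phi^{(j)}$ to already have norm at most $r$. This is exactly what the induction hypothesis provides, and it is the majorant version of the pointwise exit-time argument used in the proof of the Lemma. For the stated inclusion itself, however, nothing beyond the Lemma is required.
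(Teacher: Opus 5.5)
Your proof of the inclusion is correct and is exactly the paper's (implicit) argument: for $x\in D_{r-c}$ the Lemma gives $\Phi_\partial(t,x)\in D_r$ for all $|t|\le 1$, and you take $t=1$. Your second part is not needed for the statement itself, but it is a sound addition: property (1) requires the majorant bound $\|\varphi_i\|_{r-c}\le r$, which the set inclusion alone does not give, and your Picard argument supplies it provided you take the majorant norm jointly in $(t,x)$, with $t$ on the closed unit disk.
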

\noindent As a consequence, for any function $f\in \oO(D_r)$, the property (1) gives the estimate
\begin{equation}\label{estimate-f-flow}
\| \exp(\partial)(f) \|_{r-c} \le \| f \|_r, 
\end{equation}
since the action of the exponential automorphism $f \mapsto \Phi(f) = \exp(\partial)(f)$ (see \eqref{exponential-formula}) is precisely given by $f \mapsto f \circ \varphi$.

We conclude this subsection by studying how the $r$-norm of a derivation behaves under the coordinate change defined by $\Phi=\exp(\partial)$. We recall from subsection~\ref{subsect-ideals-expansions} that, for a derivation $\delta$, we can write
$$
\Phi \, \delta \, \Phi^{-1} = \exp(\ad_\partial) \delta = \sum_{n=0}^\infty \frac{1}{n!} \ad_\partial^n\, (\delta),
$$
where $\ad_\partial(\delta) = [\partial,\delta]$.  

We are particularly interested in the case where $\partial$ is a polynomial derivation and $\delta = L(\mu)$ is a diagonal derivation.  
Therefore, we start by considering the action of the iterated adjoint map
$$
\ad_\partial^n = \ad_\partial \circ \cdots \circ \ad_\partial,\qquad n\in \N
$$
in such case. The next enunciates refer to the order and degree of $\partial$, as defined in \eqref{def-degree-order-deriv}.
\begin{lemma}\label{lem:rnormadjoint}
    Let $\partial$ be a polynomial derivation. Then,
    $$\deg\big( \ad_\partial^n(L(\mu))\big) \le n\cdot \deg(\partial)\qquad\text{and}\qquad \frac{1}{n!}\| \,\ad_\partial^n(L(\mu))\, \|_r\leq \, \Big[\deg(\partial) \, (\|\partial\|_r)\Big]^n \|\mu\|,$$
for all $\mu \in \C^n$.
\end{lemma}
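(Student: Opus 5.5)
The plan is an induction on $n$, using the Lie bracket estimate from property (5) of the list in subsection~\ref{subsect-analytic-norms}. For $n=0$ both claims are trivial: $\ad_\partial^0(L(\mu)) = L(\mu)$ has degree $0$ (its support is $\{0\}$), and $\|L(\mu)\|_r = \|\mu\|$ by the definition of the $r$-norm of a derivation.

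\emph{Degree bound.} Set $D_n := \ad_\partial^n(L(\mu))$, so that $D_{n+1} = [\partial, D_n]$ up to the sign convention for $\ad$. Expanding both derivations in monomials and applying \eqref{Lie-bracket-formula}, the bracket of $x^{\um}L(\lambda)$ and $x^{\un}L(\nu)$ is supported at $\um+\un$. Hence
\[
\supp([\partial, D_n]) \subset \supp(\partial) + \supp(D_n),
\]
and therefore $\deg(D_{n+1}) \le \deg(\partial) + \deg(D_n) \le (n+1)\deg(\partial)$.

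\emph{Norm bound.} Apply property (5) with $H=\partial$ and $K=D_n$; the degree of $K$ there should read $\deg(K)$, which is evidently what the typo $\deg(L)$ means. This gives
\[
\|D_{n+1}\|_r \le \big(\deg(\partial) + \deg(D_n)\big)\,\|\partial\|_r\,\|D_n\|_r \le (n+1)\deg(\partial)\,\|\partial\|_r\,\|D_n\|_r .
\]
Iterating from $\|D_0\|_r=\|\mu\|$ yields $\|D_n\|_r \le n!\,\big(\deg(\partial)\|\partial\|_r\big)^n\|\mu\|$. Dividing by $n!$ gives the claim.

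The step that needs the most care is the refinement for $n=0 \to 1$. In the monomial-wise bound of property (4), terms where $\un=0$ (the diagonal $L(\mu)$) contribute only $\|\mu\|\,\|\um\|$, so the coefficient is $\|\um\|+0 \le \deg(\partial)$, consistent with the general bound. A second point to check is that property (5) is valid as stated: the sum $\|\um\|+\|\un\|$ over supports is bounded by $\deg(H)+\deg(K)$, and since all terms are nonnegative, the resulting sums factor as $\|H\|_r\|K\|_r$. No further obstacle is expected, since everything is finite (polynomial) and the norms are finite for all $r>0$.
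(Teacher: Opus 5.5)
Your proposal is correct and is essentially the paper's own argument: a simultaneous induction on $n$ for the degree and norm bounds, using the support additivity coming from \eqref{Lie-bracket-formula} and the polynomial bracket estimate of property (5) with the factor $\deg(\partial)+\deg(\ad_\partial^{n}(L(\mu)))\le (n+1)\deg(\partial)$. You are also right to read $\deg(L)$ in property (5) as $\deg(K)$, and to note that the sign convention for $\ad_\partial$ does not affect the norm.
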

\begin{proof}
Let $d = \deg(\partial)$. Using the properties (4) and (5) listed above, we obtain 
$$
\deg\big( \ad_\partial(L(\mu))\big) \leq d, \qquad\text{and}\qquad \|\ad_\partial(L(\mu))\|_r \leq \, d\, \|\partial\|_r \, \|\mu\|.
$$
We now proceed by induction. Assuming that 
$$
\deg\big( \ad_\partial^{k-1}(L(\mu))\big) \le (k-1) d, \qquad\text{and}\qquad \|\ad_\partial^{k-1}(L(\mu))\|_r \leq \,(k-1)!\, d^{k-1}\, (\|\partial\|_r)^{k-1}\, \|\mu\|,
$$
the induction step follows immediately by applying again properties (4) and (5). This concludes the proof.
\end{proof}
The following Corollary is an immediate consequence of the Lemma and property (3):
\begin{corollary}\label{Corollary-estimate-deg-ord}
Let $\partial$ be a polynomial derivation. Then, for all $0<\rho<r$ and $n \in \N$, 
$$
\frac{1}{n!}\, \|  \ad_\partial^n (L (\mu))  \|_{\rho} \leq
\left[ \deg (\partial)  \left( \frac{\rho}{r}\right)^{\tmop{ord} (\partial)} \|\partial\|_r
\right]^n 
 \| \mu \|.
$$
\end{corollary}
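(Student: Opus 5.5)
The plan is to combine Lemma~\ref{lem:rnormadjoint} with property (3) of the list of $r$-norm properties, applied to each iterated bracket $\ad_\partial^n(L(\mu))$. The point is that the order of the iterated bracket grows linearly in $n$, so passing from the $r$-norm to the $\rho$-norm gains a factor $(\rho/r)^{\ord(\partial)}$ per iteration.

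The first step is an order estimate to accompany the degree estimate of the Lemma:
\[
\ord\big(\ad_\partial^n(L(\mu))\big)\ \ge\ n\cdot \ord(\partial).
\]
I would prove it by induction on $n$ using the bracket formula \eqref{Lie-bracket-formula}. The bracket of monomial derivations $x^{\um}L(\lambda)$ and $x^{\un}L(\mu')$ is supported on $x^{\um+\un}$, so $\ord([H,K])\ge \ord(H)+\ord(K)$ whenever both sides are defined. Since $L(\mu)$ has order $0$, each application of $\ad_\partial$ raises the order by at least $\ord(\partial)$. The case $n=0$ is trivial: the left side equals $\|\mu\|$ in every norm, because the $\rho$-norm of a diagonal derivation $L(\mu)$ is exactly $\|\mu\|$.

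The second step applies property (3) to $\delta=\ad_\partial^n(L(\mu))$, which is polynomial and hence lies in $\cD(D_r)$. This gives
\[
\|\delta\|_\rho\le \left(\frac{\rho}{r}\right)^{n\,\ord(\partial)}\|\delta\|_r .
\]
The third step divides by $n!$ and inserts the bound of Lemma~\ref{lem:rnormadjoint}, $\frac1{n!}\|\delta\|_r\le [\deg(\partial)\,\|\partial\|_r]^n\|\mu\|$. Regrouping the factor $(\rho/r)^{n\,\ord(\partial)}$ as the $n$-th power of $(\rho/r)^{\ord(\partial)}$ yields exactly the stated inequality.

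None of these steps is a real obstacle. The only point needing care is the order estimate. One must check that the bracket of monomial derivations never lowers the total exponent, which is clear from \eqref{Lie-bracket-formula} because $\um,\un\in\N^n$ and no negative exponents occur in logarithmic monomial expansions. One must also handle the degenerate case where some iterate vanishes: there the order is $\infty$ and the inequality holds trivially.
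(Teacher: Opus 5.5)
Your proposal is correct and follows the paper's own argument: the paper states that the corollary is an immediate consequence of Lemma~\ref{lem:rnormadjoint} together with property (3). The paper leaves implicit the fact that $\ord(\ad_\partial^n(L(\mu)))\ge n\,\ord(\partial)$, which is needed to apply property (3) with exponent $n\,\ord(\partial)$; you make this step explicit via \eqref{Lie-bracket-formula}, and your justification of it is sound.
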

\subsection{Bounds on $r$-norms under conjugation}	
The following technical lemma provides the first key ingredient in the induction procedure for analytic normalization. Its purpose is to control the norm of a derivation under a coordinate change defined by the flow of a polynomial vector field. In general, such a norm cannot be controlled on a polydisk of fixed radius, since it tends to {\em blow up} as one approaches the boundary. The central idea is therefore to work with three carefully chosen nested polydisks, under the constraint that their radii do not shrink too much.

We emphasize that this result is a direct consequence of several technical lemmas established in \cite{Bruno1971_1972}. For the sake of completeness, we state it separately and provide an independent proof.

Let $C\geq 1$ be a constant and $(\Omega_k)_k$ be some sequence of real numbers satisfying $\lim_{k \rightarrow \infty} \Omega_k = 1$, both the constant and the sequence to be specified later. For every $k\in \mathbb N$, let $0
< \rho_1 < r < \rho$ be three radii such that 
\begin{equation} \label{radius-assumptions}
	\frac{\rho_1}{r} = \left( \frac{1}{C 2^k} \right)^{C / 2^k}, \quad
	\frac{r}{\rho} = \left( \frac{1}{C 2^k} \right)^{C / 2^k} \Omega_k ,
\end{equation}
and let $U\in \cD(D_r)$ be a polynomial derivation satisfying the following conditions
\[ \ord (U) \geq 2^k, \quad \deg (U) \leq 2^{k + 1}, \quad \| U \|_r
\leq \frac{1}{2^k}. \]

\begin{lemma}\label{lemma-bound-expU}
  Under the above notation, there exists a $k_0 \in \mathbb{N}$ $($depending only on $C$ and the sequence $(\Omega_k)\, )$ such that,
  for all integers $k \geq k_0$, the following conditions hold:
  \begin{enumeratenumeric}
    \item $\varphi (D_{\rho_1}) \subset D_r$, where $\varphi$ is the time-1
    flow of $U$. 
    \item For an arbitrary function $f \in \oO (D_{\rho})$, \ \
    \[ \| \exp (U) f \|_{\rho_1} \leq \| f \|_{\rho}. \]
    \item If $R \in \cD (D_{\rho})$ is a derivation with $\ord (R) \geq
    1$ then
        \[ \| \exp (\tmop{ad}_U) R \|_{\rho_1} \leqslant \left( 1 - \frac{C k}{2^k}
   \right)  \| R \|_{\rho}. \]
 \end{enumeratenumeric}
\end{lemma}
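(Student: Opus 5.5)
We will prove the three items in order, each resting on the flow lemma, Corollary~\ref{corollary-inclusion-of-disks}, and the iterated-adjoint estimate of Corollary~\ref{Corollary-estimate-deg-ord}. Throughout we write $\epsilon_k = (C2^k)^{-C/2^k}$, so that $\rho_1 = \epsilon_k r$ and $r = \epsilon_k\Omega_k\rho$. Note that $\log(1/\epsilon_k) = C(k\log 2+\log C)/2^k \to 0$, which gives $1-\epsilon_k \sim C k\log 2/2^k$.

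\textbf{Item 1.} We apply the flow lemma to $U$ on $D_r$ with $c = r\|U\|_r \le r/2^k$. It shows that the time-one flow maps $D_{r-c}$ into $D_r$. It therefore suffices to check $\rho_1 \le r - r/2^k$, i.e.\ $\epsilon_k \le 1 - 2^{-k}$. Since $1-\epsilon_k \gtrsim Ck\log 2/2^k$ and $C\ge 1$, this holds for $k$ large, which fixes a first threshold $k_0$.

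\textbf{Item 2.} This follows from item 1 together with property (1), applied with $\varphi_i = x_i\circ\varphi$ and $\|\varphi_i\|_{\rho_1}\le r$. The main point is that property (1) needs the majorant norm $\|\varphi_i\|_{\rho_1}$, not only the sup norm. We handle this by repeating the proof of the flow lemma on majorant series, or by bounding the exponential series $\sum \|U^n x_i\|/n!$ using property (6). Having $\|\varphi_i\|_{\rho_1}\le r$ gives $\|\exp(U)f\|_{\rho_1}\le\|f\|_r\le\|f\|_\rho$, where the last inequality holds because $r<\rho$.

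\textbf{Item 3 (main obstacle).} Expand $R = \sum_{\um} x^{\um}L(\lambda_{\um})$ with $\|\um\|\ge 1$, and write $\exp(\ad_U)R = \Phi R\Phi^{-1}$. By linearity we bound each monomial term separately:
\[
\exp(\ad_U)\big(x^{\um}L(\lambda_{\um})\big) = \exp(U)(x^{\um})\cdot \exp(\ad_U)L(\lambda_{\um}).
\]
The two factors are handled separately.

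\emph{First factor.} Since $\ord(x^{\um})\ge 1$, property (2) gives $\|x^{\um}\|_{r}\le (r/\rho)\|x^{\um}\|_\rho$. Combined with item 2 applied between $r$ and $\rho_1$, this yields $\|\exp(U)x^{\um}\|_{\rho_1}\le \|x^{\um}\|_r = (r/\rho)^{\|\um\|}\rho^{\|\um\|}$, which is at most $\epsilon_k\Omega_k\|x^{\um}\|_\rho$.

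\emph{Second factor.} By Corollary~\ref{Corollary-estimate-deg-ord} with $\deg U\le 2^{k+1}$ and $\ord U\ge 2^k$,
\[
\|\exp(\ad_U)L(\mu)\|_{\rho_1} \le \|\mu\|\sum_{n\ge 0} q^n, \qquad q = 2^{k+1}\,\epsilon_k^{2^k}\,2^{-k} = 2\,(C2^k)^{-C}.
\]
The key point is that $\epsilon_k^{2^k} = (C2^k)^{-C}$ exactly, so the order of $U$ kills the degree growth. Hence $q\to 0$ polynomially and $\sum_n q^n \le 1+3q$.

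\emph{Combining.} Multiplying the two factors and summing over $\um$ gives
\[
\|\exp(\ad_U)R\|_{\rho_1} \le \epsilon_k\,\Omega_k\,(1+3q)\,\|R\|_\rho .
\]
We need $\epsilon_k\Omega_k(1+3q)\le 1 - Ck/2^k$. Since $\epsilon_k \approx 1 - Ck\log 2/2^k$ and $q = O(2^{-Ck})$, this requires choosing $\Omega_k \le 1$ close enough to $1$ that the product stays below $1-Ck/2^k$. That choice is compatible with $\Omega_k\to 1$.

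The genuine difficulty is that $\log 2<1$, so $\epsilon_k$ alone does not beat $1-Ck/2^k$. If $\Omega_k$ is not free to absorb this gap, we would instead exploit $\ord R\ge 1$ more sharply: use the factor $(\rho_1/\rho)^{\|\um\|}$ with its full strength rather than its first power, and keep both factors $\epsilon_k$ coming from $\rho_1/r$ and $r/\rho$. Since $\epsilon_k^2 \approx 1 - 2Ck\log 2/2^k$ and $2\log 2>1$, this suffices. The threshold $k_0$ is then chosen so that all the asymptotic inequalities above hold for $k\ge k_0$.
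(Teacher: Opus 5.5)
Your route is the paper's. Item (1) comes from the flow lemma and $1-\epsilon_k\sim Ck\log 2/2^k$. Item (3) uses the factorization $\exp(\ad_U)(x^{\um}L(\mu))=\exp(U)(x^{\um})\cdot\exp(\ad_U)L(\mu)$, Corollary~\ref{Corollary-estimate-deg-ord} with ratio $A=2(C2^k)^{-C}$ (your $q$), and a single factor $r/\rho$ from $\ord R\ge 1$. You also catch two points the paper passes over.

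First, property (1) needs the majorant bound $\|\varphi_i\|_{\rho_1}\le r$, not merely $\varphi(D_{\rho_1})\subset D_r$. Your exponential-series route supplies this and gives more. Since $\|U\|_{\rho_1}\le (C2^k)^{-C}2^{-k}$ and $\deg U^j(x_i)\le 1+j2^{k+1}$, property (6) yields $\|U^n(x_i)\|_{\rho_1}/n!\le \rho_1A^n$, hence $\|\varphi_i\|_{\rho_1}\le \rho_1/(1-A)$.

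Second, the one-factor bound $\epsilon_k\Omega_k/(1-A)$ is at best $1-\frac{Ck\log 2}{2^k}(1+o(1))$, and this does not imply $1-\frac{Ck}{2^k}$. The paper's proof takes exactly this step without justification, and silently absorbs $\Omega_k$ into the $o(1)$.

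However, neither of your fixes proves the statement as written. $\Omega_k$ is data of the lemma: $k_0$ depends on it, and in Lemma~\ref{lemma-bound-bracket-equation} it is fixed as $\omega_k^{C/2^k}$. So you cannot choose it. Your fallback bound $\epsilon_k^2\Omega_k(1-A)^{-2}$, obtained from the sharpened estimate on $\varphi_i$, still carries $\Omega_k$. The hypotheses ($\Omega_k\to 1$ and $\rho_1<r<\rho$) allow $\Omega_k=\epsilon_k^{-1}(1-4^{-k})$, and for that choice item (3) is false. Take the admissible $U=0$ and $R=x_1L(e_1)$. Then $\|R\|_{\rho_1}/\|R\|_{\rho}=\rho_1/\rho=\epsilon_k(1-4^{-k})\ge 1-\frac{C(k\log 2+\log C)}{2^k}-4^{-k}$, which exceeds $1-\frac{Ck}{2^k}$ for every large $k$ because $\log 2<1$.

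So the missing ingredient is an extra upper bound on $\Omega_k$, for instance $\limsup_k 2^k\log\Omega_k/(Ck)<2\log 2-1$ (e.g.\ $\Omega_k\le 1$). This does hold in the paper's application: $(\omega_k)$ is nonincreasing, so $2^k\log\Omega_k=C\log\omega_k$ is bounded above. Under this hypothesis your two-factor argument gives (3) with the stated constant, because $\log\bigl(\epsilon_k^2\Omega_k(1-A)^{-2}\bigr)\le-\frac{2Ck\log 2}{2^k}(1+o(1))+\log\Omega_k$ and $2\log 2>1$. The paper's one-factor argument, under the same kind of hypothesis, only yields a weaker constant such as $Ck/2^{k+1}$. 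That weaker constant is still enough for Corollary~\ref{corollary-to-lemma-expU}, since $\frac{A}{1-A}\|\lambda\|=O((C2^k)^{-C})$.
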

\begin{proof}
  The item $(1)$ will follow directly from Corollary
  \ref{corollary-inclusion-of-disks} if we show that $\rho_1 \leq r - r
  \| U \|_r$. Under the above hypothesis on $\|U\|_r$, this is equivalent to prove the inequality
  \[ 1 - \left( \frac{1}{C 2^k} \right)^{C / 2^k} \geq \frac{1}{2^k}. \]
  But notice that the left-hand side of this inequality is equivalent to
  $\frac{k C\log 2}{2^k}$ as $k \rightarrow \infty$. Therefore, the
  inequality holds for all sufficiently large $k$.
  
  The item $(2)$ is an immediate consequence of item (1) and equation
  (\ref{estimate-f-flow}).
  
  Let us prove item (3). We initially consider the case of a monomial
  derivation $R = x^{\um} L (\mu)$ and observe that we can write
  \[ \exp (\ad_U) \left( x^{\um} L (\mu) \right) = \exp (U) x^{\um}
     \cdot \exp (\ad_U) L (\mu). \]
  Therefore, applying separately item (2) of the present Lemma to the monomial $x^{\um}$ and Corollary
  \ref{Corollary-estimate-deg-ord} to the derivation $L (\mu)$, we obtain
  \begin{eqnarray*}
    \left\| \exp (\ad_U) \left( x^{\um} L (\mu) \right)
    \right\|_{\rho_1} & \leq & \left\| \exp (U) x^{\um} \right\|_{\rho_1}
    \cdot \| \exp (\ad_U) L (\mu) \|_{\rho_1}\\
    & \leq & \left\| x^{\um} \right\|_r  \sum_{n \geq 0} \left[
    \deg (U)  \left( \frac{\rho_1}{r} \right)^{\tmop{ord} (U)}  \| U \|_r
    \right]^n \| \mu \|\\
    & \leq & \left\| x^{\um} \right\|_r \sum_{n \geq 0} \left[ 2
    \left( \frac{1}{C 2^k} \right)^C  \right]^n\, \| \mu \|.
  \end{eqnarray*}
  We can assume that the constant $A = 2 \left( \frac{1}{C 2^k} \right)^C$ is
  $< 1$ by choosing $k$ sufficiently large. Therefore, applying property (1)
  to $x^{\um}$ and summing the geometric series, we obtain the estimate
  \[ \left\| \exp (\ad_U) \left( x^{\um} L (\mu) \right)
     \right\|_{\rho_1} \leq \frac{\left( r/\rho \right)^{\left\|
     \um \right\|}}{1 - A}  \left\| x^{\um} \right\|_{\rho} \| \mu \| =
     \frac{\left( r/\rho \right)^{\left\| \um \right\|}}{1 - A} 
     \left\| x^{\um} L (\mu) \right\|_{\rho} . \]
  In the general case, under the hypothesis that $\ord(R) \geq 1$, we
  can write the monomial expansion of $R$ as
  \[ R = \sum_{\left\| \um \right\| \geq 1} x^{\um} L \left( \lambda_{\um}
     \right). \]
  Applying the above estimate to each monomial derivation $x^{\um} L \left(
  \lambda_{\um} \right)$, we conclude that
  \[ \| \exp (\ad_U) R \|_{\rho_1} \leq \frac{r / \rho}{1 - A}  \|
     R \|_{\rho}. \]
Using that $\frac{1}{1 - A} \leqslant 1 + 2 A$ for $A \leqslant 1 / 2$ we obtain the equivalence
\[ \frac{r / \rho}{1 - A} \leqslant \Omega_k \frac{1}{(C 2^k)^{C / 2^k}}
   \left( 1 + 4 \frac{1}{C^C 2^{k C}} \right) = 1 - \frac{C k \log 2}{2^k}(1+o(1)),
\]
where $o(1)$ indicates a term which goes to zero as $k \rightarrow \infty$. As a consequence, there exists a $k_0 \in \N$ such that
\[ \| \exp (\tmop{ad}_U) R \|_{\rho_1} \leqslant \left( 1 - \frac{C k}{2^k}
   \right)  \| R \|_{\rho}, \]
for all $k \ge k_0$.      
\end{proof}
We now suppose that $\partial \in \cD(D_\rho)$ is a $S$-perturbation (see Definition \ref{def-S-perturb}) for some linear diagonal $S\neq 0$. In other words, that $\partial$ can be written as
$$
\partial = S + R,
$$
where $R$ is a derivation with $\ord(R) \ge 1$. Note that the conjugated derivation $\partial_1 = \exp (\ad_U) \partial $ also has the form of a $S$-perturbation. Indeed, we can write $\partial_1 = S + R_1$  with
$$
R_1 = (\exp (\ad_U)-\mathbf{Id}) S + \exp (\ad_U) R,
$$
which implies that $\ord(R_1) \ge \min\{\ord(U),\ord(R)\} \ge 1$. Using the previous Lemma, we obtain the following: 
\begin{corollary}\label{corollary-to-lemma-expU}
Suppose given a constant $\Delta >0$. Then, under the above assumption, there exists a constant $k_0 \in \N$ $($ depending only on $\lambda,\Delta,C$ and the sequence $(\Omega_k)$ $)$ such that
$$
\| R \|_{\rho} \le \Delta \Rightarrow  \| R_1 \|_{\rho_1} \le \Delta,
$$
for all $k \ge k_0$. 
\end{corollary}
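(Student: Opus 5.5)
We split $R_1$ as in the displayed formula,
\[
R_1 = (\exp(\ad_U)-\mathbf{Id})\,S + \exp(\ad_U)\,R,
\]
and bound the two pieces separately on $D_{\rho_1}$.

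\textbf{Second term.} Since $\ord(R)\ge 1$ and $R\in\cD(D_\rho)$, Lemma~\ref{lemma-bound-expU}(3) applies directly and gives
\[
\|\exp(\ad_U)R\|_{\rho_1}\le \Big(1-\frac{Ck}{2^k}\Big)\|R\|_\rho\le \Big(1-\frac{Ck}{2^k}\Big)\Delta .
\]
So this term saves an amount $Ck\Delta/2^k$ relative to $\Delta$. The rest of the argument shows that the first term is much smaller than this saving.

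\textbf{First term.} Write $S=L(\lambda)$ and expand
\[
(\exp(\ad_U)-\mathbf{Id})\,S=\sum_{n\ge 1}\frac{1}{n!}\ad_U^n(L(\lambda)).
\]
We use Corollary~\ref{Corollary-estimate-deg-ord} with the pair of radii $\rho_1<r$, together with the hypotheses $\deg(U)\le 2^{k+1}$, $\ord(U)\ge 2^k$, $\|U\|_r\le 2^{-k}$ and the first relation in \eqref{radius-assumptions}. The quantity inside the brackets is then
\[
\deg(U)\Big(\frac{\rho_1}{r}\Big)^{\ord(U)}\|U\|_r\le 2^{k+1}\Big(\frac{1}{C2^k}\Big)^{C}2^{-k}=2\Big(\frac{1}{C2^k}\Big)^{C}=A .
\]
This is the same constant $A$ that appears in the proof of Lemma~\ref{lemma-bound-expU}. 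Summing the geometric series from $n=1$ gives
\[
\|(\exp(\ad_U)-\mathbf{Id})S\|_{\rho_1}\le \frac{A}{1-A}\,\|\lambda\|\le 2A\|\lambda\| = 4\|\lambda\|\,C^{-C}2^{-kC}
\]
for $k$ large (so that $A\le 1/2$).

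\textbf{Conclusion and main obstacle.} Combining the two bounds,
\[
\|R_1\|_{\rho_1}\le \Delta-\frac{Ck\Delta}{2^k}+4\|\lambda\|C^{-C}2^{-kC}.
\]
It therefore suffices that $4\|\lambda\|C^{-C}2^{-kC}\le Ck\Delta\,2^{-k}$ for all large $k$. The main obstacle is comparing these two small quantities; this is where $C\ge 1$ is used.

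If $C>1$, the left side decays like $2^{-kC}$ while the right side decays only like $k2^{-k}$, so the inequality holds for all $k$ beyond some threshold depending on $\|\lambda\|$, $\Delta$ and $C$.

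If $C=1$, the inequality reads $4\|\lambda\|2^{-k}\le k\Delta\,2^{-k}$, i.e. $k\ge 4\|\lambda\|/\Delta$, which again holds for $k$ large.

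In either case we take $k_0$ to be the maximum of the $k_0$ from Lemma~\ref{lemma-bound-expU}, the threshold ensuring $A\le 1/2$, and the threshold just found. This $k_0$ depends only on $\lambda$, $\Delta$, $C$ and $(\Omega_k)$, which gives the claim.
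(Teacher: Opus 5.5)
Your proof is correct and follows the paper's argument essentially step for step. You use the same splitting of $R_1$, the same bound $\frac{A}{1-A}\|\lambda\|$ on $(\exp(\ad_U)-\mathbf{Id})S$ via Corollary~\ref{Corollary-estimate-deg-ord} with $A=2(C2^k)^{-C}$, and the same absorption of this term into the saving $Ck\Delta/2^k$ from Lemma~\ref{lemma-bound-expU}(3). Your explicit comparison, including the borderline case $C=1$ and the explicit choice of radii $\rho_1<r$, just spells out the asymptotic step that the paper states in one line.
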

\begin{proof}
Using the estimates of Corollary \ref{Corollary-estimate-deg-ord} and the fact that $S = L(\lambda)$ we have
 \[
\left\| \bigl(\exp(\operatorname{ad}_U)-\mathbf{Id}\bigr) S  \right\|
\le
\frac{A}{1-A}\,\|\lambda\| ,
\]
where $A =2 \left( \frac{1}{C 2^k} \right)^C$ as in the previous Lemma. In particular, 
$$
\frac{A}{1-A} \|\lambda\| \sim 2 \left( \frac{1}{C 2^k} \right)^C \| \lambda \| ,
$$
as $k \rightarrow \infty$. As a consequence, there exists a value of $k_0$ (depending on $\Delta$) such that
$$
\frac{A}{1-A} \|\lambda\| \le \frac{C k}{2^k} \Delta,
$$
for all $k \ge k_0$. Therefore, based on the item (3) of the previous Lemma, one obtains
$$
\| R_1 \|_{\rho_1} \le \frac{A}{1-A}\,\|\lambda\| + \left( 1 - \frac{C k}{2^k} \right)  \| R \|_{\rho} \le \frac{C k}{2^k} \Delta + \left( 1 - \frac{C k}{2^k} \right) \Delta = \Delta
$$
for sufficiently large values of $k$. 
\end{proof}\subsection{Analytic estimates for the bracket equation}
We now provide the second main ingredient in the induction procedure for analytic normalization. The goal is to control the norm of the solution $U$ of the bracket equation (\ref{eq:homological}) in terms of the norm of $W$. The $\omega$-condition on the eigenvalues of $S$ will be one of the essential ingredients here (even if it is not used in its {full-force}).

Let $S=L(\lambda)$ be a diagonal derivation satisfying the $\omega$-condition. Given $k \in \mathbb{N}$, we consider again three radii $0 < \rho_1
< r < \rho$ (depending on $k$) defined by (\ref{radius-assumptions}), where we now additionally suppose that
$$C \ge 3,\qquad \Omega_k : = (\omega_k)^{C / 2^k},$$ 
where $(\omega_k)_k = (\omega_k(\lambda))_k$ is the sequence defined in the Introduction in terms of the eigenvalues of $S$. 
Note that the $\omega$-condition implies that
$$\lim_{k \rightarrow \infty} \Omega_k = 1.$$ 
\begin{lemma}\label{lemma-bound-bracket-equation}
  Let $k\geq 0$ and $f_0 \in \Gr_0(\OO,S)$. Suppose that $W$ is a polynomial derivation satisfying the following conditions:
  \[ \ord (W) \geqslant 2^k, \quad \deg (W) \le 2^{k + 1}. \]
  Then, there exists a unique polynomial derivation $U$ which solves the truncated
  bracket equation
  \begin{equation}\label{truncated-bracket-eq}
  [(1 + f_0) S, U] = - W_{\ast} \qquad \tmmathbf{\tmop{mod} \quad
     \mathfrak{m}^{(k + 1)}} ,
  \end{equation}
  and {{additionally}} satisfies the following three conditions
  \[ \ord (U) \geqslant 2^k,\qquad \deg (U) < 2^{k + 1}\quad \text{and} \quad U =
     U_{\ast} .\]
  Moreover, there exists a $k_0 \in \mathbb{N}$ $($ depending only on
  $\| \lambda \|$ and the sequence $(\omega_k)$ $)$ such that, for all $k \geqslant k_0$, the conditions
  \[ \| f_0 \|_{\rho} \leqslant \frac{1}{2} \quad\text{and}\quad \| W \|_{\rho}
     \leqslant 1   \]
imply that $\| U \|_r \leqslant \frac{1}{2^k}$. 
\end{lemma}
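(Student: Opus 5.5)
The plan is to build $U$ monomial by monomial from the explicit inverse in Lemma~\ref{lemma-formula-inverse-adjoint}, and then to bound it in $r$-norm. The arithmetic smallness of $\langle\lambda,\um\rangle$ will be beaten by the flatness gain $(r/\rho)^{2^k}$ on $W$.

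\emph{Existence and uniqueness.} Expand $W_\ast=\sum x^{\um}L(\mu_{\um})$ with $2^k\le\|\um\|\le 2^{k+1}$ and $\alpha_{\um}=\langle\lambda,\um\rangle\neq0$. Set
$$U=-\sum_{\um}\Big(\frac{x^{\um}}{\alpha_{\um}(1+f_0)}L(\mu_{\um})+\frac{x^{\um}L(\mu_{\um})(f_0)}{\alpha_{\um}^2(1+f_0)^2}S\Big),$$
and then truncate modulo $\mathfrak m^{(k+1)}$. Since $f_0\in\Gr_0(\OO,S)$, each summand stays in $\Gr_{\alpha_{\um}}$, so $U=U_\ast$. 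It has order $\ge 2^k$, and after truncation its degree is $<2^{k+1}$. Because $\ad_{(1+f_0)S}$ preserves both the $S$-graduation and the order filtration, truncation commutes with the equation modulo $\mathfrak m^{(k+1)}$.

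For uniqueness I will work in the finite-dimensional space of polynomial derivations of order $\ge 2^k$ and degree $<2^{k+1}$, modulo $\mathfrak m^{(k+1)}$, restricted to a fixed nonzero degree $\alpha$. There the operator factors as $\alpha(1+f_0)(\mathbf{Id}-\mathbf N)$ with $\mathbf N^2=0$, as in the proof of Lemma~\ref{lemma-formula-inverse-adjoint}. Multiplication by the unit $1+f_0$ is invertible on this truncated space, so the operator is injective.

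\emph{Estimate.} I take $\|\cdot\|_r$ of each piece. Since $\|f_0\|_r\le\|f_0\|_\rho\le 1/2$, I get $\|(1+f_0)^{-1}\|_r\le 2$. The derivative $L(\mu)(f_0)$ is not polynomial, so I will use a Cauchy-type estimate:
$$\|L(\mu)f_0\|_r\le\|\mu\|\sup_j j(r/\rho)^j\|f_0\|_\rho\le \frac{\|\mu\|}{2e\log(\rho/r)}.$$
The assumptions \eqref{radius-assumptions} give $\log(\rho/r)\approx Ck\log 2/2^k$, so this factor is $O(2^k/(Ck))$. Every $\alpha_{\um}$ comes from a nonresonant index of norm $\le 2^{k+1}$, so $|\alpha_{\um}|\ge\omega_{k+1}$. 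This yields
$$\|U\|_r\le\Big(\frac{2}{\omega_{k+1}}+\frac{c\,\|\lambda\|\,2^k}{Ck\,\omega_{k+1}^2}\Big)\|W_\ast\|_r .$$
By flatness (property (3)),
$$\|W_\ast\|_r\le(r/\rho)^{2^k}\|W\|_\rho\le (C2^k)^{-C}\,\omega_k^{C}.$$

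\emph{Closing, and the main obstacle.} The hard part is matching $\omega_k^{C}$ against $\omega_{k+1}^{-2}$. My first approach is to note that $(\omega_k)$ is nonincreasing and bounded, so it suffices to show $\omega_k^{C}/\omega_{k+1}^2$ is not too large. If this index mismatch causes trouble, I would instead use $\omega_{k+1}$ in the definition of $\Omega_k$. That change is harmless, since the $\omega$-condition implies $\sum_k -\log\omega_{k+1}/2^k<\infty$ as well, so still $\Omega_k\to1$.

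Granting that, with $C\ge3$ one power of $\omega$ survives and $\omega^{C-2}$ is bounded. The remaining factor is then $O\big(\|\lambda\|\,2^k(C2^k)^{-C}\big)\le O(2^{-2k})$, which is $\le 2^{-k}$ for all $k\ge k_0$, where $k_0$ depends only on $\|\lambda\|$ and $(\omega_k)$.
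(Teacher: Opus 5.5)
\paragraph{Comparison with the paper's proof.} Your proposal is essentially the paper's proof. Both apply the explicit inverse of Lemma~\ref{lemma-formula-inverse-adjoint} monomial by monomial, bound the divisors $\langle\lambda,\um\rangle$ from below by the $\omega$-sequence, and use the flatness gain $(r/\rho)^{2^k}$. In that gain, the factor $\Omega_k^{2^k}=\omega^C$ with $C\ge 3$ absorbs the $\omega^{-2}$ coming from the second term of the inverse.

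The differences are technical:
\begin{itemize}
\item The paper estimates the inverse in the $\rho$-norm and then applies flatness to $U$. It treats $L(\mu)(f_0)$ by replacing $f_0$ with its truncation $f_0^{(k)}$ of degree $\le 2^k$ (the only part that survives modulo $\mathfrak m^{(k+1)}$) and invoking property (6) of Subsection~\ref{subsect-analytic-norms}.
\item You estimate in the $r$-norm, apply flatness to $W_\ast$, and use a Cauchy estimate between $r$ and $\rho$, which even saves a factor $k$. There you need, and only have, the lower bound $\log(\rho/r)\ge \frac{C}{2^k}\big(k\log 2+\log C-\log\|\lambda\|\big)$, not an asymptotic equality.
\item Your injectivity argument on the truncated graded pieces supplies the uniqueness that the paper only asserts.
\end{itemize}

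\paragraph{The closing step.} Here you must commit, and your first idea cannot work. The $\omega$-condition controls only the weighted sum $\sum_k 2^{-k}\log(1/\omega_k)$, not the one-step ratios. It allows sharp isolated drops: for instance $\log(1/\omega_{k+1})$ of size $2^{k}/j^2$ along a sparse sequence $k=k_j$, while $\log(1/\omega_{k_j})$ is of much smaller order. This already happens for $n=2$, with Bruno numbers having sparse, very large partial quotients. Along such a sequence $\omega_k^C/\omega_{k+1}^2$ outgrows every power of $2^k$, whereas you would need it bounded by a fixed power of $2^k$.

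Your fallback $\Omega_k:=\omega_{k+1}^{C/2^k}$ is the correct fix. It still gives $\Omega_k\to 1$, which is all that Lemma~\ref{lemma-bound-expU} and Corollary~\ref{corollary-to-lemma-expU} use. The radii in the final step of the Main Theorem then involve $\omega_{k+1}$ in place of $\omega_k$, and their product still has a positive limit by the $\omega$-condition, since $\sum_k 2^{-k}\log(1/\omega_{k+1})<\infty$. Note that this changes the radii in \eqref{radius-assumptions}, so strictly you prove the lemma in shifted form.

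The paper's own estimate also writes $|\alpha|\ge\omega_k$ for every monomial of $W$. These monomials have $2^k\le\|\um\|<2^{k+1}$, and the definition of $\omega_k$ in the Introduction only yields $|\alpha|\ge\omega_{k+1}$ for them. So the mismatch you flagged is an off-by-one in the paper's argument, and your shifted version is the correct form of the estimate rather than a departure from it.
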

\noindent 
\begin{remark}
Notice that we only require $U$ to solve the bracket equation up to flat terms of order $2^{k+1}$.  This is the reason why we can obtain a polynomial solution. 
\end{remark}
\begin{proof}
  We have seen in Corollary \ref{cor-uniquesol-cohomological-eq} that the
  equation
  \[ 
  \big[ (1 + f_0) S, V \big] = -W_{\ast} \]
  has a unique solution $V \in \cD$ such that $\tmop{ord} (V) \geqslant 2^k$
  and $V = V_{\ast}$.\quad Let us write the monomial expansion of such
  solution $V$ as
  \[ V = \sum_{\left\| \um \right\| \geqslant 2^k} x^{\um} L \left(
     \lambda_{\um} \right). \]
  Then, the derivation defined by its {\tmem{truncation at degree $2^{k +
  1}$}},
  \[ U = \sum_{2^k \leqslant \left\| \um \right\| < 2^{k + 1}} x^{\um} L
     \left( \lambda_{\um} \right).\]
  is the unique polynomial solution satisfying both the bracket equation modulo
  $\mathfrak{m }^{(k + 1)}$, and the required conditions on the order and the
  degree stated in the enunciate.
  
  It remains to show the bound on the $r$-norm $\| U \|_r$. We initially consider the case where $W = x^{\um}  L (\mu)$ is a monomial derivation
  belonging to $\Gr_{\alpha} \left( S, \cD \right)$, i.e.~such that $\langle \lambda,\um\rangle = \alpha$. Then, from the
  explicit formula for $\tmop{ad}_{(1 + f_0) S}^{-1}$ given in Lemma
  \ref{lemma-formula-inverse-adjoint} we obtain the estimate
  \begin{eqnarray*}
    \left\| \ad_{(1 + f_0) S}^{- 1}  \left( \left. x^{\um}  L (\mu) \right)
    \right) \right\|_{\rho} & \leqslant &  \left(  \frac{\| \mu \|}{\alpha (1
    - \| f_0  \|_\rho)} + \frac{\| L (\mu) f_0^{(k)} \|_{\rho}}{\alpha^2  (1 - \|
    f_0 \|_\rho)^2}  \| \lambda \|  \right) \left\| x^{\um} \right\|_{\rho}\\
    & \leqslant &  \left(  \frac{1}{\omega_k  (1 - \| f_0  \|_\rho)} + \frac{2^{k
    } \| f_0 \|_{\rho}}{\omega_k^2 (1 - \| f_0 \|)^2}  \| \lambda \|
    \right)  \left\| x^{\um} \right\|_{\rho} \| \mu \| \\
    & \leqslant & \left(  \frac{2}{\omega_k } + \frac{2^{k + 1}}{\omega_k^2} 
    \| \lambda \| \right) \left\| x^{\um} L (\mu) \right\|_{\rho}\\
    & \leqslant & \left( \frac{2}{\omega_k} \right)  \left( 1 + \frac{2^{k }}{\omega_k}   \| \lambda \| \right) \left\| x^{\um} L (\mu)
    \right\|_{\rho},
  \end{eqnarray*}
  where we used property (6) of subsection \ref{subsect-analytic-norms} to
  write that
  \[ \| L (\mu) f_0^{(k)} \|_{\rho} \leqslant \| \mu \| \deg (f_0^{(k)})  \|
     f_0^{(k)} \| \leqslant \| \mu \| 2^{k } \| f_0 \|_{\rho}. \]
  We consider now the case where $W$ is an arbitrary polynomial derivation satisfying the conditions of the enunciate. Note that we
  can write $S$-weighted decomposition of $W_{\ast}$ as
  \[ W_{\ast} = \sum_{\alpha \neq 0} W_{\alpha}, \]
  where each $W_{\alpha} \in \tmop{Gr}_{\alpha} \left( S, \cD \right)$ is a
  finite sum of monomial derivations. Applying the above estimate, and the
  triangle inequality, we obtain
  \[ \| U \|_{\rho} \leqslant \left( \frac{2}{\omega_k} \right)  \left( 1 +
     \frac{2^{k + 2}}{\omega_k}   \| \lambda \| \right)  \| W \|_{\rho}. \]
  Since $\ord (U) \geqslant 2^k$, it follows from property (3) of subsection
  \ref{subsect-analytic-norms} that
  \[ \| U \|_r \leqslant \left( \frac{r}{\rho} \right)^{2^k} \| U \|_{\rho}. \]
  In view of the definition of the ratio $r/\rho$ in \eqref{radius-assumptions}, we get
  \[ \| U \|_r \leqslant \left( \frac{\omega_k}{C 2^k} \right)^C \left(
     \frac{2}{\omega_k} \right)  \left( 1 + \frac{2^{k }}{\omega_k}   \|
     \lambda \| \right) \sim \frac{2 \omega_k^{C - 2} }{ C^C} \| \lambda \|
     \frac{1}{2^{k (C - 1)}}. \]
 Since $C \ge 3$ and $\omega_k \le \|\lambda\|$ for all $k$, we conclude that $\| U  \|_r \leqslant
  \frac{1}{2^k}$ for all sufficiently large values of $k$. 
\end{proof}

\subsection{Proof the Main theorem: Analytic normalization modulo the Bruno ideal}

We will fix once and for all a logarithmic basis $\{ S, T_1, \ldots, T_n
\}$ so that each analytic derivation $R$ has an expansion
\[ R = f S + \sum_{j = 1}^{n - 1} g_j T_j,\]
with $f, g_j$ analytic germs. Further, it follows from Remark
\ref{remark-norm-logarithmic-components} that there exists an absolute
constant $d > 0$ (depending only on the choice of the basis $\{ S, T_1,
\ldots, T_{n - 1} \}$) such that for all derivations $R$ and all radius $\rho
> 0$ we have
\begin{equation}
  \max \{ \| f \|_{\rho}, \| g_1 \|_{\rho}, \ldots, \| g_{n - 1} \|_{\rho} \}
  \leqslant d \| R \|_{\rho}. \label{estimates-coeffs}
\end{equation}
In order to start the inductive procedure, we chose the following
constants
\[ \text{$\Delta = \min \left\{ 1, \frac{1}{2 d} \right\}$, $C = 3$}, \]
and choose $k_0 \in \mathbb{N}$ as the maximum of the integers $k_0$ given by
Corollary 4.7 and Lemma 4.8 (once we have fixed the above choices of $\Delta$ and
$C$).

Consider now an analytic $S$-perturbation $\partial = S + R$. We remark that the derivations obtained at each step of the induction procedure are $S$-perturbations. Before applying
the analytic normalization procedure, we need to perform the following
preparation steps:
\begin{enumerate}
  \item Using Remark \ref{remark-truncated-bracket-equation}, we can
  assume, up to a polynomial change of coordinates, that the hypothesis
  $\mathbf{(H_{k_0})}$ of subsection 3.2 holds for $\partial$, i.e. that the
  $\mathfrak{m}^{2^{k_0}}$-truncation of $\partial^{(k_0)}$ is in normal form
  modulo $B (\partial^{(k_0)})$.
  
  \item Up to a homothety, we can assume that $R \in \cD (D_{\rho})$ with
  $\rho = 1$.
  
  \item Up to replacing $R$ by some multiple $(1 / \mu) R$ with $\mu > 0$, we
  assume that
  \[ \| R \|_{\rho} \leqslant \Delta. \]
\end{enumerate}
Under these conditions, the following properties
hold for $k=k_0$:
\begin{description}
  \item[$\tmmathbf{P_k (1)}$] If we denote by $f_0$ the degree-0 component of $f$ with
  respect to the $S$-graduation then
  \[ \| f_0 \|_{\rho} \leqslant \| f \|_{\rho} \leqslant \frac{1}{2}. \]
  Note that the leftmost inequality follows simply from the fact that the power series
  of $f_0$ is extracted from the power series of $f$ and the rightmost
  inequality follows from (\ref{estimates-coeffs}). 
  \item[$\tmmathbf{P_k (2)}$] If we denote by $W_{\ast}$ the polynomial derivation
  defined by \eqref{definition-of-W} then
  \[ \| W_{\ast} \|_{\rho} \leqslant \| R \|_{\rho} \leqslant 1. \]
  As above, the leftmost inequality follows from the fact that the monomial
  expansion of $W_{\ast}$ is extracted from the monomial expansion of $R$.
\end{description}
As a consequence, we are precisely in the conditions of Lemma
\ref{lemma-bound-bracket-equation}, which guarantees that the polynomial
solution $U$ of the truncated bracket equation \eqref{truncated-bracket-eq}
satisfies the estimate
\[ \| U \|_r \leqslant \frac{1}{2^k}. \]
Therefore, we can apply Corollary \ref{corollary-to-lemma-expU} with
this choice of $U$. It guarantees that under the coordinate change given by
the automorphism $\Phi = \exp (U)$ one obtains a conjugated derivation $\partial_1 = S + R_1$ such that
\[ \| R_1 \|_{\rho_1} \leqslant \Delta, \]
where we recall that $\rho_1$ is given in terms of $\rho$ by
\eqref{radius-assumptions}.

Expanding the new derivation $\partial_1$ with respect to the fixed logarithmic basis $\{S,T_j\}$, one concludes that it satisfies the conditions $\tmmathbf{P_k (1)}$
and $\tmmathbf{P_k (2)}$ with now $k = k_0 + 1$. Therefore, the induction procedure can be
continued.

As in the proof of Theorem \ref{thm:formalnormalizationtruncated}, this inductive procedure produces a sequence of automorphisms 
$$\Phi_1,\;\Phi_1 \circ \Phi_2,\ldots,\; \Phi_1 \circ \cdots \circ \Phi_s,\ldots$$ 
which converges (in the Krull topology) to a formal automorphism $\Phi$. However, the above analytic estimates allows also to consider the associated sequence of coordinate changes
\[ \varphi_1 : D_{\rho_1} \rightarrow D_{\rho} \]
\[ \varphi_1 \circ \varphi_2 : D_{\rho_2} \rightarrow D_{\rho} \]
\[ \varphi_1 \circ \cdots \circ \varphi_s : D_{\rho_s}
   \rightarrow D_{\rho} , \quad s\ge 1,\]
where $\rho_s$ is given by
\[ \rho_s = \rho \prod_{k = k_0}^{k_0 + s} \left( \frac{\omega_k}{C 2^{2 k}}
   \right)^{\frac{C}{2^k}}.  \]
To conclude the proof, it suffices to show that the sequence of radii $(\rho_s)_{s\ge 1}$
converges to a strictly positive limit $\overline{\rho}$. Indeed, this will imply that the formal automorphism $\Phi$ constructed above defines an analytic coordinate change with domain the polydisk $D_{\overline{\rho}}$. 

Since the general term of the above product converges to $1$, it is enough to establish
the convergence of the series
\[ \sum_{k \geqslant k_0}  \frac{\log (\omega_k)}{C 2^k} - \sum_{k \geqslant
   k_0} \frac{2 Ck \log 2 + C \log C}{2^k}. \]
This follows immediately from the $\omega$-condition. The proof of the Main Theorem is therefore complete.

\section{Examples and applications of the Bruno ideal}\label{sec:examplesapplications}
We now present applications of the Main Theorem and its corollary, starting with two-dimensional vector fields.
\begin{example}
Let \(\partial = S + R\) be a formal logarithmic vector field with linear part
\[
S = \lambda\, x \xx + \mu\, y \yy,
\]
where \(\lambda \in \C^\ast\) and \(\mu \in \C\).
Assuming that \(\partial\) is in normal form, the perturbation term \(R\) can be written as
\[
R = GS + F \, y \yy,
\]
where \(F,G \in \C[[x,y]]\) are formal power series without constant term having an expansion of the form
\[
F = \sum_{\substack{(k,l)\in \N^2 \\ \lambda k + \mu l = 0}} a_{k,l} x^k y^l, \quad G = \sum_{\substack{(k,l)\in \N^2 \\ \lambda k + \mu l = 0}} b_{k,l} x^k y^l  .
\]
The Bruno ideal is therefore given by
$B(\partial) = \big\langle F \big\rangle$  and we distinguish two cases:

\begin{itemize}
\item \emph{Resonant case.}
If \(\mu/\lambda \in \Q_{\ge 0}\), then writing \(\mu/\lambda = -m/n\) in irreducible terms, we have
\[
F, G \in \C[[x^n y^m]] \cap \mathfrak{m}.
\]

\item \emph{Non-resonant case.}
If \(\mu/\lambda \notin \Q_{\ge 0}\), then \(F = 0\), \(G = 0\).
\end{itemize}

Let us now consider an arbitrary analytic vector field \(\delta\) that is formally conjugated to \(\partial\), and examine the information provided by the Main Theorem.

Assume first that \(\partial\) is in the resonant case. In this situation, the \(\omega\)-condition is always satisfied, and therefore
\[
V(B(\delta)) \subset (\C^2,0)
\]
is a germ of an analytic variety.

We further distinguish two sub-cases.  
If \(F = 0\), then \(V(B(\delta)) = (\C^2,0)\), and, up to an analytic change of coordinates, we may write
\[
\delta = u \left( n x \xx - m y \yy \right),
\]
for some analytic unit \(u \in \C\{x^n y^m\}\). On the other hand, if \(F \neq 0\), then \(V(B(\delta))\) defines a germ of an analytic variety with either one irreducible smooth component (if \(\mu = 0\)) or two irreducible smooth components (if \(\mu \neq 0\)), intersecting transversely.

Finally, assume that \(\partial\) is in the non-resonant case and that the \(\omega\)-condition holds. Then \(V(B(\delta)) = (\C^2,0)\), and the Main Theorem implies that \(\delta\) is analytically linearizable.
\end{example}
We observe that the resonant case above with $F=0$ corresponds to a classical theorem of Dulac: if a germ of a two-dimensional vector field with non-vanishing linear part admits a formal first integral, then it also admits an analytic first integral.

More interesting examples arise in higher dimension.
The following result provides an alternative proof of Lyapunov's theorem for logarithmic vector fields. 
Consider a germ of real analytic vector field of the form 
$$\partial = -y\xx + x\yy + \lambda z\zz + R,$$
where $\lambda \in \R^\ast$ and $R$ is $1$-flat. By the Center Manifold Theorem, $\partial$ always has a formal invariant manifold $\widetilde{W}^c$ tangent to the $z=0$ direction, called its {\em center manifold}. We say that $\partial$ if formally integrable on its center manifold if the restriction of $\partial$ to $\widetilde{W}^c$ has a formal fist integral. On the other hand, we say that $\partial$ is analytic integrable on its center manifold if $\widetilde{W}^c$ is a germ of analytic variety and the restriction of $\partial$ to $\widetilde{W}^c$ has an analytic first integral. 
\begin{proposition}[Lyapunov's theorem \cite{Lyapunov}] \label{prop:lyapunov}
    $\partial$
is formally integrable on its center manifold if and only if it is analytically integrable on its center manifold.
\end{proposition}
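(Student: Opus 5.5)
We complexify and work in the coordinates $u = x + i y$, $v = x - i y$, $z$. In these coordinates the linear part becomes $S = i\,u\frac{\partial}{\partial u} - i\,v\frac{\partial}{\partial v} + \lambda\, z\frac{\partial}{\partial z}$. As in the rest of the paper, we assume $\partial$ is logarithmic in these coordinates, i.e.\ it preserves $\{u=0\}$, $\{v=0\}$ and $\{z=0\}$. The converse implication is trivial, since an analytic invariant manifold with an analytic first integral is in particular formal. We therefore prove only that formal integrability implies analytic integrability. The strategy is to reduce to the two-dimensional resonant case treated in the Example above.

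\emph{Step 1: the center manifold is $\{z=0\}$, hence analytic.} Since $\partial$ is logarithmic, the hyperplane $\{z=0\}$ is an analytic invariant surface tangent to the center eigendirections. We then show that the formal center manifold $\widetilde W^c$ is unique. Write a candidate invariant graph $z = h(u,v)$ with $h\in\mathfrak m^2$. The invariance equation determines the coefficient of $u^a v^b$ recursively. Its leading factor is $i(a-b) - \lambda$, which never vanishes because $\lambda\in\R^\ast$. Hence $h$ is uniquely determined, so $h=0$ and $\widetilde W^c = \{z=0\}$. In particular, the center manifold is analytic.

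\emph{Step 2: restrict and apply the Main Theorem in dimension two.} The restriction $\delta = \partial|_{z=0}$ is an analytic logarithmic vector field on $(\C^2,0)$ with linear part $i\,u\frac{\partial}{\partial u} - i\,v\frac{\partial}{\partial v}$. This is the resonant case of the Example, with $\mu/\lambda = -1$, so $m=n=1$, and the $\omega$-condition holds trivially. By Theorem~\ref{thm:formalnormalizationtruncated}, $\delta$ is formally conjugate to a normal form $(1+G)\,S' + F\,(u\frac{\partial}{\partial u} + v\frac{\partial}{\partial v})$ with $F, G\in \C[[uv]]\cap\mathfrak m$ and $S'=i\,u\frac{\partial}{\partial u} - i\,v\frac{\partial}{\partial v}$.

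\emph{Step 3 (main point): formal integrability forces $F=0$.} We show that a formal first integral of $\delta$ forces $F=0$. The vector fields $S'$ and $u\frac{\partial}{\partial u}+v\frac{\partial}{\partial v}$ commute, and $F, G$ are first integrals of $S'$. Suppose $H$ is a nonconstant formal first integral. Decompose $H$ into $S'$-homogeneous components. One checks that the lowest-order component $H_0$ can be taken in $\C[[uv]]$, and that the radial term then acts on it by $F\cdot(u\partial_u+v\partial_v)(H_0) = 2F\,(uv)\,H_0'(uv)$. This vanishes only if $F=0$, which is the classical center/focus dichotomy. This is the step needing the most care, in particular the order-by-order argument that the lowest nonvanishing part of $F$ obstructs a first integral.

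Once $F=0$, the Bruno ideal satisfies $B(\delta) = \langle F\rangle = 0$. The Main Theorem, together with Corollary~\ref{cor:coeffsinB}, gives an analytic coordinate change in which $\delta = (1+f_0)\,S'$ with $f_0\in\C\{uv\}$. Hence $uv = x^2 + y^2$, pulled back by this analytic change, is an analytic first integral on $\widetilde W^c$. If needed, we replace it by its real part to obtain a real analytic first integral, using that $\partial$ is real. This establishes the proposition.
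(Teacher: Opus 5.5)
Your proof is correct, but it takes a different route from the paper.

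\paragraph{How the two routes differ.} The paper stays in dimension three and works with $B(\partial)$ itself. If $B(\partial)=0$, the Main Theorem linearizes $\partial$ analytically. If $B(\partial)\neq 0$, the paper asserts that formal integrability makes $\widetilde W^c$ an irreducible component of $V(B(\partial))$, hence analytic; implicitly, the linear foliation on the Bruno variety then supplies the first integral. You instead combine the logarithmic hypothesis with $i(a-b)\neq\lambda$ to get $\widetilde W^c=\{z=0\}$ outright, so analyticity of the center manifold costs nothing. You then apply the Main Theorem to the two-dimensional restriction, whose Bruno ideal vanishes exactly when the restriction has a nonconstant formal first integral. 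This is the Dulac-type statement the paper mentions after its two-dimensional example.

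\paragraph{What each route buys.} Your route uses the right ideal: the three-dimensional Bruno ideal does not detect integrability on $\{z=0\}$. For instance, $-y\partial_x+x\partial_y+(\lambda+x^2+y^2)\,z\partial_z$ is already in normal form with $B(\partial)=\langle x^2+y^2\rangle$. Its center manifold $\{z=0\}$ carries the first integral $x^2+y^2$, yet $\{z=0\}$ is not a component of $V(B(\partial))$. So the paper's key assertion fails as stated, while your argument goes through unchanged. The price is that your argument is confined to the logarithmic setting. There the analyticity of $\widetilde W^c$, which is the substantive part of Lyapunov's theorem in general, is automatic.

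\paragraph{Details to tighten.}
\begin{enumerate}
\item Theorem~\ref{thm:formalnormalizationtruncated} only yields a normal form modulo the Bruno ideal. The full normal form $(1+G)S'+F\,(u\partial_u+v\partial_v)$ you use comes from the Poincar\'e--Dulac theorem.
\item Step 3 needs no order-by-order analysis. Since $F,G\in\C[[w]]$ with $w=uv$, the normal form preserves each $S'$-graded piece, so $H$ is a first integral iff every graded component $H_\alpha$ is. For $\alpha=ik$ with $k>0$, write $H_\alpha=u^k\phi(w)$. Then $\delta(H_\alpha)=u^k\bigl[(1+G)\,ik\,\phi+F\,(k\phi+2w\phi')\bigr]$, whose lowest-order coefficient is $ik$ times that of $\phi$, so $\phi=0$. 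The case $k<0$ is the same with $v^{-k}$ in place of $u^k$. Hence a nonconstant first integral has a nonconstant degree-$0$ part $H_0\in\C[[w]]$. Then $2F\,wH_0'(w)=0$ in the integral domain $\C[[w]]$ forces $F=0$ directly.
\item If the real part of the pulled-back $uv$ has vanishing quadratic jet, take its imaginary part instead.
\end{enumerate}
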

\noindent 
\begin{proof}
   Suppose that $B(\partial)=0$, then the result is obvious since the foliation is necessarily linearizable, being $z=0$ its analytically integrable center manifold. Now suppose $B(\partial)\neq 0$ and that the vector field is formally integrable in $\widetilde
   {W}^c$. Then it follows from the definition of $B(\partial)$ that $\widetilde{W}^c$ is an irreducible component of $B(\partial)$, therefore it is analytic.

\end{proof}
Related to Lyapunov's theorem, we get also a relatively easier proof for the following result, see~\cite{Aulbach1985,Corral2025,Kelley1967}.
\begin{prop}
    Let $\partial$ be a vector field in the hypotheses of the previous Proposition. There is an infinite family $\{C_i\}_{i\in I}$ of cycles accumulating at the origin if and only if $\partial$ is analytically integrable on its center manifold.
\end{prop}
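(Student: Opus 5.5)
We derive the statement from the Main Theorem and Corollary~\ref{cor:lienarfoliation}, combined with classical facts about limit cycles near a Hopf-type center. Concretely, we take the Bruno variety $V=V(B(\partial))$; it is analytic since the spectrum $\{i,-i,\lambda\}$ with $\lambda\in\R^\ast$ satisfies the $\omega$-condition (the small divisors $|\langle (i,-i,\lambda),\um\rangle|$ are bounded below by $\min(1,|\lambda|)$ once nonzero). In normal form the resonant monomials are powers of $x\bar x = x^2+y^2$ (in complex coordinates $u=x+iy$, $\bar u$). So, as in the two-dimensional example, $B(\partial)$ is generated, modulo the center direction, by a single series $F(u\bar u)$ together with the coefficient of $z\zz$. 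The argument then splits into the two directions.

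($\Leftarrow$) Suppose $\partial$ is analytically integrable on $W^c$. Then $W^c$ is an analytic invariant surface, and $\partial|_{W^c}$ has a nondegenerate center with analytic first integral $H = x^2+y^2+\cdots$. The level sets $\{H=\varepsilon\}$ for small $\varepsilon>0$ are closed orbits $C_\varepsilon$ accumulating at $0$, which gives the infinite family.

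($\Rightarrow$) This is the main direction. Suppose $\{C_i\}$ is an infinite family of cycles accumulating at $0$. By hyperbolicity in $z$, every small periodic orbit lies in any (analytic or $C^\infty$) local center manifold, so the $C_i$ lie on a $C^\infty$ representative $W$ of $\widetilde W^c$. We then consider the Poincaré return map $P$ on a transversal in $W$. If $\partial|_{\widetilde W^c}$ were not formally integrable, its formal normal form would have a first nonzero Lyapunov coefficient, $\dot r = a_k r^{2k+1}+\cdots$ with $a_k\neq0$. Then $P(s)-s \sim c\, s^{2k+1}$, with $c\neq 0$, has no small zeros, contradicting the existence of cycles accumulating at $0$. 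Hence $\partial$ is formally integrable on $\widetilde W^c$, and Proposition~\ref{prop:lyapunov} upgrades this to analytic integrability.

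The main obstacle is the step in ($\Rightarrow$) that transports the formal normal form data to the $C^\infty$ return map. It requires that the Taylor jet of $P$ on a smooth center manifold is determined by the formal restriction to $\widetilde W^c$, even though smooth center manifolds are non-unique; this works because they all share the same infinite jet. It also requires a check that the Lyapunov coefficients computed formally equal those of $P$. For the latter I would first try to argue via finite jets: truncating the normalization at order $2k+2$ yields a polynomial conjugacy, and the return map's $(2k+1)$-jet is then computed explicitly in polar coordinates. A minor point to handle as well is the nondegeneracy (collinearity) reading of $V(B(\partial))$ as the center manifold in the integrable case, which follows exactly as in the proof of Proposition~\ref{prop:lyapunov}.
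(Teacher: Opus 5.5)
Your proof is correct in outline, but for ($\Rightarrow$) it takes a different route from the paper.

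The paper never passes through formal integrability. It blows up cylindrically along the analytic one-dimensional stable/unstable curve $\Omega$, so that $\pi^{-1}(0)$ becomes a cycle with an \emph{analytic} Poincar\'e map $P$ on a two-dimensional section, with eigenvalues $1$ and $e^{\lambda}$. The cycles $C_i$ give fixed points of $P$ accumulating at the base point. Since $\mathrm{Fix}(P)$ is an analytic set contained in a $C^k$ center curve $\Gamma_k$ of $P$, it must be an analytic curve. Its saturation is an analytic invariant surface $X$ filled with cycles. Invariance under $\partial_{\mathrm{ss}}$ forces $X$ to be the center manifold, and the planar Dulac theorem gives the analytic first integral on $X$.

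You instead run the classical center--focus argument on a finitely smooth center manifold. A first nonzero Lyapunov coefficient gives $P(s)-s\sim c\,s^{2k+1}$, hence no small cycles, so you conclude formal integrability and hand all the analytic content to Proposition~\ref{prop:lyapunov}.

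Your route is shorter and uses only finite jets and standard center-manifold facts. However, it is only as strong as the convergence statement of Lyapunov's theorem. The paper's route gets analyticity of the center manifold directly from the analyticity of the flow (fixed-point sets of analytic maps are analytic), with no normal-form convergence needed, at the price of the blow-up and the $\partial_{\mathrm{ss}}$-invariance step. Your ($\Leftarrow$) argument via level sets of the first integral is more explicit than the paper, which only asserts that this direction follows from the previous result.

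Some small repairs:
\begin{itemize}
\item $C^\infty$ local center manifolds need not exist in general, so work with a $C^N$ one for $N$ large relative to $2k+1$. It is defined on an $N$-dependent neighbourhood that still contains infinitely many $C_i$, and its $N$-jet is the truncation of $\widetilde W^c$ (the formal center manifold is unique here since $\lambda\in\R^\ast$).
\item State explicitly that a cycle on this surface near $0$ must surround the origin (index argument); this is why it meets your transversal.
\item In ($\Leftarrow$), the given first integral may begin with $(x^2+y^2)^m$ rather than $x^2+y^2$. Its level sets are still closed orbits, so the argument goes through.
\item Your opening paragraph on $V(B(\partial))$ is never used. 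The closing claim that $V(B(\partial))$ is the center manifold in the integrable case should be dropped, since it fails in general. For $-y\partial_x+x\partial_y+\lambda z\partial_z+(x^2+y^2)z\partial_z$, which is in normal form and integrable on $\{z=0\}$, one has $B(\partial)=\langle x^2+y^2\rangle$, whose zero set does not contain $\{z=0\}$.
\end{itemize}
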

\begin{proof}
    We only prove the direct implication since the converse is a direct consequence of the previous result. We note that there are many steps in common with respect to the proof in~\cite{Corral2025}. Suppose that there is an infinite family $\{C_i\}_{i\in I}$ of cycles accumulating at the origin. 
    Since the eigenvalues of the linear part of $\partial$ are $\lambda,\pm i$, there is an analytic one-dimensional stable (unstable) manifold $\Omega$ associated to $\lambda<0$ ($\lambda>0$) and a formal two-dimensional center manifold associated to $i,\ -i$.
    
    We start making a cylindric blow-up centered at the stable manifold. The fiber $\pi^{-1}(0)$ is the only cycle defined by the strict transform $\widetilde \partial$ of $\partial$ in $\pi^{-1}\Omega$. Based on this cycle, we can define the Poincaré map on an analytic transverse section $\Delta$. The Poincaré map $P:\Delta' \subset \Delta \to \Delta$ is an analytic map given by the first intersection of the trajectory starting at a point $p\in \Delta'$ with $\Delta$. 
    We will study the cycles of $\widetilde{\partial}$ near $\pi^{-1}(0)$ by means of studying the periodic points of the Poincaré map, since they are one-to-one related.
    
    The map $P$ has eigenvalues $1, e^{\lambda}$. Therefore, it has a one-dimensional  stable (unstable) manifold associated to the eigenvalue $e^{\lambda}<1$ ($e^{\lambda}>1$), and for each $k\in \mathbb N$ a $C^k$ one-dimensional center manifold $\Gamma_k$ associated to the eigenvalue $1$. 
    By the properties of the center manifolds~\cite{Hirsch1970}, the curve $\Gamma$ contains the periodic points of $P$. On the other hand, the saturation of such curve provides an invariant surface, which means that the cycles are contained in a surface, and consequently, they make a single turn and they must be one-to-one related to the fixed points of $P$. Then, since $\{C_i\}_{i\in I}$ is an infinite family of cycles accumulating to $0$, the Poincaré map has an infinite family of fixed points accumulating to $\pi^{-1}(0)\cap \Delta$. Since $\textrm{Fix}(P)\subseteq \Gamma_k$ is an analytic set, we have that $\textrm{Fix}(P)$ must be a curve, and its saturation provides an analytic surface $X$. 
    
    The surface $X$ is also invariant for its semi-simple part $\partial_{\textrm{ss}}$ of $\partial$. Notice that $\partial_{\textrm{ss}}$ only has two possible invariant manifolds: one curve tangent to the eigenspace of $\lambda$ and a surface tangent to the eigenspaces of $i,-i$. By the two-dimensional Dulac's theorem, we conclude  the analytic integrability of $\partial$ in $X$.
\end{proof}

Suppose that the linear part of $\partial$ is as in Proposition~\ref{prop:lyapunov} but now supposing that $\lambda=0$. Since there is a zero eigenvalue, more intricate Bruno varieties can arise.

\begin{example}
    Let $$\partial = ix\xx - iy\yy + (xy-z^2)(x\xx + y\yy) + (xy-z^2) z\zz $$
    be a vector field written on the logarithmic basis $S= ix\xx - iy\yy, \ T_1=x\xx + y\yy, \ T_2= z\zz$. As in the previous examples, $\partial$ is already in normal form, being $\partial_{\tmop{ss}}= ix\xx - iy\yy$ and $\partial_{\tmop{nilp}}= (xy-z^2)(x\xx + y\yy) + (xy-z^2) z\zz. $
    We have $\partial_{\tmop{ss}}\wedge \partial_{\tmop{nilp}}= (xy - z^2) (S\wedge T_1 + S\wedge T_2), $ thus
    $$  B(\partial) = \big \langle xy - z^2\big \rangle.  $$
    It produces an invariant analytic variety given by $V(B(\partial))=\{ xy - z^2= 0\}, $ which is a surface in $\C^3$. 

    This example has been carefully chosen in order to show the existence of surfaces filled with cycles. In particular, notice that the real vector field $$ \delta= -y\xx + x\yy  +  (x^2+ y^2-z^2)(x\xx + y\yy) + (x^2+ y^2-z^2) z\zz ,$$ 
    is conjugated to $\partial$ by the automorphism $\Phi(x,y,z)=(x+iy, x-iy,z)$. We observe that the real analytic surface $V(B(\delta))= V(\Phi^* (B(\partial)))= \{ x^2+ y^2-z^2=0\}  $ is filled with cycles.
\end{example}

We conclude this section by considering the quasi-periodic behavior of the foliation defined by $\partial$ in restriction to its Bruno ideal.

Let us assume that $\partial = S+R$ is an analytic $S$-perturbation such that $B(\partial)$ is analytic. Considering the quotient algebra $A = \OO / B(\partial)$, it follows from Corollary \ref{cor:coeffsinB} that we can write
$$\partial_A = (1+f_0) S_A,$$ 
for some analytic germ $f_0 \in \Gr_0(\OO,S) \cap \mathfrak{m}$. 

Therefore, following \cite{Bruno1989}, one can further consider the {\em period fibration} on $V(B(\partial))$ defined by $\{f_0=\mathrm{cte}\}$. It is easy to prove that each level set 
\begin{equation}\label{level-sets}
F_a = V(B(\partial)) \cap f_0^{-1}(\{a\})
\end{equation}
is a $\partial$-invariant analytic set. Moreover, in restriction to $F_a$, the local flow of $\partial$ is simply given by
$$
\Phi(t,x_1,\ldots,x_n) = (e^{t(1+a)\lambda_1} x_1,\ldots, e^{t(1+a)\lambda_n} x_n).
$$
Let $\partial=S+R$ be a real analytic derivation such that
    $$S= \omega_1\left(-y_1 \frac{\partial}{\partial x_1} + x_1 \frac{\partial}{\partial y_1}\right)  + \cdots + \omega_n \left( -y_n \frac{\partial}{\partial x_n} + x_n \frac{\partial}{\partial y_n} \right), $$ 
    with $\omega_1,\ldots,\omega_n\in \R^\ast$ and $R$ 1-flat. Suppose further that the vector 
    $$\lambda = (\pm\omega_1,..,\pm \omega_n) \in \R^{2n}$$ satisfies the $\omega$-condition. 
    Then it follows from the Main theorem that $B(\partial)$ is analytic and that $V = V(B(\partial))$ is a (germ of) analytic variety.
    \begin{lemma}
   For each point $
    p\in V$ there exists a constant $a\in \R$ such that the orbit of $\partial$ through $p$ lies in a quasi-periodic torus $\mathbb{T}_p \subset V$ with a set of frequencies in $\{ ((1+a)\omega_1,\ldots,(1+a) \omega_n)\}$.
\end{lemma}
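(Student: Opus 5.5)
We first pass to complex coordinates in which $S$ is diagonal. Set $z_j = x_j + i y_j$ and $w_j = x_j - i y_j$; this is the change of variables used in the last example of this section. In these coordinates
\[
S=\sum_j i\omega_j\left(z_j\frac{\partial}{\partial z_j}-w_j\frac{\partial}{\partial w_j}\right)=L(\lambda),
\qquad \lambda=(i\omega_1,\ldots,i\omega_n,-i\omega_1,\ldots,-i\omega_n).
\]
We assume, as in that example, that $\partial$ is logarithmic in these coordinates, so that the Main Theorem applies with the $\omega$-condition on $\lambda$. It gives an analytic automorphism $\Phi$ such that $\delta=\Phi\partial\Phi^{-1}$ is in normal form modulo $B(\delta)$, and $B(\delta)=\Phi^\ast B(\partial)$ by Lemma~\ref{lemma-pull-back-B}. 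Since $\partial$ is real, we want $\Phi$ to respect the real structure. We would argue this from the uniqueness of the Jordan decomposition: the ideal $B(\partial)$ is intrinsic, so it is stable under complex conjugation (compatibly with the involution $z\leftrightarrow w$) and therefore defines a real analytic set. The statement concerns orbits of $\partial$ in $V$, so it is invariant under conjugation by $\Phi$, and it suffices to prove it for $\delta$ on $V(B(\delta))$.

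By Corollary~\ref{cor:coeffsinB}, on $A=\OO/B(\delta)$ we have $\delta_A=(1+f_0)S_A$ with $f_0\in\Gr_0(\OO,S)\cap\mathfrak m$. This is exactly the period-fibration setting described just before the lemma. We fix $p\in V$ and set $a=f_0(p)$. Because $S(f_0)=0$ and $V$ is $S$-invariant, $f_0$ is constant along orbits of $\delta|_V$. Hence the level set $F_a$ of \eqref{level-sets} is invariant, and on it the flow is the linear flow
\[
\Phi(t,z,w)=\bigl(e^{i t(1+a)\omega_j}z_j,\;e^{-i t(1+a)\omega_j}w_j\bigr).
\]

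Next we show that $a$ is real. On the real locus we have $w_j=\bar z_j$. The function $f_0$ is a series in resonant monomials $z^{\um}w^{\un}$ with $\sum\omega_j(m_j-n_j)=0$. To see that it takes real values there, we use the reality of $\partial$. Complex conjugation $\sigma$ transforms $\delta$ into a real normal form modulo the same ideal, with the same $S$. Uniqueness of $(\delta_{ss})_A=S_A$ and of the decomposition $\delta_A=(1+f_0)S_A$ (as $S_A\neq 0$ as long as $V\not\subset\{S=0\}$) then forces $f_0=\overline{f_0\circ\sigma}$ modulo $B$. At real points this gives $a\in\R$. An alternative route is to note that $S$ restricted to the real locus is a real vector field, so $(1+f_0)$ must be real there for $\delta$ to be real wherever $S\neq0$.

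Finally, in real coordinates $(x_j,y_j)$ the orbit is
\[
t\mapsto\bigl(R_{t(1+a)\omega_j}(x_j(p),y_j(p))\bigr)_j,
\]
where $R_\theta$ denotes rotation by angle $\theta$. Its closure lies in the torus
\[
\mathbb T_p=\prod_j\{x_j^2+y_j^2=x_j(p)^2+y_j(p)^2\}
\]
(dropping the factors with $r_j=0$), and the frequencies are $(1+a)\omega_j$. We then take $\mathbb T_p$ to be the closure of this orbit, which is a subtorus. It is contained in $V\cap F_a$ because that set is closed and invariant under the flow. We also need the real flow to stay defined near the origin, which holds since $|z_j|$ is constant along orbits.

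The main obstacle is the reality issue. One part is guaranteeing that the analytic normalization $\Phi$, or at least $V$ and $f_0$, are compatible with complex conjugation, so that $a$ is real and the orbit is genuinely a rotation rather than a spiral. The other part is checking that $f_0$ restricted to $V$ is well defined; we would handle both via the intrinsic nature of the Jordan decomposition under the antiholomorphic involution.
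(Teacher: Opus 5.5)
Your argument is the same as the paper's: you place $p$ in a level set $F_a$ of $f_0$ inside $V$, where the field coincides with $(1+a)S$, and read off the invariant tori of $S$. The paper simply takes $a\in\R$ and the real rotation picture for granted, so you are right to single out reality as the delicate point. Your choice of $\mathbb{T}_p$ as the orbit closure is also more accurate than the paper's claim that $\mathbb{T}_p$ is $n$-dimensional.

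However, your treatment of reality is circular as written. You only show that $B(\partial)$ is stable under the conjugation $\sigma$. You do not show that the analytic normalization $\Phi$ from the Main Theorem commutes with $\sigma$, yet three steps rely on $\delta$ being real:
\begin{itemize}
\item Your first route to $a\in\R$ (``$\sigma$ transforms $\delta$ into a normal form modulo the same ideal'') needs $\sigma(B(\delta))=B(\delta)$ and $\sigma\delta\sigma\equiv\delta$.
\item Your second route assumes $\delta$ real outright.
\item The final rotation formula treats the point $q=\varphi^{-1}(p)$ in normalized coordinates as real, where $\varphi$ is the coordinate change of $\Phi$.
\end{itemize}
Also, uniqueness of the coefficient of $S_A$ does not follow from $S_A\neq0$. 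It follows from $S$ belonging to a basis of the free module of logarithmic derivations, so that $gS\in B\,\tcD$ forces $g\in B$ (property (M.1)).

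The gap closes intrinsically, as follows.
\begin{enumerate}
\item Since $\sigma\partial\sigma=\partial$, uniqueness of the Jordan decomposition gives $\sigma\partial_{\mathrm{ss}}\sigma=\partial_{\mathrm{ss}}$.
\item By Lemma~\ref{lemma-(1+m)}, $\partial\equiv(1+h)\partial_{\mathrm{ss}}$ modulo $B(\partial)\tcD$. By the freeness argument above, $h$ is unique modulo $B(\partial)$, so $\sigma(h)\equiv h$.
\item Pulling back the normal form shows that $\tilde h=\Phi^{-1}(f_0)$ is an analytic representative of $h$. Hence $\sigma(\tilde h)-\tilde h\in B(\partial)\cap\OO$ vanishes on $V$, and $a=f_0(q)=\tilde h(p)\in\R$ for every real $p\in V$.
\item With $a$ real, the $\delta$-orbit of $q$ is $t\mapsto\bigl(e^{it(1+a)\omega_j}z_j(q),\,e^{-it(1+a)\omega_j}w_j(q)\bigr)_j$. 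It lies on the compact torus $\prod_j\{|z_j|=|z_j(q)|,\ |w_j|=|w_j(q)|\}$ whether or not $q$ is real. So $\mathbb{T}_p$ can be taken to be the image under $\varphi$ of the closure of that orbit.
\end{enumerate}

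Alternatively, $\Phi$ can be chosen real. Take the $T_j$ real and replace $W$ by $\frac12(W+\sigma W\sigma)$ at each step. Since $\sigma$ maps $\Gr_\alpha(\tcD,S)$ to $\Gr_{\bar\alpha}(\tcD,S)$, uniqueness of the solution with $U_k=(U_k)_\ast$ forces $\sigma U_k\sigma=U_k$.
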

\begin{proof}
    Notice that the foliation defined by $S$ fills the space with invariant tori, i.e. each trajectory is a quasi-periodic orbit contained in a tori of dimension ranging from 0 to $n$. In fact, $k-$dimensional tori for $k<n$ are contained in some $2k-$plane $\{ x_{i_1}=y_{i_1}=\ldots= x_{i_{n-k}}=y_{i_{n-k}} =0  \}$ for $1\leq i_1< \cdots < i_{n-k}\leq n$.
 Each point $p \in V$ lies in some level set $F_a$ defined by \eqref{level-sets} and we have $\partial= (1+a)S$ in restriction to $F_a$. Therefore, $\mathbb{T}_p\subset L$ is a $n-$dimensional quasi-periodic torus with frequencies $(1+a)\omega_1,\ldots, (1+a)\omega_n$. 
\end{proof}

\section*{Acknowledgement}
The second author is partially supported by the ANR Project NonSper ANR-23-CE40-0028. The first author is partially supported by the ANR Project DiffeRS ANR-25-ERCC-0003-01. 
The first author has also been partially supported by the mobility program ``Ayudas para estancias breves en el desarrollo de tesis doctorales. Convocatoria 2025" of the University of Valladolid and by the project PID2022-139631NB-I00 funded by the Agencia Estatal de Investigación - Ministerio de Ciencia e Innovación during the realization of this work.

\end{document}